\newtheorem{theorem}{Theorem}
\newtheorem{lemma}{Lemma}
\newtheorem{remark}{Remark}
\newtheorem{question}{Question}
\newcommand{\hkr}{{H}}
\newcommand{\E}{{\mathbb{E}}}
\newcommand{\vol}{{\rm vol}}
\newcommand{\dist}{{\rm dist}}
\newcommand{\cut}{{\rm cut}}
\newcommand{\disc}{{\rm disc}}
\newcommand{\supp}{{\rm supp}}
\newcommand{\sign}{{\rm sign}}
\newcommand{\F}{{\mathcal F}}
\newcommand{\ignore}[1]{}
\begin{document}
\title{From quasirandom graphs to graph limits and graphlets }
 
\author{Fan Chung\thanks{Research supported in part by ONR MURI N000140810747, and AFSOR AF/SUB 552082.}\\
Department of Mathematics\\
University of California, San Diego\\
La Jolla, CA 92093\\
fan@ucsd.edu
}
\date{\empty}
\maketitle

\begin{abstract}
We generalize the notion of quasirandomness which concerns a class of equivalent properties that random graphs satisfy.
We  show that the convergence of a graph sequence
under the  spectral distance is equivalent to the convergence using the (normalized) cut distance.  The resulting graph limit is called graphlets.
We then consider  several   families of graphlets and, in particular,  we characterize quasirandom graphlets with low ranks for both dense and sparse graphs. For example, we show that a graph sequence $G_n$,  for $n = 1, 2,  \ldots $, converges to a graphlets of rank $2$, (i.e.,all normalized
eigenvalues $G_n$ converge to $0$ except for two  eigenvalues converging to $1$ and $\rho >0$)
 if and only if the graphlets is the union of $2$ quasirandom graphlets.

 \end{abstract}
\setlength{\parskip}{8pt}
\section{Introduction}\label{sec:1}
 The study of graph limits originated from quasi-randomness of
graphs which concerns  large equivalent families of graph properties that random graphs satisfy.
Lov\'asz and S\'os \cite{lsos}  first considered a generalized notion of  quasi-randomness as the limits of graph sequences.
Since then, there have been a great deal of 
 developments \cite{aldous, bs, bsr, borgs, borgs1, dj, dhj, elek, elek1, ha, hn,lo, lsos, lsz,lsz1, lsz2, lsz3, lsz4,sz} on the topic of graph limits. There are  two very distinct approaches.
The study of graph limits for dense graphs is entirely different from that for sparse graphs. By dense graphs, we mean  graphs on $n$ vertices with $c n^2$ edges for some constant $c$.  For a graph sequence of dense graphs, the graph limit is formed by  taking the limit of the adjacency matrices with entries  of each matrix 
 associated with squares of equal sizes which partition $[0,1]\times[0,1]$ (see \cite{lsz, lsz1}). Along this line of approach, the graph limit of a sparse graph sequence converges to
  zero.  Consequently,  very different approaches were developed for graph limits of very sparse graphs, mostly with vertex  degrees bounded above
  by a constant independent of the size of the graph \cite{ bs, br,  elek}. 
  
\ignore{  Although graphs are very different from Riemannian manifolds, there are apparent analogies  based on the fact that  a manifold can be viewed as the  limit of some increasing refined discretizations. Various types of discretizations can often be formulated as sparse graphs. For example, a graph sequence of 
  $n$-paths  converges to a  line which can be represented as an interval $[0,1]$ and this is a building block for a Riemannian manifold. Although we will have some examples
  of which the graph limits are indeed Riemannian manifolds, the graph limits we focus on here are not necessarily
  manifolds.  }
  
   To distinguish from earlier definitions for graph limits (called, graphons, graphines, etc.), we will call the graph limits in this paper  by the name of 
  {\it graphlets} to emphasize the spectral connection. In the subsequent sections, we will give a detailed definition for  graphlets as the graph limits of  given graph sequences.
  Although the terminology is sometimes similar to that in differential geometry, the definitions are along the line of spectral graph theory \cite{ch0}
  and mostly discrete.  In addition, the orthogonal basis of the graphlets of a graph sequence can be used, with additional scaling parameters,
  to provide a universal basis for all graphs in the domain (or the union of domains) that we consider. In this regard, graphlets 
  play a similar role as the wavelets do for affine spaces.

To study the convergence of a graph sequence, various different metrics come into play for comparing two graphs.   For two given graphs, there are many different ways to define some notion of  distance between them. Usually the  labeling map assigns consecutive integers to the vertices of a graph   which can then  be associated with  equal
intervals which  partition  $[0,1]$. As opposed to  the definitions in previous work, we will not use the
usual measure or metric on the interval $[0,1]$. Instead, our measure on $[0,1]$ will be determined by the graph sequences that we consider.
\ignore{With the labeling maps, we can proceed to compare two graphs  by considering the adjacency matrices (as well as the Laplacian) as operators acting on functions
defined on $[0,1]$. It should be noted that the metric of the graph limit $\Omega$ depends on the graph sequence and can be quite different from the usual Hausdorff metric
on $[0,1]$.  This will be discussed further in the Section \ref{geo} concerning the gradient and the geometry of the graphlets.}
Before we proceed to examine the distance between two graphs,
we remark that there is a great deal of work on  distances between manifolds \cite{bbg,g} via isometric embeddings. Although the details are obviously
different, there are similarities in the efforts for identifying the global structures of the objects of interest. We are using elements of $[0,1]$ as labels for the
(blow-up) vertices, similar to the  exchangeable probabilistic measures that were used in \cite{aldous, dj, dhj, defi, defi1,  hoover}.

Several metrics for defining distances between two graphs originated from  the quasi-random class of graphs \cite{ CG(d),CGW89}. One such example involves the {\it subgraph counts},  concerning  the number of induced (or not necessarily induced) subgraphs of $G$ that are isomorphic to a specified graph $F$. Another such metric is called
the {\it cut metric} which came from discrepancy inequalities for graphs.  The usual discrepancy
inequalities in a graph $G$ concern  approximating  the number of edges between two given subsets of vertices by the expected values  as in a random graph
 and therefore such discrepancy inequalities can be regarded as estimates for  the distance of a graph to a random graph.  For dense graphs, the equivalence of convergence under the subgraph-count metric and the cut metric among others are  well understood (see \cite{borgs, lsz}). 
The methods for dealing with dense graph limits have not been effective so far for dealing with sparse graphs.  A different separate set of metrics has been developed \cite{br,elek} using  local structures in the neighborhood of each vertex.
Instead of subgraph counts, the associated metric concerns counting trees  and local structures in the ``balls'' around each vertex. The problems of graph limits for sparse graphs are inherently harder as shown in \cite{br}. Nevertheless, most real world complex networks are sparse graphs and the study of  graph limits for sparse graphs can be useful for understanding the dynamics  of large information networks.

The paper is organized as follows: In Section \ref{def}, we first examine the convergence of degree distributions of graphs and  we consider
the convergence the discrete Laplace operators   under the  spectral norm. Then we give the definition for  graphlets in
Section \ref{lets}. 
\ignore{ In Section \ref{geo}, we proceed to examine the geometry of graphlets by considering the gradient operator and the heat kernel of a graph.}  In Section \ref{ex},
we give several families of examples, including  dense graphlets, 
\ignore{graphlets of paths, cycles, cartesian products of paths/cycles,}
 quasi-random graphlets, bipartite quasi-random
graphlets and graphlets of bounded rank. 
In Section \ref{2norms}, we consider the discrepancy distance between two graphs which can be viewed as a normalization of  the cut distance. Then we  prove the equivalence of the  spectral distance and the normalized cut distance
for both dense and sparse graphs. Note that our definition of the discrepancy distance  is  different from the cut distance as used in \cite{br} where a negative result about a similar equivalence was given.  In Sections \ref{quasi} and \ref{bquasi}, we further examine quasi-random graphlets and bipartite quasi-random graphlets for graph sequences with general degree distributions.
In Sections \ref{r2} and \ref{rk}, we  give a number of
equivalent properties for certain graphlets of rank $2$ and for general $k$.  
 In Section \ref{remarks}, we briefly discuss connections between the discrete
and continuous, further applications in finding communities in large graphs and possible future work that this paper might lead to.

 We remark that the work here is different from the spectral approach of graph limits  which focuses on the spectrum of the limit of
 the adjacency matrices in \cite{sz}. If the graph limit is derived from a graph sequence which consists  of dense and  almost regular graphs, the two spectra are essentially the same (differ  only by a scaling factor).
 However, a subgraph of a regular graph is not necessarily regular. All theorems in this paper hold for general graph sequences for both dense graphs
 and sparse graphs. Some of the methods here can be generalized to weighted directed graphs which will   not be discussed in this paper. 
 
\section{The spectral norm and spectral distance}
\label{def}
For a weighted graph $G=(V,E)$ with vertex set $V$ and edge set $E$, we denote the adjacency matrix by $A_G$ with rows and columns indexed by vertices in $V$. For an edge $\{u,v\} \in E$, the edge weight is denoted by $A_G(u,v)$.   
For a vertex $v$ in $V(G)$, the degree of $v$ is $d_G(v)=\sum_u A_G(u,v)$. We let $D_G$ denote the diagonal matrix with $D_G(v,v)=d_G(v)$.  Here we consider   graphs without isolated vertices. Therefore, we have $d_G(v) > 0$ for every $v$ and $D_G^{-1}$ is well defined. 

\ignore{
We consider a family of operators ${\mathcal W}$ consisting of  $W : [0,1] \times [0,1] \rightarrow [0,1] $ satisfying ,  $W(x,y)=W(y,x)$.
\ignore{
(ii) for each $x \in [0,1]$, the neighborhood $\Gamma_W(x)$ of $x$ in $W$ is defined by 
\[ \Gamma_W(x)=\{y : W(x,y) \not = 0\}.\]
The degree of $x$ in $W$, denoted by $d_W(x)$  satisfies
 \[
d_W(x) = \int_{\Gamma_W(x)} W(x,y) dy_x \not = 0. \]
} In this paper, we mainly concern operators $\mathcal W$ that are
{\it exchangeable} (see \cite{aldous, dj, dhj, defi, defi1,  hoover}). Namely, for any measure-preserving bijection $\tau: [0,1] \rightarrow [0,1],$ $W$ is said to be equivalent to a rearrangement  of $W$, denoted by $W_{\tau}$, which acts on
functions  defined on $[0,1]$  satisfying  $Wf(x) =W_\tau f (\tau(x))$ for $f : [0,1] \rightarrow {\mathbb R}$.
We 
write $W \sim W_{\tau}$. 
}

We consider   the family of operators $\mathcal W$ consisting of $W : [0,1] \times [0,1] \rightarrow [0,1] $ satisfying ,  $W(x,y)=W(y,x)$.
 $W$ is said to be
 of finite type if there is a finite partition $ (S_1,...,S_n) $ of $[0,1]$ such that $W$ is constant on each set $S_i \times S_j$ . 
 Given a graph $G_n$ on $ n$ vertices,  a special 	 finite-type  associated with $ G_n$ is defined by partitioning $[0,1]$ into $n$ intervals of length $1/n$ and, for a map $\eta : [0,1] \rightarrow V $,  the pre-image of each vertex $v$ corresponds to a interval $I_v = (j/n, (j+1)/n]$ for some $j$. We  can define $W_{G_n} \in \mathcal W$ by setting :
\begin{eqnarray}
\label{ww}
W_{G_n}(x,y)=A_{G_n}(u,v)\end{eqnarray} if $
 x \in I_{u}, $ and $y \in I_{v}$.

Suppose we have a sequence of graphs, $G_n,$ for $ n = 1, 2, \ldots$. Our goal is to describe the limit of a graph sequence provided it converges.
One typical way,  as seen in \cite{lsz}, is to take the limit of $W_{G_n}$ under the cut norm. For example, if $G_n$ is in the family of random graphs with edge density $1/2$, the limit of $W_{G_n}$  has all entries $1/2$. However, if we consider sparse graphs such as cycles, then the limit of $W_{G_n}$ converges  to  
the $0$ function.

 Instead, we will define the graph limit to be associated with a measure space $\Omega$  as the limit of measure spaces defined on 
 $G_n$ and the measure $\mu$ for $\Omega$
is the limit of the measures $\mu_n$
associated with $G_n$. Before we give the detailed definitions of $\Omega$ and $\mu$, there are a number of technical issues in need of clarification. The following remarks can be regarded as a companion for the definitions to be given in Sections 2.1 to 2.3 so that  possible misinterpretations could be avoided.

\begin{remark} 
{\rm
We label elements of $\Omega$  by $[0,1]$. However, the geometric structure of $\Omega$ can be quite different
from the interval $[0,1]$.
In general, $\Omega$ can be some  complicated compact space. For example, if the $G_n$ are square grids
(as cartesian products of two paths), then a natural choice  for $\Omega$ is a unit square. 
We will write $V(\Omega)=[0,1]$ to denote the set of ``labels''  for $\Omega$ while $\Omega$ can have  natural descriptions other than $[0,1]$.
}
\end{remark}
\begin{remark}
{\rm
In this paper, we mainly concern operators $ W$ that are 
{\it exchangeable} (see \cite{aldous, dj, dhj, defi, defi1,  hoover}). Namely, for a Lebesgue measure-preserving bijection $\tau: [0,1] \rightarrow [0,1],$ 
 a 
rearrangement of $W$, denoted by $W_{\tau}$,  acts on
functions $f$  defined on $[0,1]$  satisfying  
\begin{eqnarray}
\label{ex}
Wf(x) =W_\tau f (\tau(x)). 
\end{eqnarray}
We say $W$ is equivalent to $W_{\tau}$ and we
write $W \sim W_{\tau}$.  By an  exchangeable operator $W$, we mean the  equivalence class of  operators $W_{\tau}$ where $\tau$ ranges over all
measure-preserving bijections on $[0,1]$.
}
\end{remark}
\begin{remark}{\rm
We consider a family of exchangeable self-adjoint operators ${\mathcal W}^*$ which act on the space of functions $f : [0,1] \rightarrow {\mathbb R}$.
 Clearly, any exchangeable  $W : [0,1] \times [0,1] \rightarrow [0,1] $ with  $W(x,y)=W(y,x)$  is contained in ${\mathcal W}^*$.  The disadvantage of  using  such $W$ is
 the implicit requirement that $W(x,y)$ is supposed to be given as a specified value. For   some graph sequences $G_n$ which converge to a finite graph, it is quite
 straightforward to define the associated $W_n$ as in (\ref{ww}). However, in general,  it is quite possible that $W_n(x,y)$ as a function of $n$  approach $0$ as $n$ goes to 
 infinity. In such cases, it is better to treat  the limit as an operator. }
 \ignore{For example, for the graph sequence of paths $P_n$ on $n$ vertices, the associated
 measure $\mu_n$, as defined in (\ref{mu_n}),
 converge to the Lebesgue measure on $[0,1]$ and the operator $\Delta_n$, as defined in (\ref{Del}), converge to the the classical Laplace-Beltrami operator $\Delta$ (which
 is hard to be expressed in $\mathcal W$).}
\end{remark}
\begin{remark} 
{\rm
 Throughout the paper, 
$ \int F(y)dy$ denotes the usual integration of a function $F$ subject to the Lebesgue measure $\nu$.
We will impose the condition that the space of functions that we focus on are Lebesgue measurable and integrable so that all the inner products
involving integration  make sense. For some other measures, such as  $\mu_n$ and $\mu$ for a graph sequence, as defined in Section \ref{lap} and \ref{subdeg},
it can be easily checked that if a function $F$ is Lebesgue measurable and integrable then $F$ is also measurable and integrable subject to
$\mu_n$ and $\mu$.
}
\end{remark}
\subsection{
 The Laplace operator on a graph}\label{lap}
 For a weighted graph $G_n$ on $n$ vertices with edge weight $A_n(u,v)$ for vertices $u$ and $v$ , we define the Laplace operator $\Delta_n$ to be
 \begin{eqnarray}\label{Del}
  \Delta_n f (u)& =& \frac{1}{d_u}\sum_{v } (f(u)-f(v))A_n(u,v).\end{eqnarray}
  for $f : V \rightarrow {\mathbb R}$.
 It is easy to check  that
 \begin{eqnarray*}
 \Delta_n &=&I_n- D_n^{-1}A_n =D_n^{-1/2}{\mathcal L}_nD_n^{1/2}
 \end{eqnarray*}
 where  $I_n$ is the $n \times n$ identity matrix and ${\mathcal L}_n= I_n-D_n^{-1/2}A_n D_n^{-1/2}$ is the symmetric normalized Laplacian (see \cite{ch0}).

Let $\mu_n$ denote the measure defined by $\mu_n(v)=d_v/\vol(G)$ for $v$ in $G_n$ where $\vol(G_n)=\sum_v d_v$.   We define an inner product on functions $f, g: V \rightarrow {\mathbb R}$ by
 \[ \langle f, g \rangle_{\mu_n} = \sum_{v \in V} f(v)g(v) \mu_n(v). \]
 It is then straightforward to check that
 \begin{eqnarray*}
 \sum_{\{u,v\} \in E} \frac{ (f(u)-f(v))(g(u)-g(v) )A_n(u,v)}{\vol(G_n)}&=&
 \frac{\sum_u f(u) \sum_{v \sim u} (g(u)-g(v))A_n(u,v)}{\vol(G_n)}\\
 &=& \sum_u f(u) (\Delta_n g)(u)\frac{ d_u}{\vol(G_n)}\\
 &=& \langle f, \Delta_n g \rangle_{\mu_n}
 \end{eqnarray*}
 and 
 \[\langle f, \Delta_n g \rangle_{\mu_n}=\langle g, \Delta_n f \rangle_{\mu_n}. \]
 If $f$ and $g$ are complex-valued functions, then we have
 \[\langle f, \Delta_n g \rangle_{\mu_n}=\overline{\langle g, \Delta_n f \rangle_{\mu_n}} \]
 where $\bar x $ denotes the complex conjugate of $x$.

We note that 
$\langle f, \Delta_n {\mathbf 1} \rangle_{\mu_n}=\langle {\mathbf 1}, \Delta_n f \rangle_{\mu_n}=0$, where ${\mathbf 1}$ denotes  the constant function $1$.
Therefore,
$\Delta_n$ has an eigenvalue $0$ with an associated eigenfunction $\mathbf 1$, under the $\mu_n$-norm.  
The eigenfunctions $\phi_j$, for $j=0, \ldots, n-1$, form an orthogonal basis under the  $\mu_n$-norm for $G_n$. 
In other words, $D_n^{1/2}\phi_j$ form an orthogonal basis under the usual inner product as eigenvectors for the normalized Laplacian $I_n-D_n^{-1/2} A_n D_n^{-1/2}$. The $\phi_j$'s are previously called the
combinatorial eigenfunctions in \cite{ch0}.

.
\subsection{ The convergence of degree distributions}
\label{subdeg}
  Suppose we have a sequence of graphs.   For a graph $G_n$ on $n$ vertices,  the measure $\mu_n$, defined by  $ \mu_n(v) =\frac{d_{n}(v)}{\vol(G_n)}$, is also called
  the degree distribution of $G_n$ where
 $d_n(v)$ denotes the degree of $v$ in $G_n$ and $\vol(G_n)= \sum_v d_n(v).$ In general, for a subset $X$ of vertices in $G_n$,
$\vol_{G_n}(X) = \sum_{v \in X} d_n(v)$. 
In this paper, we focus on graph sequences with convergent degree distributions which we will describe.

For a graph $G_n$ with vertex set $V_n$ consisting of  $n$ vertices,  we let  $F_n$ 
denote the set of all bijections from $V_n  $ to $\{1,2, \ldots, n\}$. 
\begin{eqnarray}
\label{fn}
 F_n = \{ \eta : V_n \rightarrow \{1,2, \ldots, n\} \}. 
 \end{eqnarray}
For
each $\eta \in F_n$,  we let $\eta_n$  denote the associated partition map $\eta_n : [0,1] \rightarrow  V_n  $, defined by
 $\eta_n(x)=\eta(u)$
 if $x \in ((\eta(u)-1)/n, \eta(u)/n] =I_{\eta(u)}$.  We  write $I_{\eta(u)}=I_u$ if there is no confusion. In stead of  $F_n$, it is sometimes  convenient to
 consider
\begin{eqnarray}
\label{fn1}
{\mathcal F}_n=\{\eta_n : \eta \in F_n\} \subset \{ \varphi: [0,1] \rightarrow V_n\}
\end{eqnarray}
 Now, for any integrable functions $f,g : [0,1] \rightarrow {\mathbb R}$,  we  define 
 \begin{eqnarray} \label{mu_n}
 \langle f,  g\rangle_{\mu_n,\eta_n} = \int_0^1 f(x)g(x) \mu_n^{(\eta_n)}(x)
 \end{eqnarray}
 where  $\mu_n$ is defined by
   \begin{eqnarray}
\label{eta}
\int_0^1F(x) \mu_n^{(\eta_n)}(x) = \int_0^1F(x) n \mu_n(\eta_n( x) )dx
 \end{eqnarray} for integrable $F : [0,1] \rightarrow {\mathbb R}$.  
 We can then define the   associated norm:
 \begin{eqnarray} \label{mu_norm}
\|f\|_{\mu_n, \eta_n} &=& \sqrt{ \langle f,  f\rangle_{\mu_n,\eta_n} }
 \end{eqnarray}
 
As a measure on $[0,1]$,  $\mu^{(\eta_n)}_n$ satisfies
\begin{align}
\label{muu} \mu_n(u) = \int_{I_u} \mu^{(\eta_n)}_n(x) 
\end{align}
and 
\[ \int_0^1 \mu^{(\eta_n)}_n(x)= 1. \] 
For example, for a graph  $G_5$ with degree sequence $(2,2,3,3,4)$, and suppose the corresponding vertices are denoted by $v_1, \ldots, v_5$, then  $\mu_n(v_1)=\mu_n(v_2)= 1/7$ and $\mu_n(v_3)=\mu_n(v_4)=3/14$, etc.

 In particular, for a subset $S \subseteq [0,1]$, we consider the characteristic function $\chi_S(x)=1$ if $x \in S$ and $0$ otherwise.
 Then 
 for $f=g=\chi_S$, we have
 \begin{eqnarray}
 \langle \chi_S,  \chi_S\rangle_{\mu_n,\eta_n} &=& \mu_n^{(\eta_n)}(S)\nonumber\\
 &=& \int_S \mu_n^{(\eta_n)}(x). 
 \end{eqnarray}
Sometimes we suppress the labeling map $\eta_n$ and simply write $\mu_n$ as the associated measure on $[0,1]$ if there is no confusion.

For $\epsilon > 0$, we say 
two graphs $G_m$ and $G_n$ have $\epsilon$-similar degree distributions if
 \begin{eqnarray}
 \label{deg}
 \inf_{\theta \in \F_m, \eta \in \F_n } \int_0^1 |\mu_m^{(\theta)}(x) - \mu_n^{(\eta)}(x)|  < \epsilon.
 \end{eqnarray}

For a graph sequence $G_n, n=1,2, \dots$, we say the degree distribution $\mu_n$ is  Cauchy,  if for any $\epsilon>0$, there 
 exists $N=N(\epsilon)$ such that for any $m,n \geq N$, the degree distributions  of $G_m$ and $G_n$ are $\epsilon$-similar. 
 To see that the degree distributions
 converge, we use the following
 arguments:
\begin{lemma}
\label{muconv}
If the degree distribution of  the sequence $G_n$ is  Cauchy, then there are $\theta_n \in \F_n$ such that 
the sequence $\mu_n^{(\theta_n)}$ of $G_n$  converges to a limit, denoted by $\mu$. Furthermore $\mu$ is unique
up to a measure preserving map.
\end{lemma}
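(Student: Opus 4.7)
The plan is to exhibit the labelings $\theta_n$ explicitly as those that sort vertices by degree, verify that the associated measure densities are Cauchy in $L^1([0,1])$ by appealing to the classical rearrangement inequality, and then invoke completeness of $L^1$.

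Concretely, for each $n$ choose a bijection $V_n \to \{1, \ldots, n\}$ that orders vertices by non-increasing degree, and let $\theta_n \in \F_n$ be the induced partition map. With this choice, $\mu_n^{(\theta_n)}$ is a non-increasing step function on $[0,1]$, constant on each interval of length $1/n$, and it coincides with the non-increasing rearrangement $(\mu_n^{(\eta)})^*$ for any other $\eta \in \F_n$. Different labelings $\eta, \eta' \in \F_n$ produce densities that differ by a Lebesgue measure-preserving bijection of $[0,1]$, namely the piecewise translation that permutes the intervals $I_u$.

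The key analytic ingredient is the classical rearrangement identity
\[ \|f^* - g^*\|_1 \;=\; \inf_{\tau, \sigma} \|f\circ\tau - g\circ\sigma\|_1, \]
where $f,g$ are nonnegative integrable functions on $[0,1]$ and $\tau, \sigma$ range over Lebesgue measure-preserving bijections of $[0,1]$. Applied to $\mu_m^{(\theta)}$ and $\mu_n^{(\eta)}$, this shows the infimum appearing in (\ref{deg}) is equal to $\|\mu_m^{(\theta_m)} - \mu_n^{(\theta_n)}\|_1$. The Cauchy hypothesis on the degree distributions therefore gives $\|\mu_m^{(\theta_m)} - \mu_n^{(\theta_n)}\|_1 < \epsilon$ for all $m,n \geq N(\epsilon)$, so by completeness of $L^1([0,1])$ the sequence $\mu_n^{(\theta_n)}$ converges to some $\mu \in L^1([0,1])$, which inherits nonnegativity and total mass $1$ from the prelimit.

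For uniqueness up to a measure-preserving map, suppose $\mu_n^{(\theta_n')}$ is any other convergent subsequence with limit $\mu'$. The prelimit functions $\mu_n^{(\theta_n)}$ and $\mu_n^{(\theta_n')}$ differ by a measure-preserving bijection of $[0,1]$ (the permutation of intervals), so they share a common non-increasing rearrangement; passing to the $L^1$ limit, $\mu^* = (\mu')^*$, so $\mu$ and $\mu'$ are equimeasurable, and the standard fact that equimeasurable $L^1$ functions on $([0,1],\nu)$ are related by a Lebesgue measure-preserving map finishes the argument. The main obstacle is the clean identification of the infimum in (\ref{deg}) with the $L^1$ distance between the sorted densities: for step functions with possible degree plateaus this requires a short exchange argument showing that any out-of-order pairing of intervals $I_u, I_v$ can be swapped without increasing $\|\mu_m^{(\theta)} - \mu_n^{(\eta)}\|_1$, so a minimizer is attained by the co-monotone pairing given by sorting.
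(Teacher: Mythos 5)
Your proof is correct, but it follows a genuinely different route from the one the paper gives for Lemma \ref{muconv}. The paper's own argument is a dyadic chaining construction: it sets $\epsilon_j=2^{-j}$, inductively chooses labelings $\theta_{M(j)}$ so that consecutive milestone graphs $G_{M(j)}, G_{M(j+1)}$ are $\epsilon_j$-close, assigns each intermediate $n\in[M(j),M(j+1))$ a labeling close to $\theta_{M(j)}$, shows the full sequence is Cauchy by telescoping ($\le 3\epsilon_j$), and then defines $\mu(S)=\lim_n \mu_n^{(\theta_n)}(S)$ setwise. Your route --- sort the vertices by degree, use the rearrangement inequality to identify the infimum in (\ref{deg}) with the $L^1$ distance between the sorted densities, and invoke completeness of $L^1$ --- is in fact recorded in the paper as an alternative proof in the remark immediately following the lemma, attributed to Stephen Young, where it is described as simpler but as producing a limit that is ``not necessarily exchangeable'': it privileges the monotone representative rather than the equivalence class. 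Your uniqueness paragraph (equimeasurability of limits plus the standard fact relating equimeasurable functions by a measure-preserving map) essentially repairs that objection. What each approach buys: yours is shorter, gives a canonical representative, and makes the minimizing labeling explicit; the paper's chaining argument avoids any rearrangement/sorting structure and is exactly the template reused for Theorem \ref{lapconv}, where the objects being compared are operators $\Delta_n^{(\theta_n)}$ and no monotone ordering is available. One small point to tighten in your write-up: the infimum in (\ref{deg}) ranges only over the discrete labelings in $\F_m\times\F_n$, not over all measure-preserving bijections of $[0,1]$, so the clean statement you need is that the co-monotone pair of labelings attains the discrete infimum and that this value equals $\|\mu_m^*-\mu_n^*\|_1$; your closing exchange argument on the common refinement of the two grids does supply this, so the gap is only expository.
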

\proof
\ignore{
For each $n$, suppose we choose $\eta_n$ such that $\mu_n^{(\eta_n)}$ is a non-decreasing function on $[0,1]$.
By using  the fact that for $x_1 < x_2$ and $y_1<y_2$, we have $|x_1-y_1|+|x_2-y_2| \leq |x_1-y_2|+|x_2-y_1|$,
it follows that
 \begin{eqnarray*}
 \inf_{\theta \in F_m, \eta \in F_n } \int_0^1 |\mu_m^{(\theta)}(x) - \mu_n^{(\eta)}(x)| =
 \int_0^1 |\mu_m^{(\eta_m)}(x) - \mu_n^{(\eta_n)}(x)| 
 \end{eqnarray*} 
Thus the sequence $\mu_n^{(\eta_n)}$ is Cauchy and therefore  converges to  a limit $\mu$. }

For each positive integer $j$, we set  $\epsilon_j=2^{-j}$, and  let $ N(\epsilon_j)$ denote the least integer such that for $m,n \geq N(\epsilon_j)$, 
$G_m$ and $G_n$ have $\epsilon_j$-similar degree distributions.  To simplify the notation, we write $M(j)=N(\epsilon_j)$.

We first choose an arbitrary permutation $\eta_{M(1)}$ and then   by induction define  permutations $\eta_{M(j)}$'s, for $j>1$ using (\ref{deg})  so that 
\[ \int_0^1 |\mu_{M(j)}^{(\theta_{M(j)})}(x) - \mu_{M(j+1)}^{(\theta_{M(j+1)})}(x)| < \epsilon_j.
\]

For  each $n \in [M(j), M(j+1))$, we choose the  permutation $\eta_n$ such that
\[ \int_0^1 |\mu_{n}^{(\theta_{n})}(x) - \mu_{M(j)}^{(\theta_{M(j)})}(x)| < \epsilon_j. 
\]

\noindent
{\it Claim:} the sequence of  $\mu_n^{(\theta_n)}$, for $n =1, 2, \ldots$  is Cauchy.
\\
To prove the claim, we see that for any $m ,n  \geq M(j) $ satisfying $n \in [M(j), M(j+1)) $ and $m \in [M(k), M(k+1))$ with $j \leq k$, we have
\begin{eqnarray*}
&&
\int_0^1 \mid \mu_{n}^{(\theta_{n})}(x) - \mu_{m}^{(\theta_{m})}(x)\mid \\
&\leq& \int_0^1 \mid \mu_{n}^{(\theta_{n})}(x) - \mu_{M(j)}^{(\theta_{M(j)})}(x)\mid +\int_0^1 \mid \mu_{M(j)}^{(\theta_{M(j)})}(x) - \mu_{M(j+1)}^{(\theta_{M(j+1)})}(x)\mid + \ldots \\
&&+\int_0^1 \mid \mu_{M(k-1)}^{(\theta_{M(k-1)})}(x) - \mu_{M(k)}^{(\theta_{M(k)})}(x)\mid +\int_0^1 \mid \mu_{M(k)}^{(\theta_{M(k+1)})}(x) - \mu_{m}^{(\theta_{m})}(x)\mid \\
&\leq& 2\epsilon_j + \epsilon_{j+1} + \ldots + \epsilon_{k-1} + 2 \epsilon_k\\
&=& 3 \epsilon_j
 \end{eqnarray*}
and the Claim is proved.

To show that the sequence
  $ \mu_{n}^{(\theta_{n})} $ converges, we define $\mu(S)$ for any measurable subset $S \subseteq [0,1]$ as follows:
    \begin{eqnarray*}
  \mu(S) &=& \lim_{n \rightarrow \infty}\mu_{n}^{(\theta_{n})}(S)\\
  &=& \lim_{n \rightarrow \infty} \int_S \mu_{n}^{(\theta_{n})}(x).
    \end{eqnarray*}
  Since $\mu_{n}^{(\theta_{n})}$ is Cauchy, the above limit exists and $\mu(S)$ is well defined. Furthermore,  for any measure preserving map $\tau$,
  $\mu \circ \tau$ is  the limit of $\mu_{n}^{(\theta_{n}\circ \tau)} $. Thus, $\mu$ is unique up to a measure preserving map.

  \ignore{ are all bounded functions defined on $[0,1]$ and the sequence of functions $ \mu_{n}^{(\theta_{n})} $ is Cauchy,  they  converge to a  limit, denoted by $\mu$ on $[0,1]$. For different choice of  $\eta_{M(1)}$ we may arrive at different measure which, however, is equivalent to $\mu$
up to a measure preserving map.
}
 To see  that  $\mu$ is a probabilistic measure, we note that  for any $\epsilon > 0$, there is some $n$ such that
\begin{eqnarray*}
 \mid \int_0^1 \mu(x) -1\mid &= &\mid \int_0^1 \mu(x)-\int_0^1 \mu_{n}^{(\eta_n)} (x) \mid\\
 &= & \int_0^1\mid \mu(x)- \mu_n^{(\eta_n)}\mid\\
&\leq & \epsilon.
\end{eqnarray*}
 Lemma \ref{muconv} is proved.
\qed

\begin{remark}{\rm
Since we are dealing with exchangeable operators, the measures 
 $\mu$ can be regarded as the equivalence class of   probabilistic measures
where two measures $\varphi, \varphi'$ are said to be equivalent if there is a Lebesgue measure preserving bijection  $\tau$ on $ [0,1]$ such that
$\varphi =\varphi' \circ \tau$.
}
\end{remark}
\begin{remark}{\rm
An alternative proof for the convergence $\mu_n$ is due to Stephen Young \cite{young} which is simpler but the resulted limit $\mu$ is not
necessarily exchangeable.
For each $n$, suppose we choose $\eta_n$ such that $\mu_n^{(\eta_n)}$ is a non-decreasing function on $[0,1]$.
By using  the fact that for $x_1 < x_2$ and $y_1<y_2$, we have $|x_1-y_1|+|x_2-y_2| \leq |x_1-y_2|+|x_2-y_1|$,
it follows that
 \begin{eqnarray*}
 \inf_{\theta \in \F_m, \eta \in \F_n } \int_0^1 |\mu_m^{(\theta)}(x) - \mu_n^{(\eta)}(x)| =
 \int_0^1 |\mu_m^{(\eta_m)}(x) - \mu_n^{(\eta_n)}(x)| 
 \end{eqnarray*} 
Thus the sequence $\mu_n^{(\eta_n)}$ is Cauchy and therefore  converges to  a limit $\mu$. }
\end{remark}

 We note that two different graphs $G$ and $H$ both on $n$ vertices can have the same degree distribution  measure $\mu_n$  but $G$ and $H$ have
 different degree sequences. For example, $G$ is a $k$-regular graph and $H$ is a $k'$-regular graph where $k \not = k'$. In this case, $\mu_n(v)=1/n$
 for any vertex $v$ and $\mu_n(x)=1$ for  any $x \in [0,1]$.  To  define the convergence of graph sequences, we need to take into account the volume $\vol(G)=\sum_{v} d_v$ of $G$.

 \subsection{The  spectral distance }

 Suppose we consider two graphs $G_m$ and $G_n$ on $m$ and $n$ vertices, respectively. Their associated Laplace operators are denoted by
$\Delta_m$ and $\Delta_n$, respectively. If $m \not = n$, $\Delta_m$ and $\Delta_n$  have different sizes. In order to compare 
 two given matrices, we need some definitions.

In $G_n=(V_n,E_n)$, for $\eta_n \in \F_n$ (as described in (\ref{fn})), the operator $\Delta_n^{(\eta)} $ is acting
on  an integrable  function $f : [0,1] \rightarrow {\mathbb R}$ by
\begin{eqnarray}
\Delta_n^{(\eta)} f(x)
&=& \frac n {d_n(\eta_n(x))} \int_0^1\big(f(x)-f(y)\big) W_n^{(\eta_n)}(x,y)dy \nonumber \\
&=& f(x) -\frac n {d_n(\eta_n(x))} \int_0^1 W_n^{(\eta_n)}(x,y)f(y)dy \label{del}
\end{eqnarray}
where $W^{(\eta_n)}_n \in {\mathcal W}=[0,1]\times[0,1]$ is associated with the adjacency matrix $A_n$ by $W^{(\eta_n)} (x,y)= A_n(\eta_n(x), \eta_n(y))$.
Here we require $f$ to be Lebesgue measurable and therefore is also $\mu_n$-measurable. In the remainder of the paper, we deal with functions that are
Lebesgue integrable on $[0,1]$.
We note that for any two permutations $\theta, \eta \in F_n$, $W^{(\theta_n)}$ is equivalent to $W^{(\eta_n)}$ as exchangeable operators in ${\mathcal W}^*$, defined
in (\ref{ex}).

  For a Lebesque integrable function $f : [0,1] \rightarrow {\mathbb R}$, we consider
     \begin{eqnarray*}
\langle f, \Delta_n^{(\eta_n)} g \rangle_{\mu_n,\eta_n}&=& \int_0^1f(x) \big( \Delta_n^{(\eta_n)} g(x)\big) \mu_n^{(\eta_n)} (x)\\
&=& \int_0^1\int_0^1\frac n {d_n(\eta_n(x))} f(x)\big(g(x)- g(y)\big) W_n^{(\eta_n)}(x,y)~dy~ \mu_n^{(\eta_n)} (x)
\end{eqnarray*}
Using (\ref{eta}), we have
\begin{eqnarray*}
\langle f, \Delta_n^{(\eta_n)} g \rangle_{\mu_n,\eta_n}
&=& \frac {n^2}{\vol(G_n)} \int_0^1\int_0^1 f(x)\big(g(x)- g(y)\big) W_n^{(\eta_n)}(x,y) dxdy\\
&=& \frac {n^2}{2\vol(G_n)} \int_0^1\int_0^1 \big(f(x)-f(y)\big)\big(g(x)- g(y)\big) W_n^{(\eta_n)}(x,y) dxdy.
  \end{eqnarray*}
 In particular,
 \begin{align} \label{eq99}
\langle f, \Delta_n^{(\eta_n)}f \rangle_{\mu_n,\eta_n}
&= \frac {n^2}{2\vol(G_n)} \int_0^1\int_0^1 \big(f(x)-f(y)\big)^2 W_n^{(\eta_n)}(x,y) dxdy.\end{align}

\begin{remark}{\rm
The above inner products are invariant subject to any choice of  measure preserving maps $\tau$. Namely,
if we define $f \circ \tau(x)= f(\tau(x))$, then}
\begin{eqnarray}
\label{inv}
\langle f, \Delta_n^{(\eta_n)}f \rangle_{\mu_n,\eta_n}=\langle f \circ \tau, \Delta_n^{(\eta_n \circ \tau)}f \rangle_{\mu_{n}\circ \tau,\eta_n \circ \tau}.\end{eqnarray}
\end{remark}

For an operator $M$ acting on the space of integrable functions $f : [0,1] \rightarrow {\mathbb R}$, we say $M$ is {\it exchangeable} if for any measure preserving map $\tau$, we have
\[ \langle f, M g \rangle = \langle f \circ \tau, M_\tau (g \circ \tau) \rangle \]
where $M_\tau$ is defined by $M_\tau h(x,y) = M(h (\tau^{-1}(x), h(\tau^{-1}(y))) $. 
Clearly, $\Delta_n$ is an exchangeable operator.

For an integrable function $f : [0,1] \rightarrow {\mathbb R}$ and $\eta_n \in \F_n$, we define
$\tilde f_n:  [0,1] \rightarrow {\mathbb R}$, for $x \in I_u$, as follows:
 \begin{eqnarray}\label{eq33}
\tilde f_n(x)= 
\frac{\int_{I_u} f(y) \mu_n^{(\eta_n)}(y)dy}{\int_{I_u} \mu_n^{(\eta_n)}(y)dy}&=&\frac{\int_{I_u} f(y) \mu_n^{(\eta_n)}(y)dy}{\mu_n(u)} \nonumber\\
&=&\frac{\int_{0}^1 I_n^{(\eta_n)}(x,y) f(y) \mu_n^{(\eta_n)}(y)dy}{\mu_n(u)} 
\end{eqnarray}
where $I_n$ is the $n \times n$  identity matrix as defined in Section 2.1.
Note that $\tilde f_n(x)=\tilde f_n(z)$ if $\eta_n(x)=\eta_n(z)$. For $u$ in $V(G_n)$, we write $\tilde f_n(u)= \tilde f_n(x)$ where $x \in I_u$.

$f$ and $\tilde f$ are related as follows:
\begin{lemma} \mbox{~~}\\
(i) For $x \in I_u$, and $\eta_n \in \F_n$,
\begin{eqnarray*}
 \Delta_n^{(\eta_n)}f(x) &=& \frac n {d_n(\eta_n(x))} \sum_{v} \int_{y \in I_v} (f(x)-f(y))A_n(\eta_n(x),v)dy\\
  &=& \frac 1 {d_n(u)} \sum_v  ( f (u)-\tilde f_n(v)) A_n(u,v)\\
  &=& \Delta_n \tilde f_n(u) + ( f (x)-\tilde f_n(u)) .
  \end{eqnarray*}
  (ii)  For $f, g : [0,1] \rightarrow {\mathbb R}$,
  \begin{eqnarray}\label{appro}
  \langle f, \Delta_n^{(\eta_n)} f \rangle_{\mu_n, \eta_n} &=& \langle \tilde f_n, \Delta_n  \tilde f_n \rangle_{\mu_n} + \| f - \tilde f_n\|^2_{\mu_n, \eta_n}.
  \end{eqnarray}
  \end{lemma}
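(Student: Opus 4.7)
The plan is to derive part (i) by unfolding the definition (\ref{del}) of $\Delta_n^{(\eta_n)}$ and exploiting the piecewise-constant structure of $W_n^{(\eta_n)}$, $\mu_n^{(\eta_n)}$, and $\tilde f_n$ on the partition $\{I_v\}_{v \in V_n}$; then part (ii) will follow by plugging (i) into the inner product and recognizing $\tilde f_n$ as the orthogonal projection (conditional expectation) of $f$ onto the subspace of functions that are constant on the cells $I_u$.

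For (i), I would start from (\ref{del}) and split the $y$-integral as a sum over $v \in V_n$ of integrals over $I_v$. Since $W_n^{(\eta_n)}(x,y) = A_n(u,v)$ whenever $x \in I_u$ and $y \in I_v$, this indicator structure pulls $A_n(u,v)$ out of the integral and produces the first equality immediately. To reach the second, I would use that $f(x)$ is constant in $y$ (so $\int_{I_v} f(x)\,dy = f(x)/n$) and that (\ref{muu}) together with the fact that $\mu_n^{(\eta_n)}$ takes the constant value $n\mu_n(v)$ on $I_v$ give the identity $n\int_{I_v} f(y)\,dy = \tilde f_n(v)$ from (\ref{eq33}); this absorbs the factor $n$ in front and replaces the $y$-integral by the cell-average $\tilde f_n(v)$, while the symbol ``$f(u)$'' in the middle line is just the value $f(x)$ at the point $x \in I_u$ of the left-hand side. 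The third equality is purely algebraic: expand
\[ \Delta_n \tilde f_n(u) = \tilde f_n(u) - \frac{1}{d_n(u)}\sum_v A_n(u,v)\,\tilde f_n(v), \]
add $f(x) - \tilde f_n(u)$, and observe that the $\tilde f_n(u)$ terms cancel against $\frac{1}{d_n(u)}\sum_v A_n(u,v)\,f(x)=f(x)$.

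For (ii), I would apply (i) to write, for $x \in I_u$, the decomposition $\Delta_n^{(\eta_n)} f(x) = \Delta_n \tilde f_n(u) + (f(x) - \tilde f_n(u))$, multiply by $f(x)\mu_n^{(\eta_n)}(x)$, and integrate. Because $\Delta_n \tilde f_n(u)$ is constant on each $I_u$, the first piece collapses to
\[ \sum_u \Delta_n \tilde f_n(u)\int_{I_u} f(x)\,\mu_n^{(\eta_n)}(x)\,dx = \sum_u \Delta_n \tilde f_n(u)\,\tilde f_n(u)\,\mu_n(u) = \langle \tilde f_n,\Delta_n \tilde f_n\rangle_{\mu_n}, \]
using (\ref{eq33}) in the form $\int_{I_u} f\,\mu_n^{(\eta_n)} = \tilde f_n(u)\mu_n(u)$. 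The second piece equals $\|f - \tilde f_n\|^2_{\mu_n,\eta_n}$ once one notices that on each cell the correction $\tilde f_n(u)\bigl(f(x)-\tilde f_n(u)\bigr)$ integrates to zero against $\mu_n^{(\eta_n)}$: indeed $\int_{I_u}(f(x)-\tilde f_n(u))\,\mu_n^{(\eta_n)}(x)\,dx = \tilde f_n(u)\mu_n(u) - \tilde f_n(u)\mu_n(u) = 0$ by (\ref{eq33}), so subtracting $\tilde f_n(u)(f(x)-\tilde f_n(u))$ from $f(x)(f(x)-\tilde f_n(u))$ does not change the integral and converts it to $(f(x)-\tilde f_n(u))^2$.

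No individual step is technically difficult; the substantive content is simply that $\tilde f_n$ is the $\mu_n^{(\eta_n)}$-orthogonal projection of $f$ onto the $\sigma$-algebra generated by the partition $\{I_u\}$, which makes $f-\tilde f_n$ orthogonal to every piecewise-constant function such as $\tilde f_n$ itself and $\Delta_n \tilde f_n$. The only point requiring care is tracking the scaling: because $\mu_n^{(\eta_n)}$ equals $n\mu_n(u)$ rather than Lebesgue measure on $I_u$, the prefactor $n$ appearing in (\ref{del}) is exactly what converts the bare cell-integral $\int_{I_v}f(y)\,dy$ into the $\mu_n^{(\eta_n)}$-average $\tilde f_n(v)$, so that both (i) and (ii) line up without leftover factors.
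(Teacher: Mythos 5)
Your proof is correct and is essentially the argument the paper intends: the paper gives only the one-line remark that (i) follows from (\ref{Del}) and (\ref{eq33}) and that (ii) follows from (i) and (\ref{eq99}) by straightforward manipulation, and your computation fills in exactly those steps, with the scaling identity $n\int_{I_v} f(y)\,dy = \tilde f_n(v)$ and the orthogonality of $f-\tilde f_n$ to piecewise-constant functions doing the work. The only cosmetic difference is that for (ii) you substitute the decomposition from (i) directly rather than routing through the quadratic form (\ref{eq99}), which if anything is cleaner.
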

  The proof of (i) follows from   (\ref{Del}) and (\ref{eq33}). (ii) follows from (i) and (\ref{eq99}) by straightforward manipulation.

\begin{remark}
{\rm
In this paper, we define inner products and norms on the space of integrable functions defined on $[0,1]$, as seen in (\ref{mu_n}) and (\ref{mu_norm}). Consequently, the last term in (\ref{appro}) approaches $0$ as
$n$ goes to infinity. Namely,
\[ \| f - \tilde f_n\|^2_{\mu_n, \eta_n} \rightarrow 0 ~~\mbox{as $n \rightarrow \infty$} \]
if $f$ is integrable.
This implies that the graph Laplacian $\Delta_n$  for $G_n$ acting on the space of functions defined on $V_n$
can be approximated by $\Delta_n^{(\eta_n)}$ acting on the space of functions defined on $[0,1]$ 
with the exception for the function $f$ with  $\|\Delta_n f\|_{\mu_n, \eta_n} $ is too close to $0$, while $f$ is   orthogonal to the eigenfunction associated with eigenvalue $0$. The case of  a path $P_n$ is one such example and in fact, the graph sequence
of paths $P_n$ does not converge under the spectral distance that we shall define.  In order to make sure that 
 $\Delta_n^{(\eta_n)}$  closely approximates $\Delta_n$, there are two ways to proceed.
We can restrict (implicitly) ourselves to graph sequences 
$G_n$ with the least nontrivial eigenvalue $\lambda_1 $ of $\Delta_n$ greater than some absolute positive constant
(as done in this paper). An alternative way is to consider general labeling space $\Omega_0$ other than $[0,1]$ and 
impose further conditions on the 
space of functions defined on $\Omega_0$  (which will be treated in a subsequent paper).
}
\end{remark}

     For a graph sequence $G_n=(V_n,E_n)$, where $n = 1, 2 , \ldots$,  we say the sequence of the Laplace operators  $\Delta_n$ is { Cauchy} if for any $\epsilon > 0$ there exists $N$ such that for $m, n \geq N$, there exist $\theta_m \in \F_m$, $\theta_n \in \F_n$  such that the following holds:
     \\
     (i) The associated measures $\mu_m^{(\theta_m)}$  and $\mu_m^{(\theta_m)}$ satisfy
     \[\int_0^1 |\mu_m^{(\theta_m)}(x) - \mu_n^{(\theta_n)}(x)|  < \epsilon.\]
(ii)  The  Laplace operators associated with $G_m$ and $G_n$ satisfy 
\begin{eqnarray*}
  \left|
\frac{\langle f, \Delta_m^{(\theta_m)} g \rangle_{\mu_m,\theta_m}}{\|f\|_{\mu_m,\theta_m}\|g\|_{\mu_m,\theta_m}}
-\frac{\langle f, \Delta_n^{(\theta_n)} g \rangle_{\mu_n,\theta_n}}{\|f\|_{\mu_n,\theta_n}\|g\|_{\mu_n,\theta_n}}
\right| < \epsilon
\end{eqnarray*}
for  integrable $f, g$ defined on $ [0,1] $ and we write
\begin{eqnarray}
\label{cauchy}
d(\Delta_{m},\Delta_{n})_{\mu_m, \mu_n} < \epsilon \end{eqnarray}
where $\mu_m, \mu_n$ denote the degree distributions of $G_m, G_n$, respectively.

\begin{remark}We note that the spectral distance here
is invariant subject to  any choices of measure preserving maps. In fact, for any  measure preserving map $\tau$, it follows from the definition and 
that
$
d(\Delta_m,\Delta_n)_{\mu_m, \mu_n} < \epsilon$ if and only if $
d(\Delta_m,\Delta_n)_{\mu_m \circ \tau, \mu_n \circ \tau} <  \epsilon
$.
\end{remark}

 Suppose the  sequence of graphs $G_n=(V_n,E_n)$ have degree distributions $\mu_n$ converging to $\mu$ as above.
Then (\ref{cauchy}) can be simplified.
The inequality in (\ref{cauchy}) can be replaced by an equivalent condition
\[ d(\Delta_{m},\Delta_{n})_{\mu}  < \epsilon \]
which can be described by
there exists $N$ such that for $m, n \geq N$, there exist $\theta_m \in F_m$, $\theta_n \in F_n$ such that
the  Laplace operators associated with $G_m$ and $G_n$ satisfy 
\begin{eqnarray}
  \left|
\langle f, (\Delta_m^{(\theta_m)}-\Delta_n^{(\eta_n)}) g \rangle_{\mu}
\right| < \epsilon
\end{eqnarray}
for  integrable $f, g : [0,1] \rightarrow {\mathbb R}$ with $ \|f\|_{\mu}= \|g\|_{\mu}=1$ .

 For an operator $M$ on $[0,1]$ we can define spectral $\mu$-norm, defined by
\[ \| M\|^2_\mu= \sup_{f,g} \mid \langle f, M g \rangle_\mu \mid \]
where $f,g : [0,1] \rightarrow {\mathbb R}$ range over integrable functions satisfy $\|f\|_\mu=\|g\|_\mu=1$.
We are ready to examine the convergence of a graph sequence under the spectral distance.
\begin{theorem}
\label{lapconv}
For a graph sequence $G_n=(V_n,E_n)$, where $n = 1, 2 , \ldots$, suppose the sequence of the Laplace operators $\Delta_n$ is Cauchy,
 then  for each $n$, there are  permutations $\theta_n \in F_n$ such that  the sequence of $\Delta_n^{(\theta_n)} $ converges to an exchangeable operator $\Delta$ and
the measure $\mu_n^{(\theta_n)}$ of $G_n$'s  converge to $\mu$
where $\Delta$ satisfies
\begin{eqnarray}
\label{eq10}
\int_0^1f (x)\Delta g(x) \mu(x) = \lim_{n \rightarrow \infty} \langle f ,\Delta_n^{(\theta_n)} g \rangle_{ \mu_n}
\end{eqnarray}
for any two integrable functions $f, g : [0,1] \rightarrow {\mathbb R}$.
\end{theorem}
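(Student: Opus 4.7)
The plan is to mirror the proof of Lemma \ref{muconv} to fix the labelings, then extract a limiting bilinear form from condition (ii), and finally represent that form by an operator. Condition (i) in the Cauchy hypothesis is precisely the Cauchy condition for degree distributions, so Lemma \ref{muconv} supplies permutations $\theta_n \in \F_n$ and a limiting measure $\mu$ with $\mu_n^{(\theta_n)} \to \mu$ in $L^1([0,1])$. However, those $\theta_n$ realize only (i), whereas the hypothesis supplies, at each scale $\epsilon$, permutations realizing both (i) and (ii) simultaneously. I would therefore redo the proof of Lemma \ref{muconv} with $\epsilon_j = 2^{-j}$, choosing at each milestone $M(j)$ a permutation that realizes both (i) and (ii) between $G_{M(j)}$ and $G_{M(j-1)}$, and interpolating to intermediate $n$ by the same triangle-inequality argument. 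The output is a single sequence $\theta_n$ along which both $\mu_n^{(\theta_n)} \to \mu$ in $L^1$ and $d(\Delta_m^{(\theta_m)}, \Delta_n^{(\theta_n)})_{\mu_m,\mu_n} \to 0$ as $m,n \to \infty$.

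For such a sequence of labelings the limiting bilinear form is immediate. Fix integrable $f, g : [0,1] \to \mathbb{R}$, normalized so that $\|f\|_\mu = \|g\|_\mu = 1$. The Cauchy hypothesis says that the scalar sequence
\begin{equation*}
a_n(f,g) \;=\; \frac{\langle f, \Delta_n^{(\theta_n)} g \rangle_{\mu_n,\theta_n}}{\|f\|_{\mu_n,\theta_n}\,\|g\|_{\mu_n,\theta_n}}
\end{equation*}
is Cauchy in $\mathbb{R}$, and since $\mu_n^{(\theta_n)} \to \mu$ in $L^1$ the denominators tend to $1$, so the numerators themselves converge to some $B(f,g) \in \mathbb{R}$. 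Extending by bilinearity yields a symmetric bilinear form $B$ on the integrable functions (symmetry being inherited from the self-adjointness of each $\Delta_n$ under $\mu_n$ proved in Section \ref{lap}), satisfying a uniform bound $|B(f,g)| \le 2\|f\|_\mu \|g\|_\mu$, since $\Delta_n = I_n - D_n^{-1}A_n$ has $\mu_n$-spectrum in $[0,2]$.

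To produce the operator $\Delta$, I apply a Riesz-type representation: for each fixed integrable $g$, the map $f \mapsto B(f,g)$ is a bounded linear functional in the $\mu$-norm, so there is a function $\Delta g$, unique up to a $\mu$-null set, such that $\int_0^1 f(x)\, \Delta g(x)\, \mu(x) = B(f,g)$, which is exactly (\ref{eq10}). Exchangeability of $\Delta$ then follows by passing the invariance identity (\ref{inv}) to the limit: for any Lebesgue measure-preserving bijection $\tau$, the $n$-th bilinear form is invariant under the simultaneous substitution $(f, g, \theta_n, \mu_n) \to (f \circ \tau, g \circ \tau, \theta_n \circ \tau, \mu_n \circ \tau)$, and this invariance is preserved in the limit.

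The main obstacle I expect is the coordination of labelings in the first step. The Cauchy hypothesis guarantees permutations realizing both (i) and (ii) simultaneously for each pair $(m,n)$, but stitching together a single sequence $\theta_n$ that works for both convergences across all $n$ requires the milestone-and-interpolation construction from Lemma \ref{muconv} to be executed for a pair of Cauchy conditions rather than one. Without this coordination, the measure $\mu$ and the operator $\Delta$ could end up tied to incompatible sequences of labelings; once it is achieved, the remaining steps are a routine passage-to-the-limit in a bounded symmetric bilinear form.
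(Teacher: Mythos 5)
Your proposal is correct and follows essentially the same route as the paper: the same $\epsilon_j=2^{-j}$ milestone-and-interpolation construction of a single coordinated sequence $\theta_n$, a triangle-inequality argument showing both $\mu_n^{(\theta_n)}$ and $\Delta_n^{(\theta_n)}$ are Cauchy, and the definition of $\Delta$ through the limiting bilinear form. The only differences are cosmetic: you carry condition (i) along in the milestone construction to get $\mu_n^{(\theta_n)}\to\mu$ directly (the paper instead re-derives this from the spectral distance in its Claim 2), and you add an explicit Riesz-representation step to realize the limit form as an operator, which the paper leaves implicit.
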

\proof

For each positive integer $j$, we set  $\epsilon_j=2^{-j}$, and  let $ N(\epsilon_j)$ denote the least integer such that for $m,n \geq N(\epsilon_j)$, 
(\ref{cauchy}) holds for $\epsilon_j$.  To simplify the notation, we write $M(j)=N(\epsilon_j)$.

We first choose an arbitrary permutation $\eta_{(1)} \in F_{M(j)}$  and then   by induction define  permutations $\theta_{(j)} \in F_{M(j)}$'s, for $j>1$, using (\ref{cauchy})  so that 
\[ d(\Delta_{M(j)}, \Delta_{M(j+1)})_{\mu_{M(j)}^{(\theta_{(j)})}, \mu_{M(j+1)}^{(\theta_{(j+1)})}} < \epsilon_j.
\]
We can assume the associated measure for $\theta_{(j)}$ is  non-decreasing since we can simply adjust by choosing measure preserving maps.

For  each $n \in [M(j), M(j+1))$, we choose the  permutation $\theta_n$ such that
\[ d(\Delta_{n}, \Delta_{M(j)})_{\mu_{n}^{(\theta_{n})}, \mu_{M(j)}^{(\theta_{(j)})}} < \epsilon_j.
\]

We will use a  similar method as  in Lemma \ref{muconv} to prove the following:

\noindent
{\it Claim 1:} The sequence of  $\Delta_n^{(\theta_n)}$, for $n =1, 2, \ldots$  is Cauchy.

To prove the claim, we see that for any $m ,n  \geq M(j) $ satisfying $n \in [M(j), M(j+1)) $ and $m \in [M(k), M(k+1))$ with $j \leq k$, we have
\begin{eqnarray*}
&&
d(\Delta_m, \Delta_n)_{ \mu_{n}^{(\theta_{n})},\mu_{m}^{(\theta_{m})}}\\
&\leq& d(\Delta_{n}, \Delta_{M(j)})_{\mu_{n}^{(\theta_{n})}, \mu_{M(j)}^{(\theta_{(j)})}}+ \ldots \\
&&+d(\Delta_{M(k-1)}, \Delta_{M(k)})_{\mu_{M(k-1)}^{(\theta_{(k-1)})}, \mu_{M(k)}^{(\theta_{(k)})}} +d(\Delta_{M(k)}, \Delta_{m})_{\mu_{M(k)}^{(\theta_{(k)})},\mu_m^{(\theta_n)}}\\
&\leq& 2\epsilon_j + \epsilon_{j+1} + \ldots + \epsilon_{k-1} + 2 \epsilon_k\\
&=& 3 \epsilon_j
 \end{eqnarray*}
and  Claim 1 is proved.

\noindent
{\it Claim 2:}  The sequence of $\mu_n^{(\theta_n)}$ is Cauchy and therefore converges to a limit $\mu$.\\
To prove Claim 2, we will first  show that  for any $\epsilon > 0$,  $m,n \geq N(\epsilon_j)$,  and  any subset $S \subset [0,1]$, we have $|\mu_m^{(\theta_m)}(S)-\mu_n^{(\theta_n)}(S)| \leq 6 \epsilon_j$.

From the proof of Claim 1, we know that $d(\Delta_m^{(\theta_m)},\Delta_n^{(\theta_n)}) \leq 3 \epsilon_j$, which implies,
by choosing    $f=\chi_S$ and $g=\mathbf 1$ in  (\ref{eq9}) and (\ref{eq12}), 
\begin{eqnarray*}
3 \epsilon_j &\geq& d(\Delta_m^{(\theta_m)},\Delta_n^{(\theta_n)})\\
 &\geq& \left|  \sqrt{\mu_m^{(\theta_m)}(S)} -\sqrt{\mu_n^{(\theta_n)}(S)}
 \right|\\
 &\geq&\frac { \left|{\mu_m^{(\theta_m)}(S)} -{\mu_n^{(\theta_n)}(S)}\right|}{ \sqrt{\mu_m^{(\theta_m)}(S)} +\sqrt{\mu_n^{(\theta_n)}(S)}}
 \\
 &\geq& \frac 1 2 \left|{\mu_m^{(\theta_m)}(S)} -{\mu_n^{(\theta_n)}(S)}\right|.
 \end{eqnarray*}
To show that $\mu_n^{(\theta_n)}$ is Cauchy, we set $S=\{x:\mu_n^{(\theta_n)}(x) > \mu_m^{(\theta_m)}(x) \}$. Then,
\begin{eqnarray*}
\int_0^1 \mid \mu_n^{(\theta_n)}(x) - \mu_m^{(\theta_m)}(x) |&=& 2 \int_S\mid \mu_n^{(\theta_n)}(x) - \mu_m^{(\theta_m)}(x) \mid
+\int_{\bar{S}}\mid \mu_n^{(\theta_n)}(x) - \mu_m^{(\theta_m)}(x) \mid \\
&=& 2 \mid \mu_n^{(\theta_n)}(S) - \mu_m^{(\theta_m)}(S) \mid\\
&\leq& 12 \epsilon_j.
\end{eqnarray*}
 Claim 2 is proved.

Now, we can define the operator $\Delta$:
\begin{eqnarray}
\langle f, \Delta g \rangle &=&
\int_0^1f (x)\Delta g(x) \mu(x)\\
 &=& \lim_{n \rightarrow \infty} \langle f ,\Delta_n^{(\theta_n)} g \rangle_{ \mu_n}
\end{eqnarray}
for any two integrable functions $f, g : [0,1] \rightarrow {\mathbb R}$.

Combining Claims 1 and 2,  the sequence $\Delta_n^{(\theta_n)}$  converges to a limit $\Delta$.
\qed

For a graph sequence $G_n,$ where $ n=1,2, ...$, the  Laplace operator $\Delta_n$ of $G_n$ and $W_{G_n} \in {\mathcal W}^*$ are related as follows:
For functions $f,g: [0,1] \rightarrow {\mathbb R}$,  by using (\ref{eta}) we have
\ \begin{eqnarray*}
\langle f, (I-\Delta_n^{(\eta_n)}) g \rangle_{\mu_n,\eta}
&=& \int_0^1f(x) \big((I- \Delta_n^{(\eta_n)}) g(x)\big) \mu_n^{(\eta_n)} (x)\\
&=& \int_0^1\int_0^1\frac n {d_n(\eta_n(x))} f(x) g(y)W_n^{(\eta_n)}(x,y)dy \mu_n^{(\eta_n)} (x)\\
&=& \frac {n^2}{\vol(G_n)} \int_0^1\int_0^1 f(x) g(y) W_n^{(\eta_n)}(x,y) dxdy
\end{eqnarray*}
although the existence of the limit of $W_{G_n}$ is not necessarily required.

There are similarities  between  $\Delta$  and  the previous definitions for graph limits (as defined in \cite{lsz}) but
 the scaling is different as seen below:
\begin{eqnarray}
\label{eq11}
\int_0^1 f(x)\big((I-\Delta) g(x)\big) \mu(x) &=&\lim_{n \rightarrow \infty} \int_0^1 f(x)\big((I-\Delta_n^{(\eta_n)}) g\big)(x) \mu_n^{(\eta_n)}(x) \nonumber\\
&=& \lim_{n \rightarrow \infty}  \langle f, \frac{n^2}{\vol(G_n)}W_ng \rangle.
\end{eqnarray}
Suppose the graph sequence have  volume $\vol(G_n)$ converging to a  function $\Phi$. Then we have
\begin{eqnarray}
\label{eq111}
\int_0^1 f(x)\big((I-\Delta) g(x)\big) \mu(x)
&=& \lim_{n \rightarrow \infty}  \langle f, \frac{n^2}{\vol(G_n)}W_{G_n}g \rangle
\end{eqnarray}
Thus, the Laplace operator $\Delta$  as a limit of  $\Delta_n$   is essentially the identity operator minus
a scaled multiple of the  limit $W$.
We  state here  the following useful fact which follows from Theorem \ref{lapconv}:
\begin{lemma}
\label{ma1}
For a sequence of graphs $G_n, $ for $ n = 1, 2, \ldots,$ with  degree distributions $\mu_n$ converging to $\mu$,
 the associated Laplace operators $\Delta_n $ converges to $\Delta$ satisfying
\begin{eqnarray}
\langle \chi_S, (I-\Delta) {\mathbf 1} \rangle_\mu = \mu(S)\geq 0, \label{ss}\\
\text{and}~~~\langle \chi_S, (I-\Delta) \chi_T \rangle_\mu \geq 0\label{tt}
\end{eqnarray}
for any integrable subsets $S,T \subseteq [0,1]$ where ${\mathbf 1}$ is the constant function assuming the value $1$.
\end{lemma}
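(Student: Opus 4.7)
The plan is to derive both inequalities by testing the limit operator $\Delta$ on characteristic functions and pulling the nonnegativity back through the limit in Theorem \ref{lapconv}.

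For the first identity, I would start from the fact that for each finite $n$, the constant function $\mathbf 1$ lies in the kernel of $\Delta_n$, and this lifts cleanly to $\Delta_n^{(\theta_n)}\mathbf 1 = 0$ on $[0,1]$ (since $\mathbf 1$ is constant on every interval $I_u$, so $\tilde{\mathbf 1}_n = \mathbf 1$ and the formula in part (i) of the lemma preceding Remark 6 gives $\Delta_n^{(\theta_n)}\mathbf 1 = \Delta_n \mathbf 1 = 0$). Consequently $(I-\Delta_n^{(\theta_n)})\mathbf 1 = \mathbf 1$, and
\[
\langle \chi_S,(I-\Delta_n^{(\theta_n)})\mathbf 1\rangle_{\mu_n,\theta_n}
= \langle \chi_S,\mathbf 1\rangle_{\mu_n,\theta_n}
= \mu_n^{(\theta_n)}(S).
\]
By Theorem \ref{lapconv}, the left side converges to $\langle\chi_S,(I-\Delta)\mathbf 1\rangle_\mu$, and by Lemma \ref{muconv} the right side converges to $\mu(S)$. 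Since $\mu$ is a (nonnegative) measure, $\mu(S)\geq 0$, which gives (\ref{ss}).

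For the second inequality, I would use the explicit integral representation derived just before the statement, namely
\[
\langle f,(I-\Delta_n^{(\eta_n)})g\rangle_{\mu_n,\eta_n}
= \frac{n^2}{\vol(G_n)}\int_0^1\!\!\int_0^1 f(x)g(y)\,W_n^{(\eta_n)}(x,y)\,dx\,dy.
\]
Setting $f=\chi_S$ and $g=\chi_T$, the integrand is nonnegative because $W_n^{(\eta_n)}\geq 0$ (it equals the nonnegative adjacency weights $A_n(u,v)$), and the prefactor $n^2/\vol(G_n)$ is positive. Hence the whole expression is $\geq 0$ for every $n$. Taking the limit as $n\to\infty$ using the convergence statement (\ref{eq10}) of Theorem \ref{lapconv} yields
\[
\langle \chi_S,(I-\Delta)\chi_T\rangle_\mu
= \lim_{n\to\infty}\langle \chi_S,(I-\Delta_n^{(\theta_n)})\chi_T\rangle_{\mu_n,\theta_n}\geq 0,
\]
which is (\ref{tt}).

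The only mild subtlety, and the step I would pay the most attention to, is verifying that characteristic functions $\chi_S,\chi_T$ of integrable (measurable) subsets are legitimate test functions in (\ref{eq10}); Remark 4 says exactly that Lebesgue integrability suffices and that the measures $\mu_n,\mu$ are compatible with Lebesgue integration, so this is covered. Beyond that, both steps are immediate consequences of pulling a nonnegative quantity through the spectral-distance limit, combined with the fact that $\mathbf 1$ is always an eigenfunction of $\Delta_n$ with eigenvalue $0$.
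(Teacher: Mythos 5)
Your proposal is correct and follows essentially the same route as the paper: for (\ref{ss}) both arguments use $\Delta_n\mathbf 1=0$ to reduce the inner product to $\mu_n(S)$ and pass to the limit, and for (\ref{tt}) both reduce to the nonnegativity of the adjacency weights (the paper via $\langle \chi_u,(I-\Delta_n)\chi_v\rangle = A_n(u,v)/\vol(G_n)\geq 0$ on vertex pairs, you via the equivalent double-integral representation with $W_n^{(\eta_n)}\geq 0$). Your explicit check that characteristic functions are admissible test functions in (\ref{eq10}) is a welcome extra detail the paper leaves implicit.
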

\proof The proof of (\ref{ss}) follows from the fact that
\begin{eqnarray*}
\langle \chi_S, (I-\Delta) {\mathbf 1} \rangle_\mu &=&\lim_{n \rightarrow \infty} \langle \chi_S^{(n)}, (I-\Delta_n) {\mathbf 1} \rangle_{\mu_n}\\
&=& \lim_{n \rightarrow \infty} \langle \chi_S^{(n)},  {\mathbf 1} \rangle_{\mu_n}\\
&=& \lim_{n \rightarrow \infty} \mu_n(S^{(n)})\\
&=& \mu(S).
\end{eqnarray*}
To see (\ref{tt}), we note that for any two vertices $u, v$ in $G_n$,  $\langle \chi_u , (I-\Delta_n)\chi_v \rangle_\mu = A_n(u,v)/\vol(G_n) \geq 0$.
\qed

\subsection{Defining the graphlets}
\label{lets}

Using the convergence definitions in the previous subsections,  we  define graphlets as the limit of a graph sequence
\begin{eqnarray}
\label{conv}
 G_1, G_2, \dots, G_n, \dots \rightarrow \mathcal{G}(\Omega, \Delta) \end{eqnarray}
which satisfies the following conditions:
 \vspace{-.1in}
\begin{enumerate}
\item  The degree distributions  of $G_n$ introduce measures $\mu_n$ on $\Omega$ and $\mu_n$ converges to a measure $\mu$ for
 $\Omega$ as in (\ref{deg}).
\item
 The discrete Laplace operators $\Delta_n$  for $G_n$ converges to $\Delta$ as an operator on $\Omega$ under the spectral distance 
 using the $\mu$-norm as in (\ref{muu}) and (\ref{eq10}). 
 \item
The  volume $\vol(G_n)$ of $G_n$ is increasing in $n$.
\end{enumerate}

  Several examples of graphlets will be given in the next section.

\begin{remark}
{\rm
One advantage of the graphlets  $\mathcal{G}(\Omega, \Delta)$  is the fact that the eigenvectors of graphs in the graph sequences can be approximated by eigenvectors of $\Delta$. In other words, eigenvectors of $\Delta$ can be used as universal basis for all graphs in graph sequences in  the graphlets $\mathcal{G}(\Omega, \Delta)$. 
}
\end{remark}
\begin{remark}
{\rm
In the other direction, graphs in graph sequences in  graphlets  $\mathcal{G}(\Omega, \Delta)$  can be viewed as a scaling for
discretization of $\Omega$ and $\Delta$. If two different graph sequences converge to the same graphlets, they can be viewed as giving different scaling for discretization.
}
\end{remark}
\begin{remark}
{\rm
Another way to describe a graphlets  is to view $\mathcal{G}(\Omega, \Delta)$ as the limit of graphlets $\mathcal{G}(\Omega_n, \Delta_n)$. Here  $\Omega_n$ can be described as a measure space under a measure
$\mu_n$ as follows.
The elements in $\Omega_n$, (the same as that of $\Omega$, labelled by $[0,1]$) is the union of $n$ parts, denoted by $I_v$, indexed by vertices  $v$ of
$G_n$.  
The degree of $v$ satisfies}
\vspace{-.4in}
\end{remark}
\begin{eqnarray*} d_n(v) &\approx &\vol(G_n) \int_{I_v} \mu(x).\end{eqnarray*}
The Laplace operator $\Delta_n$ can be defined by using the adjacency entry $A_n(u,v)=W_n(x,y)$  for $x \in I_u$ and $y \in I_v$.
Namely, $\Delta_n(x,y)= I_n(x,y)-W_n(x,y)/d_x$. 
 The graphlets $\mathcal{G}(\Omega, \Delta)$ as the limit of $\mathcal{G}(\Omega_n, \Delta_n)$ specifies the incidence quantitiy between any  two integrable  subsets  $S$ and $T$ in $\Omega$. 
 For an integrable  $S \subseteq \Omega$, we let  $\chi_S$ denote the characteristic function of $S$, which assume the value $1$ on $S$, and $0$ otherwise. In $\Omega_n$, the incidence quantity between $S$ and $T$, denoted by ${\mathcal E}_n(S,T)$
 satisfies:
\begin{eqnarray}
\label{st}
 {\mathcal E}_n(S,T) = \vol(G_n)\int_0^1 \chi_S(x)\big((I-\Delta_n)\chi_T(x)\big) \mu_n(x). \end{eqnarray}
In particular, for $S=T$,
\begin{eqnarray*}
E_n(S,S)&\approx& \vol(G_n)\big(\mu(S)- \mu(\partial(S)) \big)
\end{eqnarray*}
where the boundary $\partial(S)$ of $S$ satisfies
\[ E_n(S, \bar{S})\approx \vol(G_n)\mu(\partial(S))=\vol(G_n) \int_0^1 \chi_S(x) \Delta\chi_S(x)\mu(x). \]
\ignore{\subsection{Similarities and differences of graphlets and manifolds}

 When we examine graphlets, the examples from manifolds often come to mind.
 To illustrate the difference between graphlets and manifolds, here we construct a graphlets from a manifold $\Omega$
 with a measure $\nu$.  Basically we construct a sequence of graphs $G_n$'s which are discretizations
 of $\Omega$ as follows:
 \vspace{-.1in}
 \begin{itemize}
 \item[(i)]
 First we partition $\Omega$ into $n$ parts, each part is associated with a vertex of $G_n$. Here we assume that $n$ is quite large  and the partition is {\it smooth} subject to the conditions (ii) and (iii) below.

\item[(ii)]  Suppose a vertex $v$ is associated with a part $I_v \subset \Omega$. 
We assume that the boundary of each part 
 is measurable under $\nu$.
We can then define a {\it new} measure $\mu_n(v)$ by
$$\mu_n(v) = c \cdot \nu(\partial (I_v)) $$ where the constant $c$  is chosen so
that $\sum_v \mu_n(v)=1$  and  $\partial(S)$ denotes the boundary of a subset $S$. In $\Omega_n$, all $x \in I_u$ shares
the same value,
\[ \mu(x)= \frac{ \mu_n(v)}{\int_{I_v} \nu(x)} \]
so that
\[ \int_{I_v}\mu(x)=\mu_n(v). \]
If for all $v$, $ \int_{I_v} \nu(x)$  are all equal, we have $\mu(x)=n \mu_n(v)$.
 \item[(iii)]
 We assume that the shared boundary of any two parts is measurable under $\nu$.
 Furthermore, we assume that 
\[ \nu(\partial(I_v) = \sum_u \nu(\partial(I_u)\cap \partial(I_v)). \]
The adjacency matrix $A_n$ of $G_n$ is defined by 
$$A_n(u,v)=\vol(G_n) \cdot c \cdot \nu(\partial(I_u\cap \partial(I_v))$$ and $\vol(G_n)$ is
  nondecreasing  in $n$.
\item[(iv)] From $A_n$, we can then define the degree distribution $\mu_n$ and the degree $d_v$ of a vertex in $G_n$. We see that
the degree $d_v$ of a vertex of $G_n$ is just $\Phi_n \mu_n(v)$. We can define the discrete Laplace operator $\Delta_n$ for $G_n$
and the associated $\Omega_n$ as in (1) and (ii) above.
  \end{itemize}
 
 From $\Omega_n$ we can construct the graphlets as the limit. The graphlets $\Omega(\mu, \Delta)$ could be  quite different
 from the original manifold $([0,1],\nu)$. The measure $\mu$ is certainly not necessarily equal to the  measure $\nu$.
$\Delta$ can be quite different from the usual Laplace operator of a manifold. 
Roughly speaking, the graphlets $\Omega(\mu, \Delta,)$ represent the quantitative  incidence  among elements and subsets of $\Omega$. Namely, 
for any discretization of $\Omega$, the quantity of incidence between the parts can be represented as in Equation (\ref{st}).
For example, for the case of a graph sequence of $n$-paths, the graphlets turns out to be the same as a unit interval since we have $\mu=\nu$ and $\Delta$
is  the same as   the continuous Laplace operator while we choose $\vol(G_n)=2n$.  However,  we also see  examples of dense graph sequences,
as illustrated in Section \ref{dense}, which  do not
converge to  manifolds in general.

 }
 
\ignore{
\section{The spectral distance and the discrepancy distance}

\label{2norms}

\subsection{The cut distance and the discrepancy distance}
In previous studies of graph limits,  a so-called cut metric that is often used  for which the distance of two graphs $G$ and $H$ which share the same set of vertices $V$ is measured by the following (see \cite{ borgs, fk}).
\begin{eqnarray}
\label{cut}
\cut(G , H)= \frac 1 {|V|^2}
\sup_{S, T \subseteq V} \left| E_{G}(S,T) - E_{G'}(S,T) \right|
\end{eqnarray}
where $E_G(S,T)$ denotes the number of ordered pairs $(u,v)$ where $u $ is in $ S$, $v$ is in $ T$ and $\{u,v\}$ is an edge in $G$.

We will   define a  discrepancy distance  which is similar to but different from the above cut distance.  For two graphs $G$ and $H$ on the same vertex
set $V$,
the discrepancy distance, denoted by $\disc(G,H)$ is defined
as follows:
\begin{eqnarray}
\label{disc}
\disc(G, H)
=\sup_{S, T \subseteq V}\left| \frac{E_{G}(S,T)}{\sqrt{\vol_G(S)\vol_G(T)}}-\frac{E_{H}(S,T)}{\sqrt{\vol_H(S)\vol_H(T)}} \right|. \end{eqnarray}
We remark that the only difference between the  the cut distance and the discrepancy distance is in the normalizing factor which will be useful
in the proof later.

For two graphs $G_m$ and $G_n$ with $m$ and $n$ vertices respectively, we use the labeling maps $\theta$ and $\eta$ to map the vertices of $G_m$ and $G_n$
to $[0,1]$, respectively.  We define the measures $\mu_m$ and $\mu_n$ on $[0,1]$ using the degree sequences of $G_m$ and $G_n$ repectively, as in
Section \ref{subdeg}.   From the definitions and substitutions, we can write:
\begin{eqnarray}
\label{enst}
 E_{G_n}(S,T)=\vol(G_n) \langle \chi_S, (I-\Delta_n) \chi_T \rangle_{\mu_n,\theta}.
 \end{eqnarray}
Therefore the  discrepancy distance in (\ref{disc}) can be written in the following general format:
\begin{align}
&\disc(G_m,G_n)\nonumber\\
&=\inf_{\theta \in {F}_m, \eta \in F_n} \sup_{S, T \subseteq [0,1]} \left| \frac{\langle \chi_{S}, (I-\Delta_m) \chi_{T} \rangle_{\mu_m,\theta}} 
{\sqrt{\mu_m(S)
 \mu_m(T)}
 }
-\frac{\langle \chi_{S}, (I-\Delta_n) \chi_{T} \rangle_{\mu_n,\eta}}{\sqrt{\mu_n(S)
 \mu_n(T)}
 } \right| \nonumber\\
\label{gm}
\end{align}
where $S, T$ range over all integrable subsets of $[0,1]$.
We can rewrite (\ref{enst}) as follows.
\begin{eqnarray}
\label{aa}
E_{G_n}(S,T)&=&\vol(G_n) \int_{x \in \Omega} \chi_S(x) \big((I-\Delta_n) \chi_T\big)(x) \mu_n(x).
\end{eqnarray}
Alternatively,  $E_{G_n}(S,T)$ was  previously expressed   (see   \cite{lsz}) as follows:
\begin{eqnarray}
\label{previous}
E_{G_n}(S,T)&=& n^2 \int_{x \in S}\int_{y \in T} { W}(x,y)~ ds~ dt
\end{eqnarray}

The two formulations (\ref{aa}) and (\ref{previous})  look quite different but are of the same form when the  graphs involved are regular.
However,  the format in (\ref{previous}) seems hard to extend to   general graph sequences with smaller edge density.

Although the above definition in (\ref{gm}) seems complicated, it can be simplified when the degree sequences converge. Then,  $\mu_m$ and $\mu_n$ are to be approximated
by the measure $\mu$ of the graph limit. In such cases, we define
\begin{align*}
\disc_\mu(G_m,G_n)
 &=\inf_{\theta \in {F}_m, \eta \in F_n} \sup_{S, T \subseteq [0,1]} \left| \frac{\langle \chi_{S}, \Delta_m \chi_{T} \rangle_{\mu,\theta}} 
{\sqrt{\mu(S)
 \mu(T)}
 }
-\frac{\langle \chi_{S}, \Delta_n \chi_{T} \rangle_{\mu,\eta}}{\sqrt{\mu(S)
 \mu(T)}
 } \right|\\
 &=\sup_{S, T \subseteq [0,1]} \frac 1 {\sqrt{\mu(S)\mu(T)}} \left|\langle \chi_{S}, (\Delta_m-\Delta_n) \chi_{T} \rangle_{\mu} \right|.
\end{align*}
where $S, T$ range over all integrable subsets of $[0,1]$ and we suppress the labelings $\theta,\eta$ which achieve the infininum.

We will show  that the convergence using the spectral distance defined under the $\mu$-norm  is equivalent to the convergence using  the discrepancy distance   in Section \ref{2norms}.  

\subsection{The equivalence of convergence using spectral distance and the discrepancy distance}

We will prove the following theorem which holds without any density restriction on the graph sequence. The proof applies  similar techniques in \cite{BL, BN, butler} to graphs of general
degree distributions.
\begin{theorem}
\label{2norm}
Suppose  the degree distributions $\mu_n$,  of a graph sequence $G_n,$ for $ n = 1, 2, \ldots$, converges to $\mu$.
The following statements are equivalent:

\noindent
(1) $G_n,$ for $  n = 1, 2, \ldots $, converges under the spectral distance.

\noindent
(2) $G_n, $ for $  n = 1, 2, \ldots $, converges under the $\disc$-distance.
\end{theorem}
\proof
Suppose that for a given $\epsilon > 0$, there exists an $N> 1/\epsilon$ such that for $n > N$, we have
\begin{eqnarray*}
 \| \mu_n-\mu \|_1 < \epsilon. \end{eqnarray*}
The proof for $ (1)$  
 $\Rightarrow$ 
 $(2)$ is rather straightforward and can be shown as follows:

Suppose (1) holds and we have, for $m, n > N$, $\|\Delta_m - \Delta_n\|_\mu < \epsilon$. Then,
\begin{align*}
\disc(G_m,G_n)
&=\sup_{S, T \subseteq [0,1]} \left| \frac{\langle \chi_{S}, (I-\Delta_m) \chi_{T} \rangle_{\mu_m}} 
{\sqrt{\mu_m(S)
 \mu_m(T)}
 }
-\frac{\langle \chi_{S}, (I-\Delta_n) \chi_{T} \rangle_{\mu_n}}{\sqrt{\mu_n(S)
 \mu_n(T)}
 } \right|\\
 &\leq
\sup_{S, T \subseteq [0,1]}\frac{1}{\sqrt{\mu(S)\mu(T)}} \left| \langle \chi_{S}, (\Delta_m-\Delta_n) \chi_{T} \rangle_{\mu}
 \right| + 2 \epsilon
\\
&=\sup_{S, T \subseteq [0,1]}\frac{1}{\|\chi_S\|_\mu\|\chi_T\|_\mu} \left| \langle \chi_{S}, (\Delta_m-\Delta_n) \chi_{T} \rangle_{\mu}\right|\\
&\leq \|\Delta_m-\Delta_n\|_\mu + 2 \epsilon\\
&\leq 3\epsilon.
\end{align*}

To prove (2) $\Rightarrow$ (1), if suffices to prove the Lemma \ref{ma22}  by substituting $A=\Delta_m$ and $B = \Delta_n$. 
The proof of Lemma \ref{ma22} will be given below.
\qed

\begin{lemma}
\label{ma22}

  Assume  an operator $M: [0,1] \times[0,1] \rightarrow \mathbb R$
  is the difference of two positive operators,
  $M=A-B$ where $A(x,y) \geq 0, B(x,y) \geq 0$ for all $x,y \in [0,1]$. Suppose   $A{\mathbf 1} = B{\mathbf 1} = \mathbf 1$ and suppose $M$ satisfies
  \begin{eqnarray}
  \label{assume}
\left|  \langle \chi_S, M\chi_T \rangle_\mu  \right| \leq
\gamma \sqrt{\mu(S) \mu(T)} 
\end{eqnarray}
for some  $\gamma >0$ for  any two integrable subsets  $S, T \subseteq [0,1]$. Then for any two integrable functions $f,g: [0,1] \rightarrow {\mathbb R}$, we have
\[ 
\langle f,  M g \rangle_\mu \leq 20 \gamma  \log (1/\gamma ) \| f\|_\mu \|g \|_\mu
\]
provided $\gamma < .02$.
\end{lemma}

To prove this, we  use ideas in  \cite{BL, BN, butler}. 
First we prove the following  fact:

\noindent
{\it Fact 1:}
For  an integrable function $f$ defined on ${[0,1]}$ with $\|f\|_\mu=1$, for any $\epsilon > 0$, there exists an $N(\epsilon)$ such that for any $n > N(\epsilon)$ there is a function $g$ defined on $[0,1]$ satisfying :\\
(1) $ \| g\|_\mu \leq 1$,\\
(2) $\|f-g\|_\mu \leq 1/4+\epsilon$,\\
(3) The  value $g(y)$ in the interval $( (j-1)/n,j/n]$ is a constant $g_j$ and $g_j$ is 
 of the form $(\frac 4 5)^j$ for integers $j$.\\
{\it Proof of Fact 1:}
Since $f$ is integrable, for a given $\epsilon$, we can approximate $\|f\|^2_\mu$ by
 a function $\bar f$, with $\bar f (x)=f_j$ in $((j-1)/n,j/n]$, such that 
 \[ \left| \int_0^1 f^2(x) \mu(x) -\sum_{j=1}^n \int_{(j-1)/n}^{j/n} f_j^2 \mu(x) \right| < \epsilon. \]
For   $\bar f=(f_j)_{1 \leq j \leq n}$, we define  $g=(g_j)_{1 \leq j \leq n} $ as follows. If $f_j=0$, we set $g_j=0$. Suppose $f_j \not = 0$, there is a unique integer $k$ so that $(4/5)^{k}< f_j \leq (4/5)^{k-1}$. We set $g_j=(\frac 4 5)^k $. Then 
\[ 0 < |f_j|-|g_j| \leq  (\frac 4 5)^{k-1} -(\frac 4 5)^k = \frac 1 4 (\frac 4 5 )^k < \frac 1 4 |f_j|,\]
which implies 
$\|f-g\|_\mu^2 \leq \epsilon + \sum_j \int_0^1|f_j-g_j|^2 {\mu}(x) \leq  \epsilon +\frac 1 {16} \sum_t |f_j|^2 \mu(x) = \frac 1 {16}+\epsilon$.
Fact 1 is proved.

Suppose there are  functions $f',g'$ satifying 
$
\|M\|_\mu$ $=|\langle f',Mg'\rangle_\mu|$ and $\|f'\|_\mu=\|g'\|_\mu=1$. If $f,g$ are functions such that $\|f\|_\mu, \|g\|_\mu \leq 1$ and $\|f'-g\|_\mu,\|f'-g\|_\mu \leq 1/4+\epsilon$, then 
\begin{eqnarray*}
\|M\|_\mu &=&|\langle f',Mg'\rangle_\mu|\\
&\leq&|\langle f,Mg\rangle_\mu|
+|\langle f'-f,My\rangle_\mu+\langle f',M(g'-g)\rangle_\mu|\\
&\leq& |\langle f,Mg\rangle_\mu|+\big(\frac 2 4+2 \epsilon\big) \|M\|_\mu.
\end{eqnarray*}
Thus, we have
\begin{eqnarray}
\label{eq3f}
\|M\|_\mu \leq (2+4 \epsilon) |\langle f, Mg \rangle_\mu|.
\end{eqnarray}

By applying Fact 1, we can choose $f,g$ of the following form: Namely, $
f =\sum_{t} (\frac 4 5)^t f^{(t)}$, where the  $f^{(t)}$ denotes the indicator function of  $x^{(t)}=\{x : \bar f(x)= (\frac 4 5 )^t\}$.   Similarly we write
 $g = \sum_t (\frac 4 5)^t g^{(t)}$, where the  $g^{(t)}$ denotes the indicator function of  $y^{(t)}=\{y : \bar g(y)= (\frac 4 5 )^t\}$. 
Now  we choose $\kappa= \log_{4/5} \gamma$ and we consider
\begin{align*}
\left| \langle f, M g \rangle_\mu
\right| &\leq \sum_{s,t} (\frac 4 5 )^{s+t} \left|\langle f^{(s)}, Mg^{(t)} \rangle_\mu \right|\\
&\leq \sum_{|s-t|\leq \kappa} (\frac 4 5 )^{s+t} \left|\langle f^{(s)}, Mg^{(t)} \rangle_\mu \right| \\
&~~~+ \sum_{s} (\frac 4 5 )^{2s+\kappa}\sum_t \left|\langle f^{(s)}, Mg^{(t)} \rangle_\mu \right| \\
&~~~+ \sum_{t} (\frac 4 5 )^{2t+\kappa}\sum_s \left|\langle f^{(s)}, Mg^{(t)} \rangle_\mu \right| \\
&= X + Y + Z.
\end{align*}
We now bound the three terms separately.  For a function $f$, we denote  $\mu(f)=\mu(\supp(f))$ to be the measure
 of the support of $f$.
 Using the assumption (\ref{assume}) for $(0,1)$-vectors and the fact that $f^{(s)}$'s are orthogonal (as well as the $g^{(t)}$'s), we have
\begin{align*}
X&=\sum_{|s-t|\leq \kappa} (\frac 4 5 )^{s+t} \left|\langle f^{(s)},Mg^{(t)} \rangle_\mu \right|\\
&\leq  \gamma  \sum_{|s-t|\leq \kappa} (\frac 4 5 )^{s+t}\sqrt{\mu(f^{(s)})\mu( g^{(t)})}\\
&\leq \frac{ \gamma } 2 \sum_{|s-t|\leq \kappa}\big( (\frac 4 5 )^{2s}\mu(f^{(s)})+(\frac 4 5 )^{2t}\mu( g^{(t)})\big)\\
&\leq \frac{ \gamma (2 \kappa+1)} 2 \big(\sum_{s} (\frac 4 5 )^{2s}\mu(f^{(s)})+\sum_{t} (\frac 4 5 )^{2t}\mu( g^{(t)})\big)\\
&\leq  \gamma (2 \kappa+1),
\end{align*}
since each term can appear at most $2 \kappa +1$ times.
For the second term we have, by using Lemmas \ref{ma1} and \ref{ma22}, the following:
\begin{align*}
Y&\leq \sum_{s} (\frac 4 5 )^{2s+\kappa}\sum_t \left|\langle f^{(s)}, Mg^{(t)} \rangle_\mu \right|\\
&\leq (\frac 4 5 )^{\kappa} \sum_{s} (\frac 4 5 )^{2s} \langle f^{(s)}, |(A-B)\sum_t g^{(t)}| \rangle_\mu \\
&\leq(\frac 4 5 )^{\kappa} \sum_{s} (\frac 4 5 )^{2s} \langle f^{(s)}, (A+B){\mathbf 1} \rangle_\mu \\
&\leq 2(\frac 4 5 )^{\kappa} \sum_{s} (\frac 4 5 )^{2s} \langle f^{(s)}, {\mathbf 1} \rangle_\mu \\
&\leq 2 (\frac 4 5 )^{\kappa}
\sum_s\mu(f^{(s)})\\
&\leq 2 (\frac 4 5 )^{\kappa}
\end{align*}
The third term can be bounded in a similar way.  Together, we have
\begin{align*}
\|M\|_\mu & \leq (2 +4\epsilon)\big( \gamma( 2 \kappa +1)+ 4 (\frac 4 5)^{\kappa})\\
& \leq (2+4\epsilon) \big( \gamma( 2\frac{ \log(1/\gamma)}{\log 5/4}+1)+ 4 \gamma \big)\\
&\leq \frac{4+8\epsilon }{\log(5/4)} \gamma \log(1/\gamma) + 8\gamma
\\
&\leq 20 \gamma \log (1/\gamma)
\end{align*}
as $n$ goes to infinity since $\frac {4} {\log 5/4} \approx 17.93$ and $\gamma < .02$.
This completes the proof of the theorem. 
\qed
}
\ignore{
\section{The geometry of graphlets}
\label{geo}
 First we need some definitions for related concepts
on a graph before we proceed to consider  the graph limit $\Omega$.  We will define the gradient operator on a graph and 
also define a metric by  using the Laplace operator
$\Delta$ and  the discrete heat kernel. 
\subsection{The gradient operator on a graph $G$}
Let $G$ denote an undirected weighted graph $G=(V, E)$ with the weighted adjacency matrix $A$.
We choose an arbitrary fixed   orientation of edges in $G$, denoted by $\hat{G}=(V, \hat{E})$ the gradient operator $\nabla$ is a $|E| \times |V|$ matrix  defined as follows: For $ u \in V$ and $e \in \hat{E}$, 
\[ \nabla  (e,u)= \begin{cases}
1 & \text{if $e=(u,w) \in \hat{E} $ for  some $w \in V$}, \\
-1 & \text{if $e=(w',u) \in \hat{E} $ for  some $w' \in V$}, \\
0& \text{otherwise}.
\end{cases}. \]
For any $f : V \rightarrow {\mathbb R}$ and an oriented edge $e=(u,v)$, we have
\[ \nabla f (e)= 
f(u)-f(v). \]

For  the   measure $\mu$ on $V $  with $\mu(u)= \frac{d_u}{\vol(G)}$, we define an extension of $\mu$ on $E$ such that  
for  an edge $\{u,v\} \in E$, we denote $\mu_e = \mu(u,v) = \frac{A(u,v)}{\vol(G)}$, satisfying
\begin{eqnarray*}
\langle \nabla f, \nabla f \rangle_\mu&=& \sum_{e} (\nabla f ~(e))^2 \mu(e)\\
&=& \langle f , \Delta f \rangle_\mu.
\end{eqnarray*}
This is the analog of the continuous version:
\begin{eqnarray}
\int_\Omega \| \nabla f \|^2 = \int_\Omega f \Delta f.
\end{eqnarray}
We remark that both the discrete and the continuous versions have their advantages. The discrete version is explicit but somewhat messy. The continuous is 
succinct but can be ambiguous with possible hidden dangers such as   taking limits, interchanging double integrals and dealing with infinity.

For a vertex $u$, the gradient  operator $\nabla$ can be viewed as a {\it frame} which consists of vectors from $u$ to each of the   neighbors of $u$.  A path $P$ 
that joins  $u$ to any 
other  node of the graph must travel through one
of its neighbors, say $v$, and the length of $P$ is just $|\nabla f(u,v)|$ plus the length of the rest
of the path. Although the term``frame" has been used for describing the  gradient fields  on manifolds,  the frame here is different in the sense that our frame does not span a vector field, for example. 
\subsection{The gradient operator for  graph limits}
Now suppose a graph sequence $G_n$ converges to  graphlets  $\Omega=\Omega(\mu,\Delta)$.
We will define the gradient $\nabla$ at an element $x$ of $\Omega$. 
By analogy with manifolds, the gradient indicates the rate of change when taking a `small' step along each direction.
For the graph limit $\Omega$,   the scaling factor (as the choice of  some   `small' step) is  obtained  by selecting $n$.  If $n$ is sufficiently
large, $G_n$ approaches $\Omega$.  After we pick (the scaling factor) $n$, we can then use the frames at vertices of $G_n$. 
Thus, the frame at a point $x$ in $\Omega$ is just   a sequence of frames at vertices $u_n$  each from  graph $G_n$ in  the graph sequence and $x$ is the limit of $u_n$.
 Let 
 $\eta_n$ denote the mapping from $V_n$ to  $\{1, 2, \ldots, n\}$.
 For $f : \Omega \rightarrow {\mathbb R}$,  the gradient at scale $n$, denoted by $\nabla_n$, satisfies
\begin{eqnarray*}
| \nabla_n f(x,y)|=\begin{cases}|f(x)-f(y)| &\text{if $A_n(x,y) \not = 0$}\\
0 &\text{otherwise,}
\end{cases}
  \end{eqnarray*}
where $A_n(x,y)= A_n(\eta_n(x), \eta_n(y))$.
  An illuminating example  is the graph sequence of   $n$-cycles $C_n$, which  converge to
$\Omega$, a circle of circumference $1$.  If we consider the graph sequence of cartesian products $C_n \times C_n$, the
gradient at each point at $\Omega$ has a frame consisting of four `directions'. 

If the degrees of $G_n$ are bounded above by some constant independent of $n$, then the frame at each point in $\Omega$ also has 
a bounded number of `directions'.  
If the degrees of $G_n$ are not bounded above by some constant, the frame can still be defined.
For a vertex $u_n$ in $G_n$,  let $N_n(u_n)$ denote the set of neighbors of $u$. For each point $x$ in $\Omega$, the frame
at $x$ is represented by the sequence  $\eta_n(N_n(u_n))$.  For example, for a dense graph sequence the graph limit $\Omega$ is finitely generated and
therefore 
the frame at $x $ in $\Omega$  consists of all the `directions' from $x$ to  some intervals of length  bounded below by a constant independent of $n$ as seen in Section  \ref{dense}.

In order to fully describe the geometry of the gradient of a general graph, we will need to consider graph embeddings into Hilbert spaces, in particular, the embeddings 
by using the heat kernels which we will discuss in the next  section.  With some appropriate universal embeddings $\psi$ and additional smoothness conditions, the `directions'  of a frame at $x$ as mentioned above can be represented as 
the limits of  sequences of vectors $\big(\psi(u_n)-\psi(v_n)\big) /\|\psi(u_n)-\psi(v_n)\|$  where the sequence $u_n$ converges to $x$ and $v_n$ are neighbors of $u_n$ in $G_n$.

Frames play crucial roles in the geometry of the graphlets. In addition to finding geodesic paths, the frames are pivotal in connection with
the   component analysis and dimension reduction
methods in data analysis, for example.

  \subsection{The heat kernel of a graph}
  The heat kernel  $H_t$ of
a graph $G$ is the fundamental solution for the heat equation for $t \geq 0$ (see \cite{cy}):
\begin{eqnarray}
\label{heat} \frac{\partial}{\partial t} H_t = - \Delta H_t. \end{eqnarray}
Consequently, $H_t$ can be written as:
\begin{eqnarray*}
H_t &=& e^{-t \Delta}
\\
&=& I - t \Delta + \frac {t^2} 2 \Delta^2 + \ldots + (-1)^k \frac{t^k}{k!} \Delta^k + \ldots\\
&=& \sum_{j=0}^{n-1} e^{-t \lambda_j} P_j
\end{eqnarray*}
where $0=\lambda_0\leq \lambda_1 \leq \ldots \leq \lambda_{n-1}$ are eigenvalues of ${\mathcal L}=D^{1/2} \Delta D^{-1/2}$,
$\phi_j$'s denote the associated eigenvectors of  $\mathcal L$ forming  an orthogonal basis, and $P_j$ are the $j$th projection. If we treat
$\phi_j$ as a row vector,
then we can write $P_j=  D^{-1/2} \phi_j^*\cdot  \phi_j D^{1/2}$ where $f^*$ denotes the transpose of $f$. In particular $P_0 = {\mathbf 1}^* \cdot \pi$.
It can be easily checked (see \cite{cy}) that 
\begin{eqnarray}
\label{h1} H_t(u,v) \geq 0~~~\text{ and }\sum_v H_t(u,v)=1. 
\end{eqnarray}
\ignore{
For a graph limit $\Omega$ and $t \geq 0$, we can define the $t$-distance to be
\begin{eqnarray*}
(\dist_t(x,y))^2&= & -t \log H_t(u,v).
\end{eqnarray*}
The disadvantage of this definition is its dependency on $t$. However, as seen in the example of graphs limits of $n$-cycles, the distances
in each graph of a graph sequence depends on the number of vertices. The relations among the distances in each graphs and the distance in the graph limits can be quite different for different examples of graph limits.
The advantage of this
 definition is the fact that this is an adaption  of the formula for evaluating the heat kernel of a  finite dimensional Riemannian manifolds with upper bounded diameter
and lower bounded Ricci curvature. This allows some of the methods and concepts in differential geometry to be used for graphs (see \cite{cgy}).
A general graph can be quite different from a discretization of such Riemannian manifolds.
}
For a given vertex $u$, we consider the function  $H_{t,u}$ defined by $H_{t,u}(v)=H_t(u,v)$ for all vertices $v$. 
Here are some useful facts concerning $H_{t,u}$.
\begin{lemma}
\label{m1} For  vertices $u,v$ in $G$, we have\\
\begin{align*}
(i)&~~~ H_{t,u}(v) =d_u \sum_{x } \frac{ \hkr_{t/2, u}(x)}{d_x} \sum_{y} \frac{ \hkr_{t/2, v}(y) }{d_y}, \\
(ii)&~~~\frac{\partial}{\partial t} \hkr_{t, u} (v) = -d_v \sum_{x
\sim y} \bigg(\frac{\hkr_{t/2, u}(x)}{d_x} - \frac{\hkr_{t/2,
u}(y)}{d_y}\bigg)\bigg(\frac{\hkr_{t/2, v}(x)}{d_x} - \frac{\hkr_{t/2,
v}(y)}{d_y}\bigg) 
\\
& \text{where the sum is over all unordered pairs of vertices $\{x,y\}$ as edges in
$G$.  In particular,}\\
&~~~~\frac{\partial}{\partial t} \hkr_{t, u} (u) = -d_u \sum_{x
\sim y} \bigg(\frac{\hkr_{t/2, u}(x)}{d_x} - \frac{\hkr_{t/2,
u}(y)}{d_y}\bigg)^2 \\
(iii)&~~~ H_{t,u}(u)-\pi(u) \geq \sum_{x } \frac{ \big(\hkr_{t/2, u}(x)-\pi(x) \big)^2}{d_x} \geq 0
\end{align*}
\end{lemma}
\proof 
Let $\chi_u$ denote the characteristic function of $u$.  For (i), we see that
\begin{eqnarray*}
H_{t,u}(v) &=& \chi_u H_{t/2} D^{-1} H^*_{t/2} D \chi_v^*\\
&=&
d_v \sum_{x } \frac{\hkr_{t/2, u}(x) }{d_x} \sum_{y } \frac{\hkr_{t/2, v}(y) }{d_y} 
\end{eqnarray*}

To prove (ii), we use the heat equation  (\ref{heat}) and we have
\begin{eqnarray*}
\frac{\partial}{\partial t} \hkr_{t,u}(v) &=&\chi_u
\frac{\partial}{\partial t} H_t \chi_v^*
\\
&=& -\chi_u(I-D^{-1}A)H_t \chi_v^*\\
&=& -\chi_u H_{t/2}(I-D^{-1}A)D^{-1}H^*_{t/2} D \chi_v^*\\
&=& -\chi_u H_{t/2} D^{-1} (D-A) D^{-1} H^*_{t/2}\chi_v^* d_v\\
&=&-d_v \sum_{x \sim y}  \bigg(\frac{\hkr_{t/2, u}(x)}{d_x} - \frac{\hkr_{t/2,
u}(y)}{d_y}\bigg)\bigg(\frac{\hkr_{t/2, v}(x)}{d_x} - \frac{\hkr_{t/2,
v}(y)}{d_y}\bigg) .
\end{eqnarray*}
Here we use the fact  (see \cite{ch0}) that for any $f, g ~:~ V
\rightarrow {\mathbb R}$, we have
\[ f(D-A)g^* = \sum_{x \sim y} (f(x)-f(y))(g(x)-g(y)). \]

For (iii), we have
\begin{eqnarray*}
 \hkr_{t,u}(u) -\pi(u) &=&\chi_u
 (H_t -P_0)\chi_u^*
\\
&=& \chi_u (H_{t/2}-P_0)D^{-1}(H^*_{t/2}-P^*_0) D \chi_i^*\\
&=& \chi_u H_{t/2}  D^{-1} H^*_{t/2}\chi_u^* d_u\\
&=&d_u \sum_{x } \frac{ \big(\hkr_{t/2, u}(x)-\pi(x) \big)^2}{d_x}\\
&\geq& 0
\end{eqnarray*}
\qed

For any function $f$ defined on $V$, we write  $H_{t,f}=\sum_{v \in V} f(v) H_{t,v}$.
 For a subset $S$ in $G$  we define a function $f_S$ by
\begin{eqnarray*}
 f_S(u)&=& \begin{cases} \frac{d_u}{\vol(S)}&  \text{if $u \in S$,}\\
0 & \text{otherwise.} \end{cases}
\end{eqnarray*} 
 For a subset $S$ of vertices in $G$, the {\it edge boundary} of $S$, denoted by $\partial S$
 is defined by
 \[ \partial S = \{ \{u,v\} \in E~:~ u \in S~ \mbox{and} ~v \not \in
 S \}. \]
 Let $\bar{S}=V \setminus S$ denote the complement of $S$. Clearly,
 $\partial S = \partial \bar{S}$. The {\it Cheeger ratio } of $S$,
 denoted by $h_S$, is defined by
 \begin{eqnarray}
 \label{chee}
  h(S) = \frac{|\partial S|}{\min \{ \vol(S), \vol(\bar{S})\}} 
  \end{eqnarray}
 and the {\it Cheeger constant} of a graph $G$ is defined by
 $ h_G = \min_{S \subseteq V} h(S).$

\begin{lemma}
For a subset $S$ in $G$, we have
\label{m2}
\begin{align*}
(i)&~~~\frac{\partial}{\partial t} \hkr_{t, f_S} (S) = -\vol(S) \sum_{x
\sim y} \bigg(\frac{\hkr_{t/2, f_S}(x)}{d_x} - \frac{\hkr_{t/2,
{f_S}}(y)}{d_y}\bigg)^2\\
& \text{where the sum is over all unordered pairs of vertices $\{x,y\}$ as edges in
$G$.}\\
(ii)&~~~ \frac{\partial}{\partial t} \hkr_{t, f_S} (S)  \geq -h_S.
\end{align*}
\end{lemma}
\proof 
We use the heat equation and Lemma \ref{m1} (ii)  as follows:
\begin{align*}
\frac{\partial}{\partial t} \hkr_{t, f_S} (S) &= \frac{\partial}{\partial t} \sum_{u \in S}f_S(u) \hkr_{t, u} (S) = \frac{\partial}{\partial t} \sum_{u,v \in S}f_S(u) \hkr_{t, u} (v) \\
&=-
 \sum_{x
\sim y}\sum_{u \in S}  f_S(u) \bigg(\frac{\hkr_{t/2, u}(x)}{d_x} - \frac{\hkr_{t/2,
u}(y)}{d_y}\bigg)\bigg(\sum_{v\in S} d_v\big(\frac{\hkr_{t/2, v}(x)}{d_x} - \frac{\hkr_{t/2,
v}(y)}{d_y}\big)\bigg) \\
&=-
 \sum_{x
\sim y}\bigg(\frac{\hkr_{t/2, f_S}(x)}{d_x} - \frac{\hkr_{t/2,
f_S}(y)}{d_y}\bigg)\bigg(\sum_{v\in S} d_v\big(\frac{\hkr_{t/2, v}(x)}{d_x} - \frac{\hkr_{t/2,
v}(y)}{d_y}\big)\bigg) \\
&=-
\vol(S)\sum_{x
\sim y}\bigg(\frac{\hkr_{t/2, f_S}(x)}{d_x} - \frac{\hkr_{t/2,
f_S}(y)}{d_y}\bigg)\bigg(\sum_{v\in S} f_S(v)\big(\frac{\hkr_{t/2, v}(x)}{d_x} - \frac{\hkr_{t/2,
v}(y)}{d_y}\big)\bigg) \\
&=-
\vol(S)\sum_{x
\sim y}\bigg(\frac{\hkr_{t/2, f_S}(x)}{d_x} - \frac{\hkr_{t/2,
f_S}(y)}{d_y}\bigg)^2
\end{align*}
To prove ({\it ii}),  we checked that
$\frac{\partial^2}{\partial t^2} \hkr_{t, f_S}(S) \geq 0$, which implies
\begin{eqnarray*}
\frac{\partial}{\partial t} \hkr_{t, f_S}(S)& \geq&
\frac{\partial}{\partial t} \hkr_{t, f_S}(S) ~{\bigg |}_{t=0}\\
&=& -\frac{|\partial S|}{\vol(S)} = -h_S
\end{eqnarray*}
\qed
\begin{theorem}
\label{t3}
For  a subset $S$ in $G$ with $h(S)=h$, we have
 \begin{eqnarray}
 \label{46}
 H_{t,f_S} (S) \geq e^{-th}.\end{eqnarray}
 \end{theorem}

\proof
We consider $F(t)=  -\log H_{t,f_S}(S)$. From (i) in Lemma \ref{m2}, we have
\begin{eqnarray*}
\frac{\partial}{\partial t} F(t) &=& \frac{-
\frac{\partial}{\partial t} \hkr_{t, f_S}(S)
}{H_{t,f_S}(S)}
\leq h.
\end{eqnarray*}
This implies
\[ \log H_{t, f_S}(S) \geq- \int_0^t F(s)ds = -F(t) \geq
- ht,\]
using the fact that $H_{0,f_S}(S)=1$.
Therefore, we have
\[ H_{t,f_S}(S) \geq  e^{-ht}.\]
We have proved (\ref{46}).
\qed

 \begin{theorem}
 \label{t33}
 For  a subset $S$ in $G$ with $h(S)=h$,
and for any given $\epsilon, \delta > 0$,  there is a subset $R$ with $\vol(R) \geq (1-\delta) \vol(S)$ such that
for any $u$ in $R$, we have
\begin{eqnarray}
\label{m333xx}H_{t,u}(S) \geq 1-\epsilon \end{eqnarray}
provided $t \leq \epsilon \delta/h$.

In particular,  there is a subset $T$ with $\vol(T) \geq  \vol(S)/2$ such that
for any $u$ in $T$, we have
\begin{eqnarray}
\label{m33}H_{t,u}(S) \geq e^{-2th}.
 \end{eqnarray}

\end{theorem}

\proof
We consider $R$ consisting of all $u$ satisfying (\ref{m33}).  
Suppose $\vol(R) \leq (1-\delta) \vol(S)$. From Theorem \ref{t3}, we have 
\begin{eqnarray*}
1-e^{-th} &\geq& H_{t, f_S}(\bar{S})\\
&=& \sum_{v \in S} \frac{d_v}{\vol(S)} H_{t, v}(\bar{S})\\
&\geq& \epsilon \frac 1 {\vol(S)} \sum_{u \not \in R} d_u\\
&\geq & \epsilon\frac{\vol(\bar{R})}{\vol(S)}) \geq \epsilon \delta.
\end{eqnarray*}
which contradicts the assumption that $t < \epsilon \delta/h$. From  (\ref{h1}), we have $H_{t,f_S}(S)=1-H_{t,f_S}(\bar S)$. Thus we conclude that $\vol(R) \geq (1-\delta) \vol(S)$ as desired.

For the special case of $\epsilon = 1-e^{-2th}$, we can choose $\delta < 1/2$ and therefore (\ref{m33}) holds.
\qed

For a finite-dimensional Riemannian manifold with appropriate conditions on the growth rate of  neighborhoods, the heat kernel estimate is usually of the following form (see \cite{ly}):
\begin{eqnarray*}
H_t(x,y) \sim e^{-\dist(x,y)^2/t}\end{eqnarray*}
A natural question is to derive  similar estimates for the heat kernels for   graphs and  graphlets. However, there are a number of obstacles.
A general graph can be quite different from a discretization of  a $d$-dimensional Riemannian manifold. The graph distance increases  when the discretizations
approach the limit. The rate of changes for graph distances can be quite different for various graphlets. Nevertheless,
as we see from the above theorem, the heat kernel can be related to the Cheeger ratio  in  an analogous exponential format.

\subsection{The heat kernel for graphets}
Suppose a graph sequence $G_n,$ for $  n = 1, 2, \ldots $,  converges to  the graphlet $(\Omega, \Delta)$. We can use the heat kernel to define a family of embeddings with a parameter $t\geq 0$.
For each $t \geq 0$ and $ for $  n = 1, 2, \ldots $,  the embedding for a graph $G_n$  with vertex set $V_n$
 is obtained  by mapping  each vertex $u$ in  $V_n$  to the heat kernel function  $H^{(n)}_{t,u}  :  V_n \rightarrow [0,1] $ as defined in
the previous section. The function $H^{(n)}_{t, u}$ can be extended to the domain $ [0,1]$ by  defining for $y \in [0,1]$,
$H^{(n)}_{t,u}(y)=nH^{(n)}_{t,u} (v) $ if $y $ is in the interval $ I_v$ associated with vertex $v$ in $V_n$. In other words, we have for $H^{(n)}_{t, u}:
[0,1] \rightarrow [0,1]$:
\begin{eqnarray*}
 \int_{I_v} H^{(n)}_{t,u}(y) dy & =&H^{(n)}_{t,u}(v) \\
\text{and} ~~~~~~~~ \int_0^1 H^{(n)}_{t,u}(y) dy & =&\sum_{v \in V_n} H^{(n)}_{t,u}(v)=1 .
\end{eqnarray*}
Hence, $H^{(n)}_{t,u}$ can be viewed as  a measure defined on $[0,1]$. For $x $ in $\Omega$ labelled by $[0,1]$, we have
\begin{eqnarray*}
H_{t,x}(y) = \lim_{n \rightarrow \infty} H^{(n)}_{t,u_n}(y)
\end{eqnarray*}
where $ x \in I_{u_n} $ with $u_n \in V_n$ and therefore ${u_n}$ converges to $x$.

 For   $G_n$,   the family of $H^{(n)}_{t,u}, u \in V_n$ can  in general  be of high dimensions and for $x $ in $\Omega$, the family of $H_{t,x}$ can be more complex.
 Nevertheless,   there are several possibilities for such embedding families to have 
 succinct representations. Examples include the cases that (i)~   $\Omega$ is finitely generated; or (ii)~ $\Delta$ can be approximated by using the first $k$ eigenfunctions; or (iii) 
 with the appropriate choice of $t$, the embeddings $H_{t,x}$ can be locally focused (i.e., with almost all support) on 
an induced subgraph of a subset  $S$ of vertices  with small Cheeger ratio. This will be further discussed in Section \ref{cheeger}.

We remark that in spectral geometry, a main method  to compare  two different Riemannian manifolds is to first embed them into the same Hilbert space (see Gromov \cite{g} and Ber\'ard \cite{bbg}). Usually, such  embeddings  are required to preserve distances. However, for graphlets, the setting is
quite different. For example, the graph sequence consisting of paths with increasing and diverging diameters converge to $[0,1]$ of diameter $1$. Therefore, it can not be expected
that any notion of  graph distance in a graph sequence can be  amenable to preserves distance in such graph embeddings.  

Instead, we will show that the embeddings using the heat kernel of the graphlets preserve   Cheeger ratios in
a similar way as the Cheeger inequalities using eigenfunctions.  We note that the general isoperimetric problem of finding a subset with the smallest Cheeger ratio is a difficult 
problem since
there are exponentially many subsets in a graph. Previously, the spectral partitioning algorithms use eigenfunctions to restrict the choices of subsets to
a linear number  while the best Cheeger ratio among the restricted subsets is still within a quadratic factor of the optimal (see \cite{ ch0}). Here we will use the heat kernel instead of
the eigenfunctions in a way that the performance guarantee remains the same. The additional advantage of using the heat kernel is the fact that the parameter $t$ can be chosen so that the heat kernel can be approximated by
a truncated version with small  support depending only on $t$.  A local partitioning algorithm was previously given by using the heat kernel instead of the usual
PageRank (which is used to quantitatively rank vertices in a graph  \cite{hk, hk1}).  

For a subset $S$ of  $\Omega$ with $\mu(S) \leq 1/2$,  the Cheeger ratio of $S$ can be defined as
\begin{eqnarray}
 h(S) &= & \frac{\mu(\partial(S))}{\mu(S)}\\
&=&\frac{\langle \chi_S, \Delta \chi_S \rangle_\mu}{\mu(S)} 
\end{eqnarray}
which is consistent with the Cheeger ratios defined in the graphs.

From Theorems \ref{t3} and  \ref{t33},  using the fact that $\Omega$ is the graph limit of the graph sequence, we  have the following result.

 \begin{theorem}
 \label{t333}
 For  a subset $S$ in $\Omega$ with $h(S)=h$,
the following holds:

\begin{description}
\item[(a)]
\begin{eqnarray*}
\int_S\frac{ \mu(x)}{\mu(S)} \int_S  H_{t,x}(y) \geq e^{-th}
\end{eqnarray*}
(Note that here $H_{t,x}$ is taken as a measure.)
\item[(b)]
For given $\epsilon, \delta > 0$,  there is a subset $R$ with $\vol(R) \geq (1-\delta) \mu(S)$ such that
for any $x$ in $R$, we have
\begin{eqnarray}
\label{m333xxx}
 \int_{S} H_{t,x}(y) \geq 1-\epsilon \end{eqnarray}
provided $t \leq \epsilon \delta/h$.
\item[(c)] There is a subset $T$ with $\vol(T) \geq  \mu(S)/2$ such that
for any $x$ in $T$, we have
\begin{eqnarray}
\label{m333x}
\int_S H_{t,x}(y) \geq e^{-2th}.
 \end{eqnarray}
\end{description}
\end{theorem}

To establish  Cheeger-type inequalities  we will need to derive upper  bounds for $H_{t,x} (y)$'s
which  will be further discussed in Section \ref{cheeger}.

\ignore{
\subsection{Graph  embeddings preseving the Cheeger ratio using the heat kernel }
In a graphlets $(\Omega, \Delta)$,  each $x$ is associated with its heat kernel function $H_{t,x}$ which can be viewed as an embedding of $\Omega$ to $[0,1]^{[0,1]}$.
The functions $H_{t,x}$ has many strong properties, in particular, in preserving the Cheeger ratios.

For  a funciton  $f~:~\Omega \rightarrow {\mathbb R}$ and  real value $r \in [0,1]$, the {\it segment}  $S_{r}$  of $f$ denotes a subset  of $\Omega$
with $\mu(S_r)=r$ so that $f(x)/\mu(x) \geq f(y)/\mu(y)$ for any $x \in S_{r}$ and $y \not \in S_{r,f}$.  In other words, if we arrange elements  $x$ of $\Omega$ in the 
decreasing order of $f(x)/\mu(x)$ and select  elements having large values to form a set $S_{r}$  with  $\mu(S_{r})=r$.   For a given $s$, we consider
 the least Cheeger
ratio $h(S_{r})$ over all  segments $S_r$, $r \leq s$, denoted by $\kappa_{r,f}$, called  the $r$-local Cheeger
ratio determined by  $f$.

We will  examine local Cheeger ratio determined by   the heat kernel     functions. The idea of the proof is similar to the proof of the Cheeger inequality in \cite{ch0}. A weaker  discrete version of (\ref{echee1}) was given
in \cite{hk1} with a different proof.
\begin{theorem}
\label{tchee1}
For a subset $S \subset \Omega $ with the Cheeger ratio $h(S)=h$ and $\mu(S)=s \leq 1/4$, there is a subset $T$ of $S$ with $\mu(S) \geq s/2$ such that for $x
\in T$,  the heat kernel   $H_{2t,x}$ satisfies:
\begin{eqnarray}
\label{echee1}
-   \frac { \frac{\partial}{\partial t} H_{2t,x}(x)}{H_{2t,x}(x)} \geq 
\frac {(1-\delta)} 2 \kappa_{t,x,2s}^2 
\end{eqnarray}
provided $\delta = 1-e^{-4th} \leq 1/4$ where $\kappa_{t,x,2s} $ is the($2s$)-local Cheeger ratio  determined by $H_{t,x}$.
\end{theorem}

\proof
For $x$ in $T$, we define 
\[  f_t(y)=\frac{H_t(x,y)}{\mu(y)}. \]

\noindent
{\it Claim A:}
For $\epsilon=1-e^{-2th}$,  the heat kernel  rank function  $f_t$ satisfies the following:
\begin{align}
(a)~~&-\frac{\partial}{\partial t} f_{2t} (x) = 2 \int_{\Omega} |\nabla f_t(e)|^2 \mu_e, \label{m4a}\\
(b)~~&~~~~~~~f_{2t}(x) = \int_{\Omega} f_t^2(y) \mu(y), \label{m4b}\\
(c)~~&\text{For $r \geq s$,} ~~\int_{ S_{r}} f_t(y)\mu(y)  = 1-\epsilon, \label{m4c} \\
(d)~~&\text{For $r \geq 2s$,}~~\inf_{S_{r}} f_t(y)\mu(y) \leq \frac{2\epsilon}{r}.\label{m4d}
\end{align}

\noindent
{\it Proof of the Claim:}\\
($a$) and ($b$) are consequences of Lemma \ref{m1} (ii) and (iii).
Theorem \ref{t33} implies ($c$). Now consider $S_r$ with $r \geq 2s$ and we have
\begin{eqnarray*}
\inf_{y \in S_{r}}f_t(y) \leq \frac 2 {r } \int_{ S_{r}\setminus S_{r/2}}f_t(y)\mu(y)  \leq 
\frac{2\epsilon}{r} .
\end{eqnarray*}
The Claim is proved.

Using the above claim, we have the following.
\begin{eqnarray}
 \frac {-\frac{\partial}{\partial t} f_{2t,x}(x)}{f_{2t,x} (x)}
&=& \frac{2 \int_\Omega |\nabla f_t(e)|^2 \mu_e}{\int_\Omega f_t^2(y) \mu(y)}.\label{nn}
\end{eqnarray}
For a graph $G$, we can define the operator $\bar \nabla $ for $u,v$ with $A(u,v) \not = 0$, by
\[ \bar \nabla f(u,v)= f(u) + f(v)= 2 f(u)- \nabla f(u,v). \]
The operator $\bar \nabla$ can be extended to the graph limit $\Omega$, satisfying:
\begin{eqnarray}
 |\nabla f(e) \bar \nabla f(e)|&= &|\nabla f^2(e)|,\label{nn1}\\
 \int_\Omega |\bar \nabla f(e)|^2 \mu_e &=& 2 \int_\Omega f^2(y) \mu(y)- \int_\Omega | \nabla f(e)|^2 \mu_e  \nonumber\\
& \leq& 2 \int_\Omega f^2(y) \mu(y). \label{nn2}
 \end{eqnarray}
Returning to (\ref{nn}) and using (\ref{nn1}) and (\ref{nn2}), we have
\begin{eqnarray*}
 \frac {-\frac{\partial}{\partial t} f_{2t}(x)}{f_{2t} (x)} &=&  \frac{2 \int_\Omega |\nabla f_t(e)|^2 \mu_e\int_\Omega |\bar \nabla f_t(e)|^2 \mu_e
}{\int_\Omega f_t^2(y) \mu(y)\int_\Omega |\bar \nabla f_t(e)|^2 \mu_e}\\
&\geq&\left(\frac{\int_\Omega |\nabla f_t^2(e)| \mu_e
}{\int_\Omega f_t^2(y) \mu(y)}\right)^2\\
&\geq&\left(\frac{\int_0^{2s}dr\int_{\partial(S_r)} |\nabla f_t^2(e)| \mu_e
}{\int_\Omega f_t^2(y) \mu(y)}\right)^2\\
&=&\left(\frac{\int_0^{2s}\mu(\partial(S_r))~ d f_t^2(r)
}{\int_\Omega f_t^2(y) \mu(y)}\right)^2\\
&\geq&\left(\frac{\int_0^{2s}\kappa_{t,x,2s} \mu(S_r))~  d f_t^2(r)
}{\int_\Omega f_t^2(y) \mu(y)}\right)^2\\
&\geq&\kappa_{t,x,2s}^2 \left(\frac{\int_0^{2s}f_t^2(r)~ d \mu(S_r)) 
}{\int_\Omega f_t^2(x) \mu(x)}\right)^2\\
&=&\kappa_{t,x,2s}^2 \left(\frac{\int_{S_{2s}} \big(f_t^2(y) -f_t^2[r]) \mu(y) 
}{\int_\Omega f_t^2(y) \mu(y)}\right)^2\\
\end{eqnarray*}
We now use (\ref{m4c} ) and (\ref{m4d})  to obtain:
\begin{eqnarray*}
f^2[2s] 2s &\leq& \frac{2 \epsilon^2} s\\
&\leq& \frac{2 \epsilon^2} {(1-\epsilon)^2 s}\bigg( \int_{S_{2s}} f_t(y) \mu(y) \bigg)^2\\
&\leq& \frac{4 \epsilon^2} {(1-\epsilon)^2 } \int_{S_{2s}} f_t^2(y) \mu(y) 
\end{eqnarray*}
by using H\"{o}lder's inequality.
Therefore we have
\begin{eqnarray*}
 \frac {-\frac{\partial}{\partial t} f_{2t}(x)}{f_{2t} (x)}
 &\geq&\kappa_{t,x,2s}^2 \left(\frac{\int_{S_{2s}} f_t^2(y)\mu(y) -f_t^2[2s] 2s
}{\int_\Omega f_t^2(y) \mu(y)}\right)^2\\
&\geq &((1-\epsilon)^2-4\epsilon^2) \kappa_{t,x,2s}^2\\
&\geq &(1-\delta) \kappa_{t,x,2s}^2
\end{eqnarray*}
 by our assumption on $\delta$.
Theorem \ref{tchee1} is proved.
\qed

\begin{theorem}
\label{tchee2}
For a subset $S$ with $\mu(S)=s$ and $h(S)=h$, there is a subset $T$ of $S$  with $\mu(T) \geq s/2$ so that
for   $\delta = 1-e^{-4th} \leq 1/4$, the following holds:
\begin{align*}
(i) &~ \text{For $x \in T$},~~
 e^{-2th} \leq \int_{S} H_{t,x}(y)  \leq \sqrt{\frac {s}{\mu(x)} }e^{-{t\kappa_{t,x,2s}^2(1-\delta)}/2}\\
 &\text{where where $\kappa=\kappa_{t,x,2s} $ denotes the minimum $2s-$local Cheeger ratio  using }\\
 &\text{ segments $S_r$, $r \leq 2s$, determined by $H_{t,x}$.}
\\
(ii) &~~
\frac 1 2  e^{-4th} \leq \int_T \frac{\mu(x)}s \Big( \int_{S_s} H_{t,x}(y)\Big)^2 \leq \frac{1}{\mu(T)} \int_T H_{2t,x}(x) \mu(x)  \leq e^{-t\kappa_{t,2s}^2(1-\delta)}\\
 &\text{where where $\kappa=\kappa_{t,2s} $ denotes the minimum $2s-$local Cheeger ratio  using }\\
 &\text{ segments $S_r$, $r \leq 2s$, determined by $H_{t,x}$ over all $x$ in $T$.}
 \end{align*}
\end{theorem}

We note that in ($i$), there is a term $\mu(S)/\mu(x)$ which contributes a term of  $O(\log n)$ when we compare $ht$ and $\kappa_{t,x,2s}$ by taking logarithms
of both sides of the inequality. This works for the discrete cases of finite graphs but is rather restrictive for the general case.
In order to getting rid of this untidy factor, we consider the expected case in ($ii$) by integrating over all $x$ in $T$.

\noindent
{\it Proof of Theorem \ref{tchee2}:}\\
By solving the inequalities in (\ref{echee1}), we have
\begin{eqnarray}\label{chee3}
\hkr_{2t,x}(x) \leq   e^{-(1-\delta)t \kappa_{t,x,2s}^2}.
\end{eqnarray}

To prove the upper bound, we use  Lemma \ref{m1} ($iii$):
\begin{eqnarray*}
H_{2t,x}(x)&\geq&  \mu(x) \int_\Omega \frac{H_{t,x}(y)^2}{\mu(y)}dy\\
&\geq&\mu(x) \int_{ S_s}\big(H_{t,x}(y)/\mu(y)\big)^2\mu(y)dy\\
&\geq& \frac{\mu(x)} s \bigg( \int_{ S_s}H_{t,x}(y)dy\bigg)^2
\end{eqnarray*}

Together with (\ref{chee3}),  we have
\begin{eqnarray*}
 e^{-t\kappa_{t,x,s}^2(1-\delta)}& \geq& H_{2t,x}(x) \geq 
\frac{\mu(x)} s\Big( \int_{S_s} H_{t,x}(y) \Big)^2 \\
&\geq& \frac{ \mu(x)e^{-4th}}{ s}. 
\end{eqnarray*}
where $\epsilon$ is as defined  in the proof of Theorem \ref{tchee1}.
This implies
\[ e^{-2th} \leq \int_{S_s} H_{t,x}(y)dy  \leq \sqrt{\frac {s}{\mu(x)} }e^{-{t\kappa_{t,x,2s}^2(1-\delta)}/2 }\]
and ($i$) is proved.
To prove ($ii$), we consider
\[ F(t)=  \frac{1}{\mu(T)} \int_T H_{2t,x}(x) \mu(x) . \]
By Theorem \ref{tchee1}, we have
\begin{eqnarray*}
-\frac{\partial }{\partial t} \log F(t) &=&
\frac{ -\frac{1}{\mu(T)} \int_T \frac{\partial}{\partial t} H_{2t,x}(x) \mu(x)}{F(t)} \\
 &\geq&\frac{\frac{1}{\mu(T)} \int_T (1-\delta) \kappa^2_{t,x,2s} H_{2t,x}(x) \mu(x)}{F(t)} \\
  &\geq&\frac{\frac{1}{\mu(T)} \int_T (1-\delta) \kappa^2_{t,,2s} H_{2t,x}(x) \mu(x)}{F(t)} \\
   &\geq&(1-\delta) \kappa^2_{t,,2s}.
\end{eqnarray*}
By solving the inequality with  $F(0) =1$,
we have
\[ F(t) \leq e^{-(1-\delta) \kappa^2_{t,2s}t}. \]
In the other direction, we have
\begin{eqnarray*}
F(t) &\geq&\frac{1}{\mu(T)} \int_T \Big( \frac{H_{2t,x}(x)}{\mu(x)}\Big) \mu(x)^2\\
&=&\frac{1}{\mu(T)} \int_T \mu(x)^2\Big( \int_{\Omega} \frac{H_{t,x}(y)^2}{\mu(y)}\Big) \\
&\geq&\frac{1}{\mu(T)} \int_T \mu(x)^2\Big( \int_{S_s} \frac{H_{t,x}(y)^2}{\mu(y)}\Big) \\
&\geq&\frac{1}{\mu(T)} \int_T \mu(x)^2\frac{1}{s} \Big( \int_{S_s} H_{t,x}(y)\Big)^2 \\
&\geq&\frac{1}{2} \int_T\Big( \frac{\mu(x)}{\mu(T)}\Big)^2 e^{-4th}\\
&\geq&\frac{e^{-4th}}{2}\Big( \int_T \frac{\mu(x)}{\mu(T)}\Big)^2 \\
&\geq&\frac{e^{-4th}}{2}
\end{eqnarray*}
as desired.
\qed

We remark that Theorems \ref{tchee1} and \ref{tchee2}  provide an algorithm for finding a local cut together with a performance guarantee within a quadratic factor of the optimum.
The above inequality has a similar flavor as the Cheeger inequality  which is the base for the  spectral partitioning algorithm using eigenfunctions. 
However  eigenfunctions do not specify the size of the parts to be separated.
}
}
\section{Examples of graphlets }
\label{ex}

We here consider several examples of   graphlets $\mathcal{G}(\Omega, \Delta)$ which are formed from graph sequences  $G_n, $ for $  n = 1, 2, \ldots $. We will illustrate that
the eigenfunctions of $\Delta$ can be used to serve as a universal basis for  graphs $G_n$. The discretized adaptation  of graphlets will be called ``{\it lifted} graphlets''  for $G_n$, which
 are good approximations for the actual eigenfunctions in $G_n$ as $n $ approaches infinity. In some cases,
the lifted graphlets using  $\Delta$  are fewer 
than  the number of eigenfunctions  in $G_n$ and  in other cases,  there are more eigenfunctions of $\Delta$ than those of $G_n$.   We will  describe a universal basis for $G_n$,
as the union of two parts,  including the {\it primary} series (which are  the lifted graphlets)  and {\it complementary} series  (which are
  orthogonal to the primary series). In a way,  we will see that the primary series  captures the main structures of the graphs while
the complementary series  reflect the ``noise'' toward the convergence.  Before we proceed, some clarifications are in order.
\begin{itemize}
\item
The notion of orthogonality  refers to the usual inner product unless we specify other modified inner products such as the $\mu$-product  $\langle \cdot, \cdot \rangle_\mu$ or  the 
$\mu_n$-product. Sometimes, it is more elegant to use eigenfunctions that are orthogonal under the $\mu$-norm. However,  when
we are dealing with a finite graph  $G_n$  in  a graph sequence, we sometimes wish to  use only what we know about the finite graph $G_n$ and perhaps
the existence
of the limit without
the knowledge of the behavior of the limit (such as $\mu$). In such cases, we will use the usual inner product.
\item
The universal bases  are for approximating the eigenfunctions of the normalized Laplacian of $G_n$. 
In a graph $G_n$, its  Laplace operator $\Delta_n=I - D_n^{-1} A_n$ is not symmetric in general since
the left and right eigenfunctions are not necessarily the same. The universal basis is used for  approximating
the eigenfunctions of the normalized Lapalcian 
\[ {\mathcal L}_n = I- D_n^{-1/2} A_nD_n^{-1/2}, \]
which is equivalent to $\Delta_n$ and is  symmetric. Thus, ${\mathcal L}$ has orthogonal eigenfunctions.
\end{itemize}

\subsection{Dense graphlets}
\label{dense}
Suppose we have a sequence of dense graphs $G_n, $ for $  n = 1, 2, \ldots $,  with $\vol(G_n)=2|E(G_n)|=c_n n^2$ where the 
$c_n $ converge to a constant $c > 0$. In this case,  the
$\mu$-norm is equivalent to   other norms such as the cut-norm and subgraph-norm  in \cite{borgs}. 
By using the regularity lemma, the graphlets $\mathcal{G}(\Omega, \Delta)$ of a  dense graph sequence is  of a finite type. In other words, there is a graph $H$ on $h$ vertices
where $h$ is a constant (independent of $n$) such that $\Omega = \Omega_H$ is taken to be a partition of $[0,1]$ into $h$ intervals of the same length. 
Let $\varphi_1, \ldots, \varphi_h$ denote the eigenfunctions of $H$.

For $n=hm$ and $m \in {\mathbb Z}$, we will describe a basis   for a graph $G_{n}$. 
The primary eigenfunctions  can be written as
\begin{eqnarray*}
\phi_j^{(n)}(v)&=& \varphi_j( \lceil v/m \rceil)~~~~ \text{where $v \in \{1, 2, \ldots, n\}$ and $j=1, \ldots, h$},
\end{eqnarray*}
while the complementary eigenfunctions consist of $n-h=(m-1)h$ eigenfunctions as follows: For $1 \leq a \leq h, 1 \leq b \leq m-1$, 
\begin{eqnarray*}
\phi^{(n)}_{a,b}(a'm+b')&=&\begin{cases}
e^{2\pi i bb'/m} & \text{ if $a'+1=a$,}\\
0& \text{otherwise.}
\end{cases}
\end{eqnarray*}
\ignore{
\subsection{Affine graphlets for families of paths, cycles
 and their cartesian products}
Suppose we consider a sequence of paths  $P_n$ on $n$ vertices. It is not hard to see that the sequence converges to 
the line segment $\Omega=[0,1]$, which is in fact a  manifold.
The eigenfunctions for the Laplace operator $\Delta$ of $\Omega$ are just $\phi_x$ for $x \in [0,1]$,  defined by $\phi_x(y)=e^{\pi i xy}$.  For any integer $n$, the lifted graphlets  for $P_n$  are just $\phi_j$, $j=0, 1, \ldots, n-1$, such that
$\eta_j(k)=\phi_{j/n}(k/n)$ which coincide with the eigenfucntions of $P_n$.  We remark that
in this case,
$\Omega$ has far more eigenfunctions than those for $P_n$.

Consider  a sequence of cycles $C_n$ on $n$ vertices. Again the sequence converges to 
the line segment $\Omega=[0,1]$, where the two endpoints are only allowed to take the same value.
The eigenfunctions for $\Omega$ are just $\phi_x$ for $x \in [0,1]$,  defined by $\phi_x(y)=e^{2\pi i xy}$ . 
For any integer $n$, the lifted graphlets  for $C_n$  are just $\phi_j$, $j=0, 1, \ldots, n-1$ such that
$\eta_j(k)=\phi_{j/n}(k/n)$ which are the same as the eigenfucntions of $C_n$. 

Suppose we consider the cartesian products of cycles, $C_n \times C_n$, the $2$-dimensional torus
lattices. The usual
labeling by mapping vertices into $[0,1]$ no longer describes the geometry of $\Omega$. The geometry of $\Omega$ relies on  the gradiant operator as well as the Laplace
operator as discussed in Section \ref{geo}. The graph limit $\Omega$ here is better described as a $2$-dimensional metric measure space (with $L_1$-distance). 
}
\subsection{Quasi-random graphlets}

Originally, quasi-randomness is an equivalent class of graph properties that are shared by random graphs (see \cite{CGW89}). In the language of graph limits,
 quasi-random graph properties with edge density $1/2$ can be described as a graph sequence $G_n, $ for $  n = 1, 2, \ldots $, converging to graphlets $\mathcal{G}(\Omega, \Delta)$ where  $\Omega=[0,1]$ and 
 $\Delta(x,y)=1/2$, for $x \not = y$ and  $\mu(x)=\mu(y)$ for all $x,y$.
 Compared with the original equivalent quasi-random properties for $G_n$ (included in parentheses), the quasi-random graphlets with edge density $1/2$ satisfies 
 the following equivalent statements for the graph sequence $G_n$ where $n = 1, 2, \ldots$. 
\begin{description}
\item[(1)]
The graph sequence $G_n$  converges to graphlets $\mathcal{G}(\Omega, \Delta)$ in the spectral distance.\\
 ({\it \bf The eigenvalue property}: {\it The adjacency matrix of $G_n$ on $n$ vertices  has one eigenvalue $n/2 + o(n)$ with all other eigenvalues $o(n)$. })
 
\item[(2)]
The graph sequence  $G_n $  converges to graphlets $\mathcal{G}(\Omega, \Delta)$ in the cut-distance.\\
({\bf The discrepancy property}: {\it For any two subsets $S$ and $T$ of the vertex set of $ G_n$, there are $|S| \cdot |T|/2 + o(n^2)$  ordered pairs $(u,v)$ with $u \in S, v \in T$ and $\{u,v\}$ being an edge
of $G_n$}. )
\item[(3)]
The graph sequence $G_n$ converges to graphlets $\mathcal{G}(\Omega, \Delta)$ in the $C_4$-count-distance.\\
({\it \bf The co-degree property}: {\it For all but $o(n^2)$ pairs of vertices $u$ and $v$  in $G_n$,  $u$ and $v$ have $n/4 +o(n)$ common neighbors.)\\
({\bf The trace property}: The trace of the adjacency matrix to the $4$th power is $n^4/16 + o(n^4)$}.)
\item[(4)]
The graph sequence $G_n $  converges to graphlets $\mathcal{G}(\Omega, \Delta)$ in the subgraph-count-distance.\\
({\it  \bf The subgraph-property}: {\it For fixed $k \geq 4$ and for any $H$ on $k$ vertices and $l$ edges, the number of occurrence of $H$  as subgraphs in $G_n$  is $n^k/2^l+o(n^k)$}.)
\item[(5)]
The graph sequence $G_n$ converges to graphlets $\mathcal{G}(\Omega, \Delta)$ in the homomorphism-distance.\\
({\it  \bf The induced-subgraph-property}: {\it For fixed $k \geq 4$ and for any $H$ on $k$ vertices, the number of occurrence of $H$  as induced subgraphs in $G_n$  is $n^k/2^{\binom k 2}+o(n^k)$.} )

 \end{description}

For a quasi-random graph sequence, the primary graphlets for $G_n$ consists of the all $1$'s  vector $\mathbf 1$ and the complementary ones are  irrelevant  in the sense that they  can be any arbitrarily chosen orthogonal functions since all eigenvalues except for one approach zero. In other words, $\Delta$ as the limit of $G_n$ only has one nontrivial eigenfunction.

The generalization of quasi-randomness to sparse graphs and to graphs with general degree distributions \cite{spar, quasid} can also be
described in the framework of graphlets.
  In the previous work on  quasi-random graphs  with given degree distributions,  the results are not as strong since additional conditions are required in order to
  overcome various difficulties \cite{ spar,quasid}. By using graph limits, such obstacles and additional conditions can be removed.
In Section \ref{quasi}, we will give a complete characterization for quasi-random graphlets with any  given general degree distribution which 
include
the sparse cases.
\subsection{Bipartite quasi-random  graphlets}

A bipartite quasi-random graphlets $\mathcal{G}(\Omega, \Delta)$  can be described as follows: $\Omega$ is be partitioned into two parts
$A$ and $B$ while $W(x,y) $ is equal to some constant $\rho$ if ($x \in A, y \in B$) or ($x \in B, y \in A$), and $0$ otherwise. There are two nontrivial eigenvalues of $I - \Delta$, namely,
$1$ and $-1$. The eigenfuction $\phi_0$ associated with eigenvalue $1$  assumes the value $\phi_0(x)=1/\sqrt{\mu(A)}$ for $ x \in A$ and 
$\phi_0(y) = 1/\sqrt{\mu(B)}$ for $y \in B$. The eigenfunction $\phi_1$ associated with eigenvalue $-1$ is defined by $\phi_1(x)=1/\sqrt{\mu(A)}$ for $ x \in A$ and
$\phi_1(y) = -1/\sqrt{\mu(B)}$ for $y \in B$.

The bipartite version of quasi-random graphs is useful in the proof of the regularity lemma \cite{reg}.
Bipartite quasi-random graphlets, as well as  quasi-random graphlets, serve as the basic building blocks  for general types of graphlets. More on
this will be given in Sections \ref{r2} and \ref{rk}.
\subsection{ Graphlets of  bounded rank}
A quasi-random sequence is a graph sequence which converges to a graphlets of rank $1$ as we will see in this section.  We will further consider the generalization of
graph sequences which converge to a graphlets of rank $k$. This will be further examined in  Sections \ref{r2} and  \ref{rk}.
\section{The spectral distance and the discrepancy distance}

\label{2norms}

\subsection{The cut distance and the discrepancy distance}
In previous studies of graph limits,  a so-called cut metric that is often used  for which the distance of two graphs $G$ and $H$ which share the same set of vertices $V$ is measured by the following (see \cite{ borgs, fk}).
\begin{eqnarray}
\label{cut}
\cut(G , H)= \frac 1 {|V|^2}
\sup_{S, T \subseteq V} \left| E_{G}(S,T) - E_{H}(S,T) \right|
\end{eqnarray}
where $E_G(S,T)$ denotes the number of ordered pairs $(u,v)$ where $u $ is in $ S$, $v$ is in $ T$ and $\{u,v\}$ is an edge in $G$.

We will   define a  discrepancy distance  which is similar to but different from the above cut distance.  For two graphs $G$ and $H$ on the same vertex
set $V$,
the discrepancy distance, denoted by $\disc(G,H)$ is defined
as follows:
\begin{eqnarray}
\label{disc}
\disc(G, H)
=\sup_{S, T \subseteq V}\left| \frac{E_{G}(S,T)}{\sqrt{\vol_G(S)\vol_G(T)}}-\frac{E_{H}(S,T)}{\sqrt{\vol_H(S)\vol_H(T)}} \right|. \end{eqnarray}
We remark that the only difference between  the cut distance and the discrepancy distance is in the normalizing factor which will be useful
in the proof later.

For two graphs $G_m$ and $G_n$ with $m$ and $n$ vertices respectively, we use the labeling maps $\theta_n$ and $\eta_n$ to map $[0,1]$ to the vertices of $G_m$ and $G_n$, respectively.  We define the measures $\mu_m$ and $\mu_n$ on $[0,1]$ using the degree sequences of $G_m$ and $G_n$ repectively, as in
Section \ref{subdeg}.   From the definitions and substitutions, we can write:
\begin{eqnarray}
\label{enst}
 E_{G_n}(S,T)=\vol(G_n) \langle \chi_S, (I-\Delta_n) \chi_T \rangle_{\mu_n,\theta_n}.
 \end{eqnarray}
Therefore the  discrepancy distance in (\ref{disc}) can be written in the following general format:
\begin{align}
&\disc(G_m,G_n)\nonumber\\
&=\inf_{\theta_m \in {\F}_m, \eta_n \in \F_n} \sup_{S, T \subseteq [0,1]} \left| \frac{\langle \chi_{S}, (I-\Delta_m) \chi_{T} \rangle_{\mu_m,\theta_n}} 
{\sqrt{\mu_m(S)
 \mu_m(T)}
 }
-\frac{\langle \chi_{S}, (I-\Delta_n) \chi_{T} \rangle_{\mu_n,\eta_n}}{\sqrt{\mu_n(S)
 \mu_n(T)}
 } \right| \nonumber\\
\label{gm}
\end{align}
where $S, T$ range over all integrable subsets of $[0,1]$.
We can rewrite (\ref{enst}) as follows.
\begin{eqnarray}
\label{aa}
E_{G_n}(S,T)&=&\vol(G_n) \int_{x \in \Omega} \chi_S(x) \big((I-\Delta_n) \chi_T\big)(x) \mu_n(x).
\end{eqnarray}
Alternatively,  $E_{G_n}(S,T)$ was  previously expressed   (see   \cite{lsz}) as follows:
\begin{eqnarray}
\label{previous}
E_{G_n}(S,T)&=& n^2 \int_{x \in S}\int_{y \in T} { W}(x,y)~ ds~ dt
\end{eqnarray}

The two formulations (\ref{aa}) and (\ref{previous})  look quite different but are of the same form when the  graphs involved are regular.
However,  the format in (\ref{previous}) seems hard to extend to   general graph sequences with smaller edge density.

Although the above definition in (\ref{gm}) seems complicated, it can be simplified when the degree sequences converge. Then,  $\mu_m$ and $\mu_n$ are to be approximated
by the measure $\mu$ of the graph limit. In such cases, we define
\begin{align}
&~~\disc_\mu(G_m,G_n)\nonumber\\
 &=\inf_{\theta_m \in {\F}_m, \eta_n \in \F_n} \sup_{S, T \subseteq [0,1]} \left| \frac{\langle \chi_{S}, (I-\Delta_m) \chi_{T} \rangle_{\mu,\theta_m}} 
{\sqrt{\mu(S)
 \mu(T)}
 }
-\frac{\langle \chi_{S}, (I-\Delta_n) \chi_{T} \rangle_{\mu,\eta_n}}{\sqrt{\mu(S)
 \mu(T)}
 } \right|\nonumber\\
 &=\sup_{S, T \subseteq [0,1]} \frac 1 {\sqrt{\mu(S)\mu(T)}} \left|\langle \chi_{S}, (\Delta_m-\Delta_n) \chi_{T} \rangle_{\mu} \right|.
 \label{disc2}
\end{align}
where $S, T$ range over all integrable subsets of $[0,1]$ and we suppress the labelings $\theta,\eta$ which achieve the infininum.

We will show  that the convergence using the spectral distance defined under the $\mu$-norm  is equivalent to the convergence using  the discrepancy distance   in Section \ref{2norms}.  

\subsection{The equivalence of convergence using spectral distance and the discrepancy distance}

We will prove the following theorem concerning the equivalence of the convergences under the spectral distance (as in (\ref{eq10})) and the discrepancy distance (as in (\ref{gm})). The result holds without any density restriction on the graph sequence. The proof extends  similar techniques in Bilu and Linial \cite{BL} and  \cite{BN, butler} for regular or random-like graphs to graph sequences of general
degree distributions. 
\begin{theorem}
\label{2norm}
Suppose  the degree distributions $\mu_n$,  of a graph sequence $G_n, $ for $  n = 1, 2, \ldots $,  converges to $\mu$.
The following statements are equivalent:

\noindent
(1) The graph sequence  $G_n$ converges under the spectral distance.

\noindent
(2) The graph sequence $G_n$  converges under the $\disc$-distance.
\end{theorem}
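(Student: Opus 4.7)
The plan is to prove the two implications separately, with the forward direction being essentially a definitional unravelling and the reverse direction resting on a Grothendieck-type argument that upgrades a discrepancy bound on characteristic functions to a spectral bound on all integrable functions.

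For $(1) \Rightarrow (2)$, I would fix $\epsilon>0$ and use the hypothesis that $\mu_n \to \mu$ in $L^1$ (which is automatic from Lemma~\ref{muconv}) to replace $\mu_m$ and $\mu_n$ by $\mu$ in the expression for $\disc(G_m,G_n)$, at the cost of an additive $O(\epsilon)$ error coming from $|\mu_m(S)-\mu(S)|$ and $|\mu_n(S)-\mu(S)|$ for characteristic-function arguments. After this normalization, observe that $\sqrt{\mu(S)\mu(T)} = \|\chi_S\|_\mu \|\chi_T\|_\mu$, so the expression inside the supremum becomes exactly a normalized inner product involving $\Delta_m - \Delta_n$. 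Taking the supremum over characteristic functions is dominated by the supremum over all unit-norm integrable functions, which is $\|\Delta_m - \Delta_n\|_\mu$; the spectral Cauchy condition then gives $\disc(G_m,G_n) \leq \|\Delta_m-\Delta_n\|_\mu + O(\epsilon)$, completing this direction.

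For $(2) \Rightarrow (1)$, the point is to apply Lemma~\ref{ma22} with $A := I-\Delta_m$ and $B := I-\Delta_n$, so that $M = A-B = \Delta_n - \Delta_m$. Two things need to be checked before the lemma applies: first, $A$ and $B$ are both non-negative as kernels, which follows from Lemma~\ref{ma1}~(\ref{tt}) applied to the limit (or more directly, from the non-negativity of the adjacency entries of $G_m, G_n$ scaled by the degree measure); second, $A\mathbf{1} = B\mathbf{1} = \mathbf{1}$, which holds because $\Delta_m \mathbf{1} = \Delta_n \mathbf{1} = 0$ by the identity noted after (\ref{Del}). Given convergence in $\disc$-distance, the hypothesis of Lemma~\ref{ma22} is exactly the condition $|\langle \chi_S, M\chi_T\rangle_\mu| \leq \gamma\sqrt{\mu(S)\mu(T)}$ with $\gamma = \disc_\mu(G_m,G_n)$ (again using $\mu_m,\mu_n \to \mu$ to replace the measures, with vanishing error). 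The conclusion $\langle f, Mg\rangle_\mu \leq 20\gamma\log(1/\gamma)\|f\|_\mu\|g\|_\mu$ then gives $\|\Delta_m-\Delta_n\|_\mu \leq 20\gamma\log(1/\gamma)$, which tends to $0$ whenever $\gamma \to 0$; this is precisely the spectral Cauchy condition.

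The main obstacle is the reverse direction, whose core is the dyadic decomposition embedded in Lemma~\ref{ma22}. The strategy there is to approximate any unit-norm test function $f$ by a function $\tilde f = \sum_t (4/5)^t f^{(t)}$ where $f^{(t)}$ is the indicator of the level set where $\bar f$ rounds to $(4/5)^t$ (losing at most a factor of $1/4$ in $\mu$-norm), then split the double sum $\sum_{s,t}(4/5)^{s+t}\langle f^{(s)}, Mg^{(t)}\rangle_\mu$ into a near-diagonal band $|s-t|\leq\kappa$, where each term is controlled by the discrepancy hypothesis $|\langle \chi_S,M\chi_T\rangle|\leq\gamma\sqrt{\mu(S)\mu(T)}$, and tails $|s-t|>\kappa$, where the positivity of $A,B$ and $A\mathbf{1}=B\mathbf{1}=\mathbf{1}$ gives the trivial pointwise bound $|\sum_t Mg^{(t)}| \leq (A+B)\mathbf{1} = 2\mathbf{1}$. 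Optimizing $\kappa = \log_{4/5}\gamma$ produces the $\gamma\log(1/\gamma)$ rate. The delicacy is entirely in the non-regular setting: because the degrees vary, the $\mu$-norm (not Lebesgue) is the natural one, and both the dyadic approximation and the tail estimate must be carried out with respect to $\mu$, using the property $(I-\Delta)\mathbf{1}=\mathbf{1}$ in place of the stochasticity identity used in the regular case.
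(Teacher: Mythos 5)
Your proposal is correct and follows essentially the same route as the paper: the forward direction is the same measure-replacement plus $\sqrt{\mu(S)\mu(T)}=\|\chi_S\|_\mu\|\chi_T\|_\mu$ observation, and the reverse direction is exactly the paper's Bilu--Linial-style dyadic level-set decomposition with base $4/5$, the band $|s-t|\le\kappa=\log_{4/5}\gamma$, and the tail bound from positivity of $I-\Delta_m$, $I-\Delta_n$ together with $(I-\Delta){\mathbf 1}={\mathbf 1}$, yielding the same $20\gamma\log(1/\gamma)$ rate. If anything, your explicit identification of $A=I-\Delta_m$ and $B=I-\Delta_n$ as the positive stochastic operators is a cleaner statement of the tail estimate than the paper's inline version.
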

\proof
Suppose that for a given $\epsilon > 0$, there exists an $N> 1/\epsilon$ such that for $n > N$, we have
\begin{eqnarray*}
 \| \mu_n-\mu \|_1 < \epsilon. \end{eqnarray*}
The proof for $ (1)$  
 $\Rightarrow$ 
 $(2)$ is rather straightforward and can be shown as follows:

Suppose (1) holds and we have, for $m, n > N$, $\|\mu_m-\mu\|_1 < \epsilon$, $\|\mu_n-\mu\|_1 < \epsilon$ and
 $\|\Delta_m - \Delta_n\|_\mu < \epsilon$.  (Here we omit the labeling maps $\theta_m, \theta_n$ to simplify the notation.) Then,
\begin{align*}
\disc(G_m,G_n)
&=\sup_{S, T \subseteq [0,1]} \left| \frac{\langle \chi_{S}, (I-\Delta_m) \chi_{T} \rangle_{\mu_m}} 
{\sqrt{\mu_m(S)
 \mu_m(T)}
 }
-\frac{\langle \chi_{S}, (I-\Delta_n) \chi_{T} \rangle_{\mu_n}}{\sqrt{\mu_n(S)
 \mu_n(T)}
 } \right|\\
 &\leq
\sup_{S, T \subseteq [0,1]}\frac{1}{\sqrt{\mu(S)\mu(T)}} \left| \langle \chi_{S}, (\Delta_m-\Delta_n) \chi_{T} \rangle_{\mu}
 \right| + 4 \epsilon
\\
&=\sup_{S, T \subseteq [0,1]}\frac{1}{\|\chi_S\|_\mu\|\chi_T\|_\mu} \left| \langle \chi_{S}, (\Delta_m-\Delta_n) \chi_{T} \rangle_{\mu}\right|
+ 4 \epsilon \\
&\leq \|\Delta_m-\Delta_n\|_\mu + 4 \epsilon\\
&\leq 5\epsilon.
\end{align*}

To prove (2) $\Rightarrow$ (1), we assume that  for $M=\Delta_n-\Delta_m$
  \begin{eqnarray}
  \label{assume}
\left|  \langle \chi_S, M\chi_T \rangle_\mu  \right| \leq
\epsilon \sqrt{\mu(S) \mu(T)} 
\end{eqnarray}
for some  $\epsilon >0$ for  any two integrable subsets  $S, T \subseteq [0,1]$. It is enough to show that for any two integrable functions $f,g: [0,1] \rightarrow {\mathbb R}$, we have
\begin{eqnarray}
\label{eq35}
|\langle f,  M g \rangle_\mu | \leq 20 \epsilon  \log (1/\epsilon ) \| f\|_\mu \|g \|_\mu
\end{eqnarray}
provided $\epsilon < .02$.

\ignore{
\begin{lemma}
\label{ma22}

  Assume  an operator $M: [0,1] \times[0,1] \rightarrow \mathbb R$
  is the difference of two positive operators,
  $M=A-B$ where $A(x,y) \geq 0, B(x,y) \geq 0$ for all $x,y \in [0,1]$. Suppose   $A{\mathbf 1} = B{\mathbf 1} = \mathbf 1$ and suppose $M$ satisfies
  \begin{eqnarray}
  \label{assume}
\left|  \langle \chi_S, M\chi_T \rangle_\mu  \right| \leq
\gamma \sqrt{\mu(S) \mu(T)} 
\end{eqnarray}
for some  $\gamma >0$ for  any two integrable subsets  $S, T \subseteq [0,1]$. Then for any two integrable functions $f,g: [0,1] \rightarrow {\mathbb R}$, we have
\[ 
\langle f,  M g \rangle_\mu \leq 20 \gamma  \log (1/\gamma ) \| f\|_\mu \|g \|_\mu
\]
provided $\gamma < .02$.
\end{lemma}

To prove this, we  use ideas in  \cite{BL, BN, butler}. 
First we prove the following  fact:
}

The proof of  (\ref{eq35}) follows a sequence of claims.

\noindent
{\it Claim 1:}
For  an integrable function $f$ defined on ${[0,1]}$ with $\|f\|_\mu=1$, for any $\epsilon > 0$, there exists an $N(\epsilon)$ such that for any $n > N(\epsilon)$ there is a function $h$ defined on $[0,1]$ satisfying :\\
(1) $ \| h\|_\mu \leq 1$,\\
(2) $\|f-h\|_\mu \leq 1/4+\epsilon$,\\
(3) The  value $h(y)$ in the interval $( (j-1)/n,j/n]$ is a constant $h_j$ and $h_j$ is 
 of the form $(\frac 4 5)^j$ for integers $j$.\\
{\it Proof of Claim 1:}
Since $f$ is integrable, for a given $\epsilon$, we can approximate $\|f\|^2_\mu$ by
 a function $\bar f$, with $\bar f (x)=f_j$ in $((j-1)/n,j/n]$, such that 
 \[ \left| \int_0^1 (f-\bar f)^2(x) \mu(x) \right| < \epsilon. \]
For   $\bar f=(f_j)_{1 \leq j \leq mn}$, we define  $h=(h_j)_{1 \leq j \leq mn} $ as follows. If $f_j=0$, we set $h_j=0$. Suppose $f_j \not = 0$, there is a unique integer $k$ so that $(4/5)^{k}< |f_j| \leq (4/5)^{k-1}$. We set $h_j=\sign(f) (\frac 4 5)^k $ where
$\sign(f_j) =1$ if $f_j$ is positive and $-1$ otherwise. Then 
\[ 0 < |f_j-h_j| \leq  (\frac 4 5)^{k-1} -(\frac 4 5)^k = \frac 1 4 (\frac 4 5 )^k < \frac 1 4 |f_j|,\]
which implies 
$\|f-h\|_\mu^2 \leq \epsilon + \sum_j \int_0^1|f_j-h_j|^2 {\mu}(x) \leq  \epsilon +\frac 1 {16} \sum_t |f_j|^2 \mu(x) = \frac 1 {16}+\epsilon$.
Claim 1 is proved.

\noindent
{\it Claim 2:}
Suppose there are  functions $f',g'$ satifying 
$
\|M\|_\mu$ $=|\langle f',Mg'\rangle_\mu|$ and $\|f'\|_\mu=\|g'\|_\mu=1$. If $f,g$ are functions such that $\|f\|_\mu, \|g\|_\mu \leq 1$ and $\|f'-f\|_\mu\leq 1/4+\epsilon$,$\|g'-g\|_\mu \leq 1/4+\epsilon$, then 
\begin{eqnarray}
\label{eq3f}
\|M\|_\mu \leq (2+4 \epsilon) |\langle f, Mg \rangle_\mu|.
\end{eqnarray}

Claim 2 can be proved by using Claim 1 as follows:
\begin{eqnarray*}
\|M\|_\mu &=&|\langle f',Mg'\rangle_\mu|\\
&\leq&|\langle f,Mg\rangle_\mu|
+|\langle f'-f,Mg\rangle_\mu|+|\langle f',M(g'-g)\rangle_\mu|\\
&\leq& |\langle f,Mg\rangle_\mu|+\big(\frac 2 4+2 \epsilon\big) \|M\|_\mu.
\end{eqnarray*}
This implies
$
\|M\|_\mu \leq (2+4 \epsilon) |\langle f, Mg \rangle_\mu|$, as desired.

From Claims 1 and 2, we can upper bound $\|M\|_\mu$ to within a multiplicative factor of $2+4\epsilon$ by
bounding of $ |\langle f, Mg \rangle_\mu|$ with   $f,g$ of the following form: Namely, $
f =\sum_{t} (\frac 4 5)^t f^{(t)}$, where the  $f^{(t)}$ denotes the indicator function of  $\{x : \bar f(x)= (\frac 4 5 )^t\}$.   Similarly we write
 $g = \sum_t (\frac 4 5)^t g^{(t)}$, where the  $g^{(t)}$ denotes the indicator function of  $\{y : \bar g(y)= (\frac 4 5 )^t\}$. 
Now  we choose $\kappa= \log_{4/5} \epsilon$ and we consider
\begin{align*}
\left| \langle f, M g \rangle_\mu
\right| &\leq \sum_{s,t} (\frac 4 5 )^{s+t} \left|\langle f^{(s)}, Mg^{(t)} \rangle_\mu \right|\\
&\leq \sum_{|s-t|\leq \kappa} (\frac 4 5 )^{s+t} \left|\langle f^{(s)}, Mg^{(t)} \rangle_\mu \right| \\
&~~~+ \sum_{s} (\frac 4 5 )^{2s+\kappa}\sum_t \left|\langle f^{(s)}, Mg^{(t)} \rangle_\mu \right| \\
&~~~+ \sum_{t} (\frac 4 5 )^{2t+\kappa}\sum_s \left|\langle f^{(s)}, Mg^{(t)} \rangle_\mu \right| \\
&= X + Y + Z.
\end{align*}
We now bound the three terms separately.  For a function $f$, we denote  $\mu(f)=\mu(\supp(f))$ to be the measure
 of the support of $f$.
 Using the assumption (\ref{assume}) for $(0,1)$-vectors and the fact that $f^{(s)}$'s are orthogonal (as well as the $g^{(t)}$'s), we have
\begin{align*}
X&=\sum_{|s-t|\leq \kappa} (\frac 4 5 )^{s+t} \left|\langle f^{(s)},Mg^{(t)} \rangle_\mu \right|\\
&\leq  \epsilon  \sum_{|s-t|\leq \kappa} (\frac 4 5 )^{s+t}\sqrt{\mu(f^{(s)})\mu( g^{(t)})}\\
&\leq \frac{ \epsilon } 2 \sum_{|s-t|\leq \kappa}\big( (\frac 4 5 )^{2s}\mu(f^{(s)})+(\frac 4 5 )^{2t}\mu( g^{(t)})\big)\\
&\leq \frac{ \epsilon (2 \kappa+1)} 2 \big(\sum_{s} (\frac 4 5 )^{2s}\mu(f^{(s)})+\sum_{t} (\frac 4 5 )^{2t}\mu( g^{(t)})\big)\\
&\leq  \epsilon (2 \kappa+1),
\end{align*}
since each term can appear at most $2 \kappa +1$ times.
For the second term we have, by using Lemmas \ref{ma1}, the following:
\begin{align*}
Y&\leq \sum_{s} (\frac 4 5 )^{2s+\kappa}\sum_t \left|\langle f^{(s)}, Mg^{(t)} \rangle_\mu \right|\\
&\leq (\frac 4 5 )^{\kappa} \sum_{s} (\frac 4 5 )^{2s} \langle f^{(s)}, |(\Delta_m-\Delta_n)\sum_t g^{(t)}| \rangle_\mu \\
&\leq(\frac 4 5 )^{\kappa} \sum_{s} (\frac 4 5 )^{2s} \langle f^{(s)}, (\Delta_m+\Delta_n){\mathbf 1} \rangle_\mu \\
&\leq 2(\frac 4 5 )^{\kappa} \sum_{s} (\frac 4 5 )^{2s} \langle f^{(s)}, {\mathbf 1} \rangle_\mu \\
&\leq 2 (\frac 4 5 )^{\kappa}
\sum_s\mu(f^{(s)})\\
&\leq 2 (\frac 4 5 )^{\kappa}
\end{align*}
The third term can be bounded in a similar way.  Together, we have
\begin{align*}
\|M\|_\mu & \leq (2 +4\epsilon)\big( \epsilon( 2 \kappa +1)+ 4 (\frac 4 5)^{\kappa})\\
& \leq (2+4\epsilon) \big( \epsilon( 2\frac{ \log(1/\epsilon)}{\log 5/4}+1)+ 4 \epsilon \big)\\
&\leq \frac{4+8\epsilon }{\log(5/4)} \epsilon \log(1/\epsilon) + 8\epsilon
\\
&\leq 20 \epsilon \log (1/\epsilon)
\end{align*}
 since $\frac {4} {\log 5/4} \approx 17.93$ and $\epsilon < .02$.
This completes the proof of the theorem. 
\qed

\section{Quasi-random graphlets with general degree distributions -- graphlets of rank $1$  }
\label{quasi}

We consider a graph sequence that consists of quasi-random graphs with degree distributions converging to some general degree distribution.
We will  give characterizations for  a quasi-random graph sequence by stating a number of equivalent properties. Although the proof is
mainly  by summarizing previous known facts, the format of graph limits helps in  simplifying the previous various statements for quasi-random graphs with general degree distributions  including the cases for sparse graphs.

\noindent
\begin{theorem}
The following statements  are equivalent for a graph sequence $G_n,$ where $ n=1,2, \dots$.
\label{t2}
\begin{description}
\item[($i$)]
 $G_n $'s  form a quasi-random sequence with degree distribution converging to $\mu$.
\item[($ii$)]  The graph sequence  $G_n=(V_n, \Delta_n)$  converges to the graphlets ${\mathcal G}=(\Omega, \Delta)$ where
$\Omega$ is a measure space with measure $\mu$ and  $I -\Delta$ is of rank $1$, i.e., $I- \Delta$ has one
nontrivial eigenvalue $1$. (Equivalently, for each $n$, $I_n- \Delta_n$ has all  eigenvalue $o(1)$ with the exception of  one eigenvalue $1$.)  
\item[($iii$)]   The graph sequence $G_n=(V_n, \Delta_n) $  converges to the graphlets ${\mathcal G}=(\Omega, \Delta)$ where
$\Omega$ is a measure space with measure $\mu$ and    the Laplace operator $\Delta$ on $\Omega$ satisfies
\begin{eqnarray*}
\int_{x \in \Omega} f(x)\big((I-\Delta)g\big)(x) \mu(x)=
\int_{x \in \Omega} f(x) \mu(x) \int_{x \in \Omega} g(x) \mu(x) 
\end{eqnarray*}
for any integrable $f, g: \Omega \rightarrow {\mathbb R}$.
\item[($iv$)] 
The degree distribution $\mu_n$ of $G_n$ converges to $\mu$ and  
\begin{eqnarray*}
\| D_n^{-1/2}\big(A_n-\frac{D_nJD_n}{\vol(G_n)}\big)D_n^{-1/2}  \|
= o(1) \end{eqnarray*}
where  $A_n$ and $D_n$ denote the adjacency matrix and diagonal degree matrix of $G_n$, respectively.
Here $\| \cdot \|$ denotes the usual spectral norm (in $L_2$) and $J$ denotes the all $1$'s matrix.
\item[($v$)] There exists a sequence  $\epsilon_n$ which approaches $0$ as $n$ goes to infinity such that
$G_n$ satisfies   the property $P(\epsilon_n)$, namely,  that the degree distribution $\mu_n$ converges to $\mu$ and  for all $S, T \subseteq V_n$
\begin{eqnarray}
\label{qr}
P(\epsilon_n):~~~~~~~~~~~~~\left| E(S,T) - \frac{\vol(S)\vol(T)}{\vol(G_n)}
\right| \leq \epsilon_n \sqrt{ \vol(S)\vol(T)} 
\end{eqnarray}
where $E(S,T)= \sum_{s \in S, t \in T} A(s,t)$.
\end{description}
\end{theorem}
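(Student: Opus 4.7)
The plan is to close the loop of equivalences $\mathrm{(ii)} \Leftrightarrow \mathrm{(iii)} \Leftrightarrow \mathrm{(iv)} \Leftrightarrow \mathrm{(v)} \Leftrightarrow \mathrm{(i)}$, treating $\mathrm{(v)}$ as the working definition of quasirandomness with prescribed degree distribution (the classical discrepancy property, normalized by $\sqrt{\vol(S)\vol(T)}$) and $\mathrm{(i)}$ as its verbal restatement.

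I would first handle $\mathrm{(ii)} \Leftrightarrow \mathrm{(iii)}$ as a purely operator-theoretic unpacking. Since $\Delta_n \mathbf{1} = 0$ for every $n$, passing to the limit gives $\Delta \mathbf{1} = 0$, so the constant function is always an eigenfunction of $I-\Delta$ with eigenvalue $1$. If $I-\Delta$ has rank $1$, then $(I-\Delta)g$ must be a scalar multiple of $\mathbf{1}$, and taking the $\mu$-inner product with $\mathbf{1}$ identifies that scalar as $\int g\, d\mu$, yielding the factorization in $\mathrm{(iii)}$. Conversely, $\mathrm{(iii)}$ says exactly that $I-\Delta$ is the rank-one operator $g \mapsto \bigl(\int g\, d\mu\bigr)\mathbf{1}$, giving $\mathrm{(ii)}$. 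For $\mathrm{(iii)} \Leftrightarrow \mathrm{(iv)}$, I would observe that $D_n^{-1/2} A_n D_n^{-1/2}$ is conjugate by $D_n^{1/2}$ to $I-\Delta_n = D_n^{-1} A_n$, and that the matrix $D_n^{1/2} J D_n^{1/2}/\vol(G_n)$ corresponds under this similarity to the $\mu_n$-orthogonal projection onto $\mathbf{1}$. Thus $\mathrm{(iv)}$ reads $\|(I-\Delta_n) - \Pi\|_{\mu_n} = o(1)$ for $\Pi$ the projection onto constants, and by Theorem \ref{lapconv} this is equivalent to $I-\Delta$ being that same projection in the limit, which is $\mathrm{(iii)}$.

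The substantive step is $\mathrm{(iv)} \Leftrightarrow \mathrm{(v)}$. Expanding
\begin{equation*}
E(S,T) - \frac{\vol(S)\vol(T)}{\vol(G_n)} = (D_n^{1/2}\chi_S)^T \Bigl(D_n^{-1/2} A_n D_n^{-1/2} - \tfrac{D_n^{1/2} J D_n^{1/2}}{\vol(G_n)}\Bigr)(D_n^{1/2}\chi_T),
\end{equation*}
and using $\|D_n^{1/2}\chi_S\|_2 = \sqrt{\vol(S)}$, Cauchy--Schwarz gives $\mathrm{(iv)} \Rightarrow \mathrm{(v)}$ immediately. The converse is precisely what Theorem \ref{2norm} delivers when specialized to the pair consisting of $G_n$ and the weighted ``random-like'' reference graph with adjacency entries $d_u d_v/\vol(G_n)$: the dyadic decomposition argument there lifts the indicator-vector discrepancy bound in $\mathrm{(v)}$ to the full operator-norm bound in $\mathrm{(iv)}$, at the cost of a $\log(1/\epsilon_n)$ factor absorbed into $o(1)$. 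The equivalence $\mathrm{(i)} \Leftrightarrow \mathrm{(v)}$ is then simply a matter of recognizing $\mathrm{(v)}$ as the discrepancy definition of a quasirandom sequence with degree distribution $\mu$.

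The main obstacle is the converse direction $\mathrm{(v)} \Rightarrow \mathrm{(iv)}$: testing a bilinear form on indicator vectors and concluding spectral-norm control is not automatic, and this is where the Bilu--Linial style dyadic approximation of arbitrary $L^2(\mu)$ test functions by step functions taking values $\pm(4/5)^j$, used in the proof of Theorem \ref{2norm}, does the essential work. All other implications are bookkeeping once one aligns the $\mu_n$-inner product with the symmetrized normalized Laplacian $\mathcal{L}_n$.
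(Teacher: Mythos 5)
Your proposal is correct, and its computational core --- the identity
\begin{equation*}
\langle f,(I-\Delta_n)g\rangle_{\mu_n}-\langle f,\mathbf 1\rangle_{\mu_n}\langle g,\mathbf 1\rangle_{\mu_n}
= (f')^{*}D_n^{-1/2}\Bigl(A_n-\tfrac{D_nJD_n}{\vol(G_n)}\Bigr)D_n^{-1/2}g'
\end{equation*}
linking ($iii$) and ($iv$), and the Cauchy--Schwarz step for ($iv$)$\Rightarrow$($v$) --- coincides with what the paper does. The genuine divergence is in the hard implication ($v$)$\Rightarrow$($iv$): the paper disposes of both ($i$)$\Rightarrow$($v$) and ($v$)$\Rightarrow$($iv$) by citing the earlier Chung--Graham work on quasi-random graphs with given degree sequences, i.e., it treats ($iv$) and ($v$) as two members of an externally established equivalence class, whereas you derive ($v$)$\Rightarrow$($iv$) internally by specializing the Bilu--Linial dyadic argument of Theorem \ref{2norm} to the pair $(G_n, H_n)$, where $H_n$ is the weighted reference graph with entries $d_ud_v/\vol(G_n)$. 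This is a legitimate and rather clean observation: $H_n$ has the same degree sequence as $G_n$, $E_{H_n}(S,T)=\vol(S)\vol(T)/\vol(G_n)$ exactly, and $I-\Delta_{H_n}$ symmetrizes to the rank-one projection, so the discrepancy in ($v$) is literally $\disc(G_n,H_n)$ and the operator norm in ($iv$) is literally the spectral distance; Theorem \ref{2norm} then converts one into the other at the cost of a $\log(1/\epsilon_n)$ factor. What your route buys is self-containment and a unified mechanism (one dyadic lemma drives both Theorem \ref{2norm} and Theorem \ref{t2}); what the paper's route buys is brevity and a direct link to the historical quasirandomness literature. Your rank-one argument for ($ii$)$\Leftrightarrow$($iii$), using self-adjointness of $I-\Delta$ under $\langle\cdot,\cdot\rangle_\mu$ to identify the scalar $c(g)$ as $\int g\,d\mu$, is also more explicit than the paper's one-line remark. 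The only point to flag is that both you and the paper pass silently from discrepancy over vertex subsets to discrepancy over arbitrary measurable $S,T\subseteq[0,1]$ before invoking the dyadic lemma; since the kernels are constant on the rectangles $I_u\times I_v$ this is a routine extremality argument, but it deserves a sentence.
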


\begin{remark}{\rm
Before proceeding to prove Theorem \ref{t2}, we note that  a sequence of random graphs with  degree distribution  $\mu_n$ converging to $\mu$ is an 
example satisfying the above properties almost surely.  Here we use  random graph model $G_{\textbf{d}}$  for a given degree sequence $\textbf{d}= (d_v)_{v \in G}$  defined by
choosing $\{u,v\}$ as an edge with probability 
  $d_ud_v/\sum_s d_s$
for any two vertices $u$ and $v$,  (see \cite{cl}).}
\end{remark}

\begin{remark}{\rm 
The above list of   equivalent properties  does not include the  measurement  of  counting  subgraphs. 
Indeed, the problem of enumerating subgraphs in a sparse graph can be  inherently  difficult   because, for example, a random  graph  $G(n,p)$  with $p = o(n^{-1/2})$ 
 contains
very few four cycles. 
Consequently, the error
bounds  could  be proportionally quite large. 
}
\end{remark}

 Instead of counting $C_4$,  we can consider an even cycle $C_{2k}$ or the trace of $(2k)$th power, leading to the following condition:
 
\noindent
{\it  (vi)~~~ For some constant $k$ (depending only on the degree sequence),  a graph sequence $G_n$ satisfies
\[ \left|  \mbox{Trace} (I-\Delta_n)^k -1 \right| =o(1). \]
}
\begin{remark} {\rm Suppose that    in a graph $G_n$,   all eigenvalues of $I -\Delta_n $ except for eigenvalue $1$ are strictly smaller than $1$. Then as $k$ goes to infinity, the 
 trace of the $k$th power of $I-\Delta$  approaches $1$.  How should ($vi$) be modified in a way that it can be an equivalent property to ($i$) through ($v$) ?
 We will leave this as an intriguing  question.}
 \end{remark}
\begin{question}{\rm
Is (vi) equivalent to (i) through (v) for some constant $k$ depending only on $\Omega$?}
\end{question}

\begin{remark}{\rm
It is easily checked that  ($vi$) implies ($ii$).  For the case of dense graphs,  the reverse direction  holds \cite{CGW89}.  For general graphs, to prove  ($ii$) $\rightarrow $ ($vi$) involves the spectral distribution.  
For example, for a regular graph on $n$ vertices and degree $d$,  a necessary condition for ($vi$) to hold is
 that $n d^{k/2} \leq \epsilon_n$.  In particular,  if the spectrum of the  graph satisfies the semi-circle law, then this necessarily condition is also sufficient.
 For a general graph,  the necessary condition should be replaced by  $n \bar d^{k/2} \leq \epsilon_n$  where $\bar d$ is the second order average degree, namely,
 $\bar d = \sum_{v} d^2_v/\sum_v d_v$.  Nevertheless, there are  quasi-random graphs that satisfy  ($ii$) but require $k$ much larger than $2\log n /\log \bar d$.
 For example, we can take the product of a quasi-random graph $G_p$  and a complete graph $K_q$ which is formed  by replacing each vertex of $G_p$  with a copy of $K_q$ and
 replacing each edge in $G_p$  by a complete bipartite graph $K_{q,q}$. 
 
}
 \end{remark}
 
 \begin{question}
{\rm
A subgraph $F$ is said to be  {\it forcing}  if  when the number of occurrence of  $F$ in a graph $G$
is close to (say, within a multiplicative factor of $1+\epsilon$) what is expected in a random graph   with the same degree sequence, then all subgraphs with a bounded number  $k$ of vertices (where $\epsilon$ depends on $k$)  occur in $G$
close to the expected values in a random graph with the same degree sequence. A natural problem is to determine subgraphs which are forcing for quasi-random graphs with general degree sequences.}
\end{question}

\noindent
{\it Proof of Theorem \ref{t2}:}
We will show $ (i) \Rightarrow (v) \Rightarrow (iv) \Rightarrow (iii) \Rightarrow (ii) \Rightarrow (i)$.

We note that $(i) \Rightarrow (v)$ follows from the implications of quasi-randomness for graphs with general degree distributions \cite{quasid}.
Also, $(v) \Rightarrow (iv)$ follows from the fact that $(iv)$ is one of the equivalent quasi-random properties.

To see $(iv) \Leftrightarrow (iii)$, we note that the Laplace operator $\Delta_n$ of $G_n$ satisfies, for any $f, g: V(G_n) \rightarrow {\mathbb R}$,
\begin{align*}
&\left|\int_x f(x)(I-\Delta_n)g(x) \mu_n(x)-
\int_{x} f(x) \mu_n(x) \int_{x } g(x) \mu_n(x) \right|\\
&\\
=& ~~\left| \langle f, (I-\Delta_n)g \rangle_{\mu_n} - \langle f, {\mathbf 1}\rangle_{\mu_n}  \langle g, {\mathbf 1}\rangle_{\mu_n} \right|\\
&\\
=&~~\left| \sum_{u \in V_n} \frac{f(u) A_n g(u)}{\vol(G_n)} - \sum_{u \in V_n} f(u) \mu_n(u) \sum_{v \in V_n} g(v) \mu_n(v) \right|\\
=&~~ \left| f' D_n^{-1/2} \bigg( A_n  - \frac{D_nJD_n}{\vol(G_n)} \bigg)D_n^{-1/2}  g' \right|\\
\end{align*}
where $f'=D_n^{1/2}f/\vol(G_n)$ and $g'= D_n^{1/2} g/\vol(G_n)$.
To prove $(iii) \Rightarrow (iv)$, we have  from ({\it iii}),
\begin{align*}
&\left|\int_x f(x)(I-\Delta)g(x) \mu_n(x)-
\int_{x} f(x) \mu_n(x) \int_{x } g(x) \mu_n(x) \right|\\
\leq&~~\| D^{-1/2} \bigg( A_n  - \frac{D_nJD_n}{\vol(G_n)} \bigg)D^{-1/2} \| \cdot \|f'\| \cdot \|g'\| \\
\leq&~~ \epsilon_n \|f'\| \|g'\|\\
=& ~~\epsilon_n \sqrt{\int f^2(x) \mu_n(x) \int g^2(x) \mu_n(x).
} 
\end{align*}
Since $\mu_n$ converges to $\mu$ and $\epsilon_n$ goes to $0$ as $n$ approaches infinity, $(iv) \Rightarrow (iii)$ is proved.
The other direction can be proved in a similar way.

$(iii) \Rightarrow (ii) $ follows from the fact that $I - \Delta$ is of rank $1$. All adjacency matrices $A_n$ are close to a rank $1$ matrix and therefore
 $\Omega$ is of rank $1$.

To prove that $(ii) \Rightarrow (i) $, we use the fact that for any graph the Laplace operator is a sum of projections of eigenspaces. If $\Delta$ is
of rank $1$, there is only one main eigenspace of dimension $1$  (associated with the Perron vector)  for the normalized adjacency matrix.
\qed

\section{Bipartite quasi-random graphlets with general degree distributions }
\label{bquasi}

We consider the  graph limit of a graph sequence consisting of  bipartite quasi-random graphs with degree distributions converging to some general degree distribution.
The characterizations for  a bipartite quasi-random graph sequence  are similar but different from those of quasi-random graphs.
Because of  the role that  bipartite quasi-random graphlets plays in  general graphlets, we will state a number of equivalent properties. The proof is
quite similar to that for Theorem \ref{t2} and will be omitted. 
\noindent
\begin{theorem}
The following statements are equivalent  for a graph sequence $G_n,$ where $  n = 1, 2, \ldots $.
\label{t22}
\begin{description}
\item[(i)]
 $G_n$'s  form a  bipartite quasi-random sequence with degree distribution converging to $\mu$.
\item[(ii)]   The graph sequence $G_n=(V_n, \Delta_n), $ converges to the graphlets ${\mathcal G}=(\Omega, \Delta)$ where
$\Omega$ is a measure space with measure $\mu$ and  and  $I-\Delta$ has two nontrivial eigenvalues $1$ and $-1$.
 Namely, for each $n$, $I - \Delta_n$ has all eigenvalues $o(1) $ with exceptions of two eigenvalues $1$ and $-1$.
 \item[(iii)]  The graph sequence  $G_n$ converges to the graphlets ${\mathcal G}=(\Omega, \Delta)$ where
$\Omega$ is a measure space with measure $\mu$.  For some $X \subset \Omega$, 
   the Laplace operator $\Delta$  satisfies
\begin{align*}
&\int_{x \in \Omega} f(x)(I-\Delta)g(x) \mu(x)\\
&=
\int_{x \in X} f(x) \mu(x) \int_{x \in\bar X} g(x) \mu (x) 
+\int_{x \in \bar X} f(x) \mu(x) \int_{x \in X} g(x) \mu (x) 
\end{align*}
for any $f, g: \Omega \rightarrow {\mathbb R} $ where $\bar X$ denotes the complement of $X$.
\item[(iv)] The degree distribution $\mu_n$of $G_n$ converges to $\mu$ and 
\begin{eqnarray*}
\| D_n^{-1/2}\big(A_n-\frac{D_n(J_{X,\bar X}+ J_{\bar X,X})D_n}{\vol(G_n)}\big)D_n^{-1/2}  \|
=o(1) \end{eqnarray*}
where  $J_{X,\bar X}(x,y)=1$ if ($x \in X$ and $y \in \bar X$) and $0$ otherwise.
\item[(v)] There exist $X \subset \Omega$ and   a sequence  $\epsilon_n$ which approaches $0$ as $n$ goes to infinity such that
the bipartite graphs $G_n$ satisfies   the property   that the degree distribution $\mu_n$ converges to $\mu$ and  for all $S, T \subseteq V_n$
\begin{align*}
&\left| E(S,T) - \frac{\big(\vol(S\cap X)\vol(T\cap \bar X)+\vol(S\cap \bar X)\vol(T\cap X)\big)}{\vol(G_n)}
\right| \\
&\leq \epsilon_n \sqrt{ \vol(S)\vol(T)} 
\end{align*}
where $E(S,T)= \sum_{s \in S, t \in T} A(s,t)$.
\end{description}
\end{theorem}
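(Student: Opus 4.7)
The plan is to mirror the cyclic chain of implications used in Theorem~\ref{t2}, namely to establish $(i)\Rightarrow(v)\Rightarrow(iv)\Rightarrow(iii)\Rightarrow(ii)\Rightarrow(i)$, adapting each step to the bipartite setting by replacing the rank~$1$ projection onto the Perron eigenfunction by the rank~$2$ projection onto the eigenfunctions associated to eigenvalues $1$ and $-1$. The set $X\subseteq\Omega$ that appears in $(iii)$, $(iv)$, $(v)$ will play the role of the support of the positive part of the $(-1)$-eigenfunction, so a crucial preliminary observation is that for a bipartite graphlet the $(-1)$-eigenfunction $\phi_1$ of $I-\Delta$ takes two distinct values $1/\sqrt{\mu(A)}$ and $-1/\sqrt{\mu(B)}$, and one may identify $X$ with $\{x:\phi_1(x)>0\}$ up to a measure-preserving rearrangement.

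First I would handle $(i)\Rightarrow(v)$, which is essentially a restatement of the bipartite discrepancy property for quasi-random bipartite graphs with general degree distributions, exactly parallel to the rewriting of the quasi-random inequality in Theorem~\ref{t2}(v). Next, $(v)\Rightarrow(iv)$ follows from Theorem~\ref{2norm}: the bipartite discrepancy inequality says precisely that the difference $A_n-D_n(J_{X,\bar X}+J_{\bar X,X})D_n/\vol(G_n)$, when conjugated by $D_n^{-1/2}$ and viewed as an operator on the $\mu_n$-space, has discrepancy distance $o(1)$ from zero on indicator functions, and Theorem~\ref{2norm} converts this into the full spectral bound $\|\cdot\|=o(1)$. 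The equivalence $(iv)\Leftrightarrow(iii)$ is the same algebraic rewriting used in Theorem~\ref{t2}: expand
\[
\langle f,(I-\Delta_n)g\rangle_{\mu_n}-\!\int_X\! f\,d\mu\!\int_{\bar X}\! g\,d\mu-\!\int_{\bar X}\! f\,d\mu\!\int_X\! g\,d\mu
=(f')^{\top}D_n^{-1/2}\!\Bigl(A_n-\tfrac{D_n(J_{X,\bar X}+J_{\bar X,X})D_n}{\vol(G_n)}\Bigr)\!D_n^{-1/2}g',
\]
with $f'=D_n^{1/2}f/\vol(G_n)$, $g'=D_n^{1/2}g/\vol(G_n)$, and take $n\to\infty$ using that $\mu_n\to\mu$.

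For $(iii)\Rightarrow(ii)$, I would observe that the bilinear form in $(iii)$ equals $\langle f,\chi_X\rangle_\mu\langle g,\chi_{\bar X}\rangle_\mu+\langle f,\chi_{\bar X}\rangle_\mu\langle g,\chi_X\rangle_\mu$, which is manifestly a rank~$2$ operator whose nonzero eigenvalues on the span of $\chi_X,\chi_{\bar X}$ are (after $\mu$-normalization) $+1$ and $-1$; all other eigenvalues of $I-\Delta$ must vanish. Finally, $(ii)\Rightarrow(i)$ follows the same route as in Theorem~\ref{t2}: since $I-\Delta_n$ is close in spectral distance to a rank~$2$ operator with eigenvalues $\pm1$, the second eigenfunction (associated to $-1$) must be close to a $\pm$-valued step function, and this step function partitions the vertex set of $G_n$ into two parts $A_n,B_n$ with the property that almost all edges go between the two parts and the edge distribution is governed by the degree sequence, which is precisely the bipartite quasi-random condition.

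The main obstacle will be the extraction of the partition $X$ in $(ii)\Rightarrow(i)$: knowing only that $I-\Delta$ has spectrum $\{1,-1,0,0,\dots\}$ does not immediately guarantee that the associated eigenfunctions are two-valued in the sense required. I would resolve this by using Lemma~\ref{ma1} to show that $(I-\Delta)\mathbf{1}=\mathbf{1}$ (as the Perron eigenfunction) together with the symmetry $(I-\Delta)\phi_1=-\phi_1$ forces $\phi_1^2$ to be a scalar multiple of $\mathbf{1}$ under $\mu$ (by orthogonality of $\phi_0$ and $\phi_1$ combined with nonnegativity of $I-\Delta$ on indicator pairs, equation~\eqref{tt}), so $|\phi_1|$ is constant on its support and the sign partition yields $X$. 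Once $X$ is identified, $(iii)$ is recovered by spectral decomposition, and the rest of the chain gives back $(i)$ through the already-proved implication $(iii)\Rightarrow(v)$.
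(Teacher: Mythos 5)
Your proposal follows the same cyclic chain $(i)\Rightarrow(v)\Rightarrow(iv)\Rightarrow(iii)\Rightarrow(ii)\Rightarrow(i)$ as the paper's proof of Theorem~\ref{t2}, which is precisely the route the paper intends here: the text states that the proof of Theorem~\ref{t22} ``is quite similar to that for Theorem~\ref{t2} and will be omitted,'' so your argument is essentially the paper's (unwritten) proof. The one step where the analogy genuinely requires new content --- extracting the bipartition $X$ in $(ii)\Rightarrow(i)$ --- you identify and resolve correctly: writing $I-\Delta=\mathbf{1}\otimes\mathbf{1}-\phi_1\otimes\phi_1$, the nonnegativity from Lemma~\ref{ma1} applied with $S=T$ gives $\bigl(\int_S\phi_1\,\mu\bigr)^2\le\mu(S)^2$, hence $|\phi_1|\le 1$ a.e., and $\|\phi_1\|_\mu=1$ then forces $|\phi_1|=1$ a.e., so the sign partition of $\phi_1$ supplies the set $X$ (modulo the constant-factor normalization of the bilinear form in $(iii)$--$(v)$, an issue already present in the paper's own statement).
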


\section{Graphlets with rank $2$}
\label{r2}
It is quite natural to  generalize rank $1$ graphlets to  graphlets of higher ranks. The case of rank $2$ graphlets is  particularly of interest, for example,
in the sense for identifying two `communities' in one massive graph.
For two graphs with the same vertex set, the union of two graphs $G_1=(V, E_1)$ and $G_2=(V, E_2)$ has the edge set $E=E_1 \cup E_2$
 and with edge weight $w(u,v)=w_1(u,v)+w_2(u,v)$ if $w_i$ denotes the edge weights in $G_i$.  We will prove the following
 theorem for graphlets of rank $2$.

\begin{theorem}
\label{t31}
The following statements  are equivalent  for a graph sequence $G_n,  $ where $  n = 1, 2, \ldots $. Here we assume that  all $G_n$'s are connected.

\noindent
(i) The graph sequence $G_n=(V_n, \Delta_n) $ converges to  graphlets $\mathcal{G}(\Omega, \Delta)$ and 
 $I-\Delta$  has two nontrivial eigenvalues $1$ and  $\rho \in (0,1)$.
 Namely, or each $n$, $I - \Delta_n$ has all eigenvalues $o(1)$ with the exception of  two eigenvalue $1$ and $\rho_n$ where
 $\rho_n$ converges to $\rho$.
  
\noindent
(ii)  The graph sequence $G_n$ converges to the graphlets ${\mathcal G}=(\Omega, \Delta)$ which is
the union of two quasi-random graphlets (of rank $1$).
 
 \noindent
(iii)   The graph sequence $G_n$  converges to the graphlets ${\mathcal G}=(\Omega, \Delta)$ where
$\Omega$ is a measure space with measure $\mu$  where $\mu = \alpha \mu_1 + (1-\alpha) \mu_2$ for some $\alpha \in [0,1]$ and   the Laplace operator $\Delta$ on $\Omega$ satisfies
\begin{align*}
&\int_{x } f(x)(I-\Delta)g(x) \mu(x)\\
=&
~\alpha \int_{\Omega} f(x) \mu_1(x) \int_{\Omega} g(x) \mu_1(x) +
(1-\alpha) \int_{\Omega} f(x) \mu_2(x) \int_{\Omega } g(x) \mu_2(x) 
\end{align*}
for any $f, g: \Omega \rightarrow {\mathbb R}$.

\ignore{
\noindent
(iii) There exists a sequence  $\epsilon_n$ which approaches $0$ as $n$ goes to infinity such that
the Laplace operator $\Delta_n$ of $G_n$ in a graph sequence satisfies   the property  $I-\Delta_{G_n}$ has two positive eigenvalues $1$ and
$\rho_{G_n}$ with $|\rho_{G_n} -\rho| < \epsilon_n$ and all other eigenvalues have absolute values less than $\epsilon_n$. 
}

\noindent
(iv)
The degree sequence $(d_v)_{v \in V}$ of $G_n$ can be decomposed  as $d_v = d'_v + d''_v$ with $d'_v \geq 0$ and $d''_v \geq 0$. 
The adjacency matrix $A_n$ of $G_n$ satisfies:
\begin{eqnarray*}
\| D_n^{-1/2}\big(A_n-\frac{D'_nJD'_n}{\vol(G'_n)} -\frac{D''_nJD''_n}{\vol(G''_n)}\big)D_n^{-1/2} \|
=o(1) 
 \end{eqnarray*}
where $\vol(G'_n)=\sum_v d'_v$ and $\vol(G''_n)=\sum_v d''_v$.

\noindent
(v) There exists a sequence  $\epsilon_n$ which approaches $0$ as $n$ goes to infinity such that
the degree sequence $(d_v)_{v \in V}$ of $G_n$ can be decomposed  as $d_v = d'_v + d''_v$ with $d'_v \geq 0$ and $d''_v \geq 0$.
Furthermore,  for all $S, T \subseteq V_n$
\begin{eqnarray*}
\left| E_n(S,T) - \frac{\vol'(S)\vol'(T)}{\vol(G'_n)}-\frac{\vol''(S)\vol''(T)}{\vol(G''_n)}
\right| \leq \epsilon_n \sqrt{ \vol(S)\vol(T)}.
\end{eqnarray*}
\end{theorem}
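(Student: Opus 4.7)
The plan is to prove all five equivalences by establishing the implications $(ii)\Rightarrow(iv)$, $(iv)\Leftrightarrow(v)$, $(iv)\Rightarrow(iii)$, $(iii)\Rightarrow(ii)$, and $(iii)\Leftrightarrow(i)$, closely following the template of Theorem \ref{t2} but carrying the second eigenvalue $\rho$ and an additional convex parameter $\alpha$ throughout. The easier steps reduce to applying Theorem \ref{t2} and Theorem \ref{2norm} componentwise to the two pieces of the decomposition. The substantive new ingredient is the passage between the purely spectral condition (i) and the explicit two-measure decomposition in (iii), and that is where I expect the main obstacle to lie.

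For $(ii)\Rightarrow(iv)$, the union structure gives $A_n=A'_n+A''_n$ with $G'_n,G''_n$ each quasi-random and with degree sequences $d'_v,d''_v$ summing to $d_v$. Applying Theorem \ref{t2}(iv) separately to each piece yields the two rank-$1$ approximations, which sum to the form claimed in (iv) once both are conjugated by $D_n^{-1/2}$ rather than by $(D'_n)^{-1/2}$ and $(D''_n)^{-1/2}$. The equivalence $(iv)\Leftrightarrow(v)$ is then immediate from Theorem \ref{2norm} applied to the difference operator $D_n^{-1/2}(A_n-D'_nJD'_n/\vol(G'_n)-D''_nJD''_n/\vol(G''_n))D_n^{-1/2}$: the spectral bound and the discrepancy bound are equivalent by exactly the same argument used there.

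For $(iv)\Rightarrow(iii)$ I would pass to the limit: set $\alpha=\lim\vol(G'_n)/\vol(G_n)$ (along a subsequence), and let $\mu_1,\mu_2$ be the limits of the normalized degree measures $\mu'_n(v)=d'_v/\vol(G'_n)$ and $\mu''_n(v)=d''_v/\vol(G''_n)$, so that $\mu=\alpha\mu_1+(1-\alpha)\mu_2$. Substituting the rank-$2$ approximation of (iv) into $\langle f,(I-\Delta_n)g\rangle_{\mu_n}=\sum_{u,v}f(u)A_n(u,v)g(v)/\vol(G_n)$ and taking $n\to\infty$ recovers the bilinear form in (iii). The reverse $(iii)\Rightarrow(ii)$ is by construction: identify each of the two rank-$1$ summands in (iii) as the limit operator of a quasi-random graphlet via Theorem \ref{t2}(iii), and then $\mathcal{G}$ is the union of these two graphlets with weights $\alpha$ and $1-\alpha$.

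The crux is $(iii)\Leftrightarrow(i)$. For $(iii)\Rightarrow(i)$, writing $\phi=(\mu_1-\mu_2)/\mu$ and $\rho_i=\mu_i/\mu$, the identity $\alpha\rho_1+(1-\alpha)\rho_2=1$ forces $\rho_1=1+(1-\alpha)\phi$ and $\rho_2=1-\alpha\phi$, and a direct expansion then collapses the $\mu$-kernel of $I-\Delta$ to
\begin{equation*}
K_\mu(x,y)=\alpha\rho_1(x)\rho_1(y)+(1-\alpha)\rho_2(x)\rho_2(y)=1+\alpha(1-\alpha)\phi(x)\phi(y),
\end{equation*}
a manifestly rank-$2$ operator with eigenvalues $1$ (eigenfunction $\mathbf{1}$) and $\rho=\alpha(1-\alpha)\|\phi\|_\mu^2$ (eigenfunction $\phi/\|\phi\|_\mu$). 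For the converse $(i)\Rightarrow(iii)$, given orthonormal eigenfunctions $\mathbf{1}$ and $\psi$ (with $\int\psi\,d\mu=0$, $\|\psi\|_\mu=1$) for eigenvalues $1$ and $\rho$, I would set $\phi=c\psi$ with $c=\sqrt{\rho/[\alpha(1-\alpha)]}$ and recover the candidate measures $\mu_1=\mu(1+(1-\alpha)c\psi)$, $\mu_2=\mu(1-\alpha c\psi)$. The hard part will be ensuring that some admissible $\alpha\in(0,1)$ makes both candidate densities nonnegative; this reduces to the two inequalities $\rho M^2\alpha\leq 1-\alpha$ and $\rho m^2(1-\alpha)\leq \alpha$, where $m,M$ are the essential infimum and supremum of $\psi$. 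An elementary calculation shows a common $\alpha$ exists iff $\rho|m|M\leq 1$, equivalently $1+\rho mM\geq 0$, and this last inequality is precisely the nonnegativity of $K_\mu(x,y)=1+\rho\psi(x)\psi(y)$ at extreme values of $\psi$, forced by Lemma \ref{ma1}(\ref{tt}) applied to the product of thin level sets $\{x:\psi(x)\geq M-\epsilon\}\times\{y:\psi(y)\leq m+\epsilon\}$ as $\epsilon\to 0$. Hence an admissible $\alpha$ always exists, (iii) is realized, and the cycle closes via $(ii)\Rightarrow(iii)\Rightarrow(i)$ already in hand.
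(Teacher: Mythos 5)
Your proposal is correct, and on the easy implications ($(ii)\Rightarrow(iv)\Leftrightarrow(v)$, $(iv)\Leftrightarrow(iii)$) it matches the paper, which likewise routes them through Theorem \ref{t2}, Theorem \ref{2norm} and the algebraic identity behind Lemma \ref{mxx}. The algebraic core of your crux is also the same as the paper's: your kernel identity $\alpha\rho_1(x)\rho_1(y)+(1-\alpha)\rho_2(x)\rho_2(y)=1+\alpha(1-\alpha)\phi(x)\phi(y)$ is exactly Claim A in the proof of Theorem \ref{q13}, where the candidate densities appear as $f_1=\alpha D\mathbf{1}-D^{1/2}\phi_1\sqrt{\rho_1\alpha(1-\alpha)\vol(G)}$ and $f_2=(1-\alpha)D\mathbf{1}+D^{1/2}\phi_1\sqrt{\rho_1\alpha(1-\alpha)\vol(G)}$. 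Where you genuinely diverge is in handling nonnegativity of the two candidate densities. The paper works at the level of a single finite graph whose spectrum is only approximately rank $2$: it picks $\alpha$ by balancing the negative masses, $\sum_{x\in X}|f_1(x)|=\sum_{y\in Y}|f_2(y)|$, uses positivity of $\chi_X^*A\chi_Y$ together with the spectral decomposition to show this common negative mass is at most $\tfrac{\epsilon}{4}\vol(G)$, and then truncates (setting $d'_v=0$ on $X$ and $d'_v=d_v$ on $Y$), paying an $O(\epsilon\vol(S))$ error in the discrepancy bound. You instead work entirely in the limit, observe that admissibility of $\alpha$ reduces to the interval condition $\rho m^2/(1+\rho m^2)\le\alpha\le 1/(1+\rho M^2)$, and prove this interval is nonempty because $1+\rho mM\ge 0$ is forced by the positivity $\langle\chi_S,(I-\Delta)\chi_T\rangle_\mu\ge 0$ of Lemma \ref{ma1} applied to thin level sets at the two extremes of $\psi$. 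That existence argument for an exactly nonnegative decomposition is not in the paper and is a clean addition; what the paper's finite version buys in exchange is an explicit quantitative statement (Theorem \ref{q13}) about each $G_n$, which feeds directly into the $\epsilon_n$ of statement (v), whereas you must recover (v) by going back down through $(iii)\Rightarrow(ii)\Rightarrow(iv)\Rightarrow(v)$. Two small points you should make explicit: the same positivity argument is needed first to show $\psi$ is essentially bounded (so that $m$ and $M$ exist), since an unbounded $\psi$ would make $1+\rho\psi(x)\psi(y)$ negative on a set of positive measure; and the degenerate cases $\mu_1=\mu_2$ (rank collapses to $1$) and $\int\mu_1\mu_2/\mu=0$ (which forces $\rho=1$) must be excluded for $(iii)\Rightarrow(i)$, though the paper is equally silent on these. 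Incidentally, your coefficients $\mu_1=\mu(1+(1-\alpha)c\psi)$, $\mu_2=\mu(1-\alpha c\psi)$ are the ones consistent with $\mu=\alpha\mu_1+(1-\alpha)\mu_2$; the factors printed in the paper's Theorem \ref{q131} appear to have $\alpha$ and $1-\alpha$ transposed.
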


Before we proceed to prove Theorem \ref{t31}, we first prove several key facts that will be used in the proof.
\ignore{
If we require that the graph sequences are not too sparse, then there is more further equivalent statement as stated in Theorem \ref{t4}.
The proof of Theorem \ref{t3}  and Theorem \ref{t4}  will be included in the next section.

\begin{theorem}
\label{t4}
If the graph sequence satisfies the additional condition that $G_n$  on $n$ vertices has the minimum degree at least $c \log n$ for some appropriate
constant $c$, then the statements $(i) \sim (v)$ in Theorem \ref{t3} are equivalent to the following statement:

\noindent
(v) There exists a sequence  $\epsilon_n$ which approaches $0$ as $n$ goes to infinity such that
the adjacency matrix $A_n$ of $G_n$ satisfies   the property  that
the degree sequence $(d_v)_{v \in V}$ of $G$ can be decomposed  as $d_v = d'_v + d''_v$ with $d'_v \geq 0$ and $d''_v \geq 0$ and  $G_n$ is the union of two  graphs $G'_n$ and $G''_n$ with degree sequences $(d'_v)$ and $(d''_v)$, respectively.  Furthermore $G'_n$ and $G''_n$ form quasi-random graph sequences satisfying property $P(\epsilon_n)$.

\end{theorem}

}

\begin{lemma}
\label{mxx}
Suppose that  integers $d_v, d'_v$ and $d''_v$, for $v$ in $V$ satisfy $d_v = d'_v + d''_v$ and $d'_v, d''_v \geq 0 $. Let
$D$, $D'$ and $D''$ denote the diagonal matrices with diagonal entries $d_v$, $d'_v $ and $d''_v$, respectively. Then the matrix $X$ defined by
\[
X= D^{-1/2}\left(\frac{D'JD'}{\vol(G')}+\frac{D''JD''}{\vol(G'')}    \right) D^{-1/2}
\]
has two nonzero eigenvalues $1$ and $\eta$  satisfying
\[
\eta=1-\left(\sum_v \frac{d'_vd''_v}{d_v}\right) \left(\frac {\vol(G)} {\vol(G')\vol(G'')}\right).
\]
The eigenvector $\xi$ which is associated with eigenvalue $\eta$ can be written as
\[
\xi = D^{-1/2} \left(\frac{D'}{\vol(G')} -\frac{D''}{\vol(G'')} \right) {\mathbf 1}.
\]
\end{lemma}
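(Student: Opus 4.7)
The plan is to write $X$ as a sum of two rank-one matrices and then reduce the spectral problem to a $2 \times 2$ calculation. Set $u = D^{-1/2} D' \mathbf 1/\sqrt{\vol(G')}$ and $v = D^{-1/2} D'' \mathbf 1/\sqrt{\vol(G'')}$. Because $D' J D' = (D'\mathbf 1)(D'\mathbf 1)^T$, we obtain $X = u u^T + v v^T$, so $X$ has rank at most $2$ and every nonzero eigenvector lies in $\mathrm{span}(u,v)$. The nonzero eigenvalues of $X$ therefore coincide with the eigenvalues of the $2 \times 2$ Gram matrix
\[
M = \begin{pmatrix} u^T u & u^T v \\ u^T v & v^T v \end{pmatrix}.
\]

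Next, I would introduce the abbreviations $\alpha = \sum_v (d'_v)^2/d_v$, $\beta = \sum_v d'_v d''_v/d_v$, and $\gamma = \sum_v (d''_v)^2/d_v$; a direct computation then gives $u^T u = \alpha/\vol(G')$, $v^T v = \gamma/\vol(G'')$, and $u^T v = \beta/\sqrt{\vol(G')\vol(G'')}$. The decomposition $d_v = d'_v + d''_v$ yields the two identities $\alpha + \beta = \vol(G')$ and $\beta + \gamma = \vol(G'')$, which will be the only arithmetic facts needed. Using $\alpha = \vol(G') - \beta$ and $\gamma = \vol(G'') - \beta$, the trace of $M$ simplifies to
\[
\mathrm{trace}(M) = \frac{\alpha}{\vol(G')} + \frac{\gamma}{\vol(G'')} = 2 - \frac{\beta\,\vol(G)}{\vol(G')\vol(G'')},
\]
and the determinant simplifies to
\[
\det(M) = \frac{\alpha\gamma - \beta^2}{\vol(G')\vol(G'')} = 1 - \frac{\beta\,\vol(G)}{\vol(G')\vol(G'')}.
\]
Comparing trace and determinant, the two eigenvalues of $M$, and hence the nonzero eigenvalues of $X$, are $1$ and the claimed $\eta$.

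Finally, I would exhibit the eigenvectors explicitly. The eigenvalue $1$ is realized by $\phi_0 = D^{1/2}\mathbf 1/\sqrt{\vol(G)}$: since $J D'\mathbf 1 = \vol(G')\mathbf 1$ one checks $\frac{1}{\vol(G')}\, D' J D'\mathbf 1 = D'\mathbf 1$ and similarly for $D''$, so $X\phi_0 = \phi_0$. For $\eta$, direct substitution of the given $\xi = D^{-1/2}\bigl(D'/\vol(G') - D''/\vol(G'')\bigr)\mathbf 1$ into $X\xi$, expanding the four resulting scalar-times-vector terms and then collapsing them with $\alpha+\beta = \vol(G')$ and $\beta+\gamma = \vol(G'')$, produces a common scalar $\eta$ times $\xi$. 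The one step requiring genuine care is the bookkeeping that simplifies $\alpha\gamma - \beta^2$ into $\vol(G')\vol(G'') - \beta\,\vol(G)$; beyond that, no substantive obstacle is expected, since the lemma is ultimately a short linear-algebraic identity.
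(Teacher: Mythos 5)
Your proposal is correct, and every computation in it checks out: $u^\top u=\alpha/\vol(G')$, $v^\top v=\gamma/\vol(G'')$, $u^\top v=\beta/\sqrt{\vol(G')\vol(G'')}$, the identities $\alpha+\beta=\vol(G')$, $\beta+\gamma=\vol(G'')$, and the simplification $\alpha\gamma-\beta^2=\vol(G')\vol(G'')-\beta\,\vol(G)$ all hold, so the trace and determinant do give $(\lambda-1)(\lambda-\eta)$ as the characteristic polynomial of the $2\times 2$ block. Where you differ from the paper is in how the eigenvalues are obtained: the paper never forms a Gram matrix, but instead guesses both eigenvectors up front ($\phi_0=D^{1/2}\mathbf 1/\sqrt{\vol(G)}$ and the stated $\xi$), verifies $X\phi_0=\phi_0$ and $X\xi=\eta\,\xi$ by a direct four-term expansion, and then invokes rank $\le 2$ to conclude the remaining eigenvalues vanish. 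Your route \emph{derives} the two eigenvalues from $\mathrm{trace}(M)=\det(M)+1$ rather than verifying a formula handed to you, which is more self-contained; it also generalizes immediately to $k$ summands, where the nonzero spectrum of $\sum_i u_iu_i^\top$ is that of the $k\times k$ Gram matrix --- essentially the content of the paper's later Lemma on rank-$k$ decompositions. The price is that the Gram-matrix step alone does not produce the eigenvector $\xi$, so you still need the same direct substitution the paper performs for its Claim~2; your final paragraph acknowledges this, and that verification is exactly the paper's computation. One small wording point: ``every nonzero eigenvector lies in $\mathrm{span}(u,v)$'' should read ``every eigenvector with nonzero eigenvalue,'' and, as in the paper, the statement that both eigenvalues are nonzero implicitly assumes the degenerate case $d'_v/\vol(G')\equiv d''_v/\vol(G'')$ (where $\xi=0$ and $\eta=0$) is excluded.
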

\begin{proof}
The lemma will follow from the following two claims.

\noindent
{\it Claim 1:}~  $\phi_0=D^{1/2} {\mathbf 1}/\sqrt{\vol(G)}$ is an eigenvector of $X$ and $M=D^{-1/2}A D^{-1/2}$.\\
{\it Proof of Claim 1:}
Following the definition of $M$, $\phi_0$ is an eigenvector of $M$.  We can directly verify that  $\phi_0$ is also an eigenvector of $X$ as follows:
\begin{align*}
X \phi_0&= D^{-1/2}\left(  \frac{D'J D'}{\vol(G')}+\frac{D''JD''}{\vol(G'')}    \right) \frac {\mathbf 1}{\sqrt{\vol(G)}}\\
&= D^{-1/2} \left( D' + D''\right) \frac{\mathbf 1}{\sqrt{\vol(G)}}\\
&= D^{-1/2} \frac{D {\mathbf 1}}{\sqrt{\vol(G)}}\\
&= \frac{D^{1/2}{\mathbf 1}}{\sqrt{\vol(G)}}.
\end{align*}

\noindent
{\it Claim 2:}~ $\eta$ is an eigenvalue of $X$ with the associated eigenvector $\xi$.

\noindent
{\it Proof of Claim 2:} We consider
\begin{align*}
X \xi &= D^{-1/2}\left(  \frac{D'J D'}{\vol(G')}+\frac{D''J D''}{\vol(G'')}    \right) D^{-1}  \left( \frac{D'{\mathbf 1}}{\vol(G')} -\frac{D''{\mathbf 1}}{\vol(G'') }\right) 
\\
&= D^{-1/2}\left( \frac{D'{\mathbf 1} }{\vol(G')} \cdot \frac {{\mathbf 1}^*D' D^{-1} D'{\mathbf 1} }{\vol(G')}
- \frac{D'{\mathbf 1} }{\vol(G')} \cdot \frac {{\mathbf 1}^*D' D^{-1} D"{\mathbf 1} }{\vol(G'')} \right)\\
&~~~ + D^{-1/2}\left( \frac{D''{\mathbf 1} }{\vol(G'')} \cdot \frac {{\mathbf 1}^*D'' D^{-1} D'{\mathbf 1} }{\vol(G')}
- \frac{D''{\mathbf 1} }{\vol(G'')} \cdot \frac {{\mathbf 1}^*D'' D^{-1} D''{\mathbf 1} }{\vol(G'')}\right)\\
&= D^{-1/2} \frac{D'{\mathbf 1} }{\vol(G')} \left( \frac {{\mathbf 1}^*D' D^{-1} (D-D''){\mathbf 1} }{\vol(G')}-\frac {{\mathbf 1}^*D' D^{-1} D''{\mathbf 1} }{\vol(G'')}\right) 
\\
&~~~+ D^{-1/2} \frac{D''{\mathbf 1} }{\vol(G'')}  \left( \frac {{\mathbf 1}^*D'' D^{-1} D'{\mathbf 1} }{\vol(G')}-\frac {{\mathbf 1}^*D'' D^{-1} (D-D'){\mathbf 1} }{\vol(G'')}\right)\\
&= D^{-1/2} \left( \frac{D'{\mathbf 1} }{\vol(G')}-\frac{D''{\mathbf 1} }{\vol(G'')} \right)  \left(1- {\mathbf 1}^*D' D^{-1} D''{\mathbf 1} \bigg(\frac {1}{\vol(G')}+ \frac 1 {\vol(G'')} \bigg) \right)\\
&= \eta~ \xi
\end{align*}
as claimed. 

Since $X$ has  rank 2 (i.e., it is the sum of two rank one matrices),   and we have shown that $X$ has eigenvalues  $1$, $\eta$, then the rest  of the eigenvalues are $0$.
\end{proof}

We now apply Lemma \ref{mxx}  using the fact that the normalized adjacency matrix $M=D^{-1/2}A D^{-1/2}$ has eigenvalues $1$ and $\rho=1-\lambda_1$.  Together with Theorem \ref{t2}, we have the following:
\ignore{To establish the implication $(vi) \Rightarrow (iii)$, we will prove the following somewhat stronger statement.}

\begin{theorem}\label{tt1}
Suppose $G$ is the union of two graphs  $G'$ and $G''$with degree sequences $(d'_v)$ and $(d''_v)$ respectively.  Assume both $G'$ and $G''$ satisfy the quasi-random property
$P(\epsilon/2)$ (where $P$ is  one of the equivalent quasi-random properties in Theorem \ref{t2}). Suppose the normalized Laplacian of $G$ has  eigenvalues $\lambda_i=1-\rho_i$, for $i= 0, 1, \ldots, n-1$ with associated orthonormal eigenvectors $\phi_i$.
Then we have:
\begin{enumerate}
\item $\rho_0=1$,
\item  $ \rho_1$ satisfies
\[ 
- \epsilon <1-\rho_1 - \left(\sum_v \frac{d'_vd''_v}{d_v}\right) \left(\frac {\vol(G)} {\vol(G')\vol(G'')}\right) <\epsilon 
\]
\item $|\rho_i| \leq \epsilon$ for $i > 1$.
\item The eigenvector $\phi_1$ associated with $\lambda_1$ can be written as
\[
\phi_1 = D^{-1/2} \left( \frac{D'{\mathbf 1}}{\vol(G')} - \frac{D''{\mathbf 1}}{\vol(G'')}\right) + r
\]
with $\|r\| \leq \epsilon$, where $D'$ and $D''$ denote the diagonal degree matrices of $G'$ and $G''$, respectively.
\end{enumerate}
\end{theorem}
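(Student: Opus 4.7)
The plan is to prove Theorem \ref{tt1} by a spectral perturbation argument built around Lemma \ref{mxx}. Let $M = D^{-1/2} A D^{-1/2}$ denote the symmetric normalized adjacency of $G$ (whose eigenvalues are $\rho_i = 1 - \lambda_i$), and let $X = D^{-1/2}\bigl(D'JD'/\vol(G') + D''JD''/\vol(G'')\bigr)D^{-1/2}$ be the rank-$2$ model from Lemma \ref{mxx}. The first step would be to show
\[
\|M - X\| \leq \epsilon.
\]
For this I would apply the spectral form (iv) of Theorem \ref{t2} to each of $G'$ and $G''$, which gives $\|(D')^{-1/2}(A' - D'JD'/\vol(G'))(D')^{-1/2}\| \leq \epsilon/2$ and an analogous bound for $G''$, and then convert to the common $D^{-1/2}$-normalization via the substitution $y = (D')^{1/2} D^{-1/2} x$. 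Because $d'_v \leq d_v$ one has $\|y\| \leq \|x\|$, and the identity $x^{T} D^{-1/2} B\, D^{-1/2} x = y^{T} (D')^{-1/2} B\, (D')^{-1/2} y$ (with $B = A' - D'JD'/\vol(G')$ and the usual pseudo-inverse convention on vertices with $d'_v = 0$) transfers the bound. Since $A = A' + A''$, the triangle inequality yields $\|M - X\| \leq \epsilon$.

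Given this perturbation bound, the eigenvalue statements are immediate. By Lemma \ref{mxx}, the operator $X$ has eigenvalues $1$, $\eta$, and $0$ (with multiplicity $n-2$), with eigenvectors $\phi_0 = D^{1/2}\mathbf{1}/\sqrt{\vol(G)}$ and $\xi = D^{-1/2}(D'\mathbf{1}/\vol(G') - D''\mathbf{1}/\vol(G''))$. Because $G$ is connected, $\phi_0$ is also a unit eigenvector of $M$ with eigenvalue exactly $1$, giving $\rho_0 = 1$. Weyl's inequality applied to $M = X + (M - X)$ then forces $|\rho_i - \mu_i| \leq \|M - X\| \leq \epsilon$ for every $i$, where $\mu_0 \geq \mu_1 \geq \cdots$ are the sorted eigenvalues of $X$; this establishes items (1), (2), and (3).

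For item (4), I would decompose the unit eigenvector $\phi_1$ in the orthonormal eigenbasis of $X$. Orthogonality $\phi_1 \perp \phi_0$ eliminates the $\phi_0$-component, so $\phi_1 = \beta \hat\xi + v$ with $v$ in the kernel of $X$ and $\hat\xi = \xi/\|\xi\|$. Testing the identity $(M - X)\phi_1 = (\rho_1 I - X)\phi_1$ against $v$ (on which $X$ vanishes) and using the fact that $\rho_1$ is close to $\eta$ and bounded away from $0$ by the rank-$2$ hypothesis, one reads off $\|v\| = O(\epsilon)$. Combined with a separate verification that $\|\xi\|$ is within $O(\epsilon)$ of $1$, so that $\hat\xi$ may be replaced by $\xi$ with the error absorbed into $r$, this yields $\phi_1 = \xi + r$ with $\|r\| \leq \epsilon$ after an appropriate sign choice for $\phi_1$.

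The main obstacle is the eigenvector step. Eigenvector perturbation is always more delicate than eigenvalue perturbation, and here the statement is phrased with the unnormalized $\xi$: one needs $\|\xi\| = 1 + O(\epsilon)$, which does not follow from the perturbation bound alone but must be extracted from the explicit form of $\xi$ together with the scalings of $\vol(G')$ and $\vol(G'')$ against $\vol(G)$. A clean $\|r\| \leq \epsilon$ bound (rather than one with a $1/\mathrm{gap}$ factor) additionally requires that $\eta$ be separated from both $0$ and $1$, which is implicit in the hypothesis that $\lambda_1 = 1 - \rho_1$ is genuinely the second eigenvalue of the rank-$2$ graphlet.
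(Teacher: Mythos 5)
Your proposal follows essentially the same route as the paper's own argument: decompose $M=D^{-1/2}AD^{-1/2}$ as the rank-two model $X$ of Lemma \ref{mxx} plus an error term controlled by applying property (iv) of Theorem \ref{t2} to $G'$ and $G''$ separately (using $d'_v\le d_v$ to pass from the $D'$-normalization to the $D$-normalization), then Weyl's inequality for the eigenvalues and a projection onto the eigenbasis of $X$ for $\phi_1$. The normalization issue you flag for $\xi$ is real --- the paper's proof also implicitly treats $\xi$ as a unit vector when writing $X=\phi_0\phi_0^*+\eta\,\xi\xi^*$ --- but this does not change the substance of the argument.
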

\ignore{
\begin{proof}
Suppose $G$ is the union of two graphs $G'$ and $G''$ . Then the edge sets $E'$ and $E''$ of $G'$ and $G''$, respectively, are disjoint. Thus, the adjacency matrix $A_G$ of $G$ is the sum:
\[
A_G=A_{G'}+A_{G''}.
\]
From the proof of Theorem \ref{t2}, we can write
\begin{align*}
A_{G'} &= \frac{D'JD'}{\vol(G')} + D'^{1/2} R' D'^{1/2}\\
A_{G''} &= \frac{D''J D''}{\vol(G'')} + D''^{1/2} R'' D''^{1/2} 
\end{align*}
where $\|R'\|, \|R''\| \leq \epsilon/2$. 
   
Then the normalized adjacency matrix $M$ of $G$ can be written as
\begin{align*}
M&=D^{-1/2} A D^{-1/2}\\
&=D^{-1/2}\left( \frac{D'JD'}{\vol(G')}+\frac{D''JD''}{\vol(G'')}    \right) D^{-1/2}\\
&~~~+ D^{-1/2} \left(  D'^{1/2} R' D^{1/2}+ D''^{1/2} R'' D^{1/2}   \right) D^{-1/2}\\
&= X + Y
\end{align*}
where $Y=D^{-1/2} \left( D'^{1/2} R' D'^{1/2}+ D''^{1/2} R'' D''^{1/2}   \right) D''^{-1/2}$  has its spectral radius  $\|Y\| <  \epsilon$.

We now use Lemma \ref{mxx}.  Since $M$ differs from $X$ by $Y$,  the eigenvalues $\rho_i$ of $M$ satisfy that $|\rho_1-\eta| \leq  \epsilon$ and $|\rho_i | \leq \epsilon$ for $i \geq 2$.

It remains to show that $z=\phi_1 - \xi$ satisfies $\|z \| \leq 2 \epsilon$. We consider
\begin{align*}
\rho_1 \phi_1 &= M \phi_1 = X \phi_1 + Y \phi_1\\
&= \big(\phi_0\phi_0^*+\eta\xi\xi^*\big)\phi_1+Y\phi_1 \\
&= \eta\langle \phi_1, \xi \rangle \xi + Y \phi_1\\
&=  \eta\langle \phi_1, \xi  \rangle \xi + Y \phi_1
\end{align*}
since $\langle \phi_1, \phi_0 \rangle =0$.
Thus $\langle \phi_1, \xi \rangle^2 \geq 1- \epsilon$ and therefore we can choose the sign for $\xi$ such
that $z=\phi_1 - \xi$  satisfies $\|z\| \leq  \epsilon$.
The proof of Theorem~\ref{tt1} is complete.
\end{proof}
}
\begin{theorem}
\label{mxx1}
Suppose a graphlets  ${\mathcal G}=(\Omega, \Delta)$ is the union of two graphlets ${\mathcal G} ={\mathcal G}_1 \cup {\mathcal G}_2 $ and ${\mathcal G}_i$ are quasi-random graphlets.
Then $I-\Delta$ has two nontrivial eigenvalues $1$ and $\eta$ where $0 < \eta < 1$  satisfies
\[ 1-\eta=\int_{\Omega} \frac{\mu_1(x)\mu_2(x)}{\mu(x)}=\langle \frac{\mu_1}{\mu}, \frac{\mu_2}{\mu} \rangle_\mu,\]
where $\mu_i$ denotes the measure on $\Omega_i$. 
\end{theorem}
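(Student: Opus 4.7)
The plan is to run the continuous analogue of the calculation in Lemma \ref{mxx} and Theorem \ref{tt1}, using the quasi-random characterization in Theorem \ref{t2}(iii) for each factor. First I would argue that the union structure $\mathcal{G} = \mathcal{G}_1 \cup \mathcal{G}_2$, together with the edge-additivity of an approximating discrete sequence $G_n = G_n^{(1)} \cup G_n^{(2)}$, forces the decomposition $\mu = \alpha\mu_1 + (1-\alpha)\mu_2$ with $\alpha = \lim_n \vol(G_n^{(1)})/\vol(G_n) \in (0,1)$, and that the associated bilinear form splits additively. Passing to the limit in
\[ \vol(G_n)\,\langle f,(I-\Delta_n)g\rangle_{\mu_n} \;=\; \sum_{i=1}^{2}\vol(G_n^{(i)})\,\langle f,(I-\Delta_n^{(i)})g\rangle_{\mu_n^{(i)}} \]
and invoking Theorem \ref{t2}(iii) on each quasi-random factor yields
\[ \langle f,(I-\Delta)g\rangle_\mu \;=\; \alpha\Bigl(\int f\,d\mu_1\Bigr)\Bigl(\int g\,d\mu_1\Bigr) + (1-\alpha)\Bigl(\int f\,d\mu_2\Bigr)\Bigl(\int g\,d\mu_2\Bigr), \]
which is precisely condition (iii) of Theorem \ref{t31} with $\alpha_1=\alpha$, $\alpha_2=1-\alpha$.

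Next, since $\alpha\in(0,1)$ we have $\mu_i \ll \mu$, so the Radon-Nikodym derivatives $\rho_i = d\mu_i/d\mu$ exist, are nonnegative, and satisfy $\alpha\rho_1 + (1-\alpha)\rho_2 = 1$ almost everywhere. The bilinear identity above identifies $I-\Delta$ as the pointwise operator
\[ ((I-\Delta)g)(x) \;=\; \alpha\Bigl(\int g\,d\mu_1\Bigr)\rho_1(x) + (1-\alpha)\Bigl(\int g\,d\mu_2\Bigr)\rho_2(x), \]
whose range lies in the two-dimensional subspace spanned by $\rho_1$ and $\rho_2$. Every nontrivial eigenfunction therefore lies in this span, and the eigenvalue problem collapses to diagonalizing a $2\times 2$ matrix.

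Writing $g = c_1\rho_1 + c_2\rho_2$ and setting $G_{ij} = \int \rho_i\rho_j\,d\mu$, the operator acts on coefficient vectors $(c_1,c_2)^T$ via
\[ M \;=\; \begin{pmatrix} \alpha G_{11} & \alpha G_{12} \\ (1-\alpha)G_{12} & (1-\alpha)G_{22} \end{pmatrix}. \]
Because $\int \rho_j\,d\mu = 1$ for $j = 1, 2$, I obtain the two identities $\alpha G_{11} + (1-\alpha)G_{12} = 1$ and $\alpha G_{12} + (1-\alpha)G_{22} = 1$; the first identifies the coefficient vector $(\alpha,1-\alpha)^T$, i.e.\ the constant function $\mathbf{1} = \alpha\rho_1 + (1-\alpha)\rho_2$, as an eigenvector with eigenvalue $1$. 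The remaining eigenvalue is then $\eta = \mathrm{tr}(M) - 1 = \alpha G_{11} + (1-\alpha)G_{22} - 1$.

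Finally, substituting $(1-\alpha)G_{22} = 1 - \alpha G_{12}$ gives $\eta = \alpha(G_{11}-G_{12})$, and then using $\alpha G_{11} = 1-(1-\alpha)G_{12}$ yields
\[ 1-\eta \;=\; \alpha G_{12} + (1-\alpha)G_{12} \;=\; G_{12} \;=\; \int_\Omega \rho_1\rho_2\,d\mu \;=\; \Bigl\langle \frac{\mu_1}{\mu},\frac{\mu_2}{\mu}\Bigr\rangle_\mu, \]
which is the claimed formula. The strict bounds $\eta > 0$ (failing only when $\mu_1 = \mu_2$, by Cauchy-Schwarz applied to $\rho_1,\rho_2$) and $\eta < 1$ (failing only when $\mu_1$ and $\mu_2$ have essentially disjoint supports, forcing $\mathcal{G}$ to split into two disconnected pieces) come out of standard estimates on $G_{ij}$. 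The main subtlety I anticipate is the rigorous interpretation of the symbolic expression $\mu_1(x)\mu_2(x)/\mu(x)$ in the statement: because $\mu$ dominates both $\mu_i$, it is unambiguously $\int \rho_1\rho_2\,d\mu$ via Radon-Nikodym, so no additional regularity hypothesis is required.
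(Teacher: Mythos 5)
Your proposal is correct and is essentially the continuum version of the paper's own argument: the paper proves this by substituting $\mu_1(v)=d'_v/\vol(G')$, $\mu_2(v)=d''_v/\vol(G'')$ into Lemma \ref{mxx} (where $I-\Delta$ is exhibited as the sum of two rank-one operators $D'JD'/\vol(G')+D''JD''/\vol(G'')$ and the two nonzero eigenvalues are found by displaying the eigenvectors explicitly) and then letting $n\to\infty$ via Theorem \ref{tt1}. Your reduction to the $2\times 2$ Gram matrix $M$ with the trace identity $\eta=\mathrm{tr}(M)-1$ is the same rank-two diagonalization, differing only in that you extract the second eigenvalue from the trace rather than from the explicit eigenvector $\xi$, and in that you correctly flag the degenerate cases ($\mu_1=\mu_2$ giving $\eta=0$, disjoint supports giving $\eta=1$) that the paper leaves implicit.
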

\proof
The proof follows immediately from Lemma \ref{mxx} by substituting  $\mu_1(v)=d'(v)/\vol(G')$ and $\mu_2(v)=d''(v)/\vol(G'')$ in Lemma \ref{mxx} and Theorem 
\ref{tt1} before taking limit as $n$ goes to infinity.
\qed

 In the other direction, we prove the following:

\begin{theorem}
\label{q13}
Suppose that the normalized adjacency matrix of  a graph $G$ has  two nontrivial positive eigenvalues   $1$ and $\rho$ and the other eigenvalues satisfy $|\rho_i | \leq \epsilon$ for $2 \leq i \leq n-1$. Then for each vertex $v$, the degree $d_v$ can be written as $d_v = d'_v + d''_v$, with $d'_v, d''_v \geq 0$, so that for any subset $S$ of vertices, the number $E(S)$ of ordered pairs $(u,v)$, with $u,v \in S$ and $\{u,v\} \in E$,  satisfies
\[ 
\left| E(S)- \frac{\vol'(S)^2}{\vol'(G)}-\frac{\vol''(S)^2}{\vol''(G)} \right| \leq 2 \epsilon \vol(S)
\]
where $\vol'(S)=\sum_{v \in S} d'_v$ and $\vol''(S)=\sum_{v \in S} d''_v$.
\end{theorem}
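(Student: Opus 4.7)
The plan is to read off the decomposition $d_v=d'_v+d''_v$ from the spectral structure of $G$, matching the rank-$2$ approximation of the normalized adjacency matrix against the canonical form of Lemma~\ref{mxx}, and then reducing the discrepancy estimate to the small spectral remainder.

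Let $M=D^{-1/2}AD^{-1/2}$ have orthonormal eigenvectors $\phi_0,\phi_1,\dots,\phi_{n-1}$ and eigenvalues $1,\rho,\rho_2,\dots$, where $\phi_0=D^{1/2}\mathbf{1}/\sqrt{\vol(G)}$. The hypothesis gives $M=\phi_0\phi_0^*+\rho\,\phi_1\phi_1^*+R$ with $\|R\|\le\epsilon$. Set $\mathbf{x}=D^{1/2}\phi_1$, so that $\mathbf{x}(v)=\sqrt{d_v}\,\phi_1(v)$, $\sum_v\mathbf{x}(v)=0$, and $\sum_v\mathbf{x}(v)^2/d_v=1$. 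Since $A=D^{1/2}MD^{1/2}$, for any $S\subseteq V(G)$,
\[
E(S)=\chi_S^*A\chi_S=\frac{\vol(S)^2}{\vol(G)}+\rho\Big(\sum_{v\in S}\mathbf{x}(v)\Big)^2+\chi_S^*D^{1/2}RD^{1/2}\chi_S,
\]
and the remainder is bounded in absolute value by $\|R\|\cdot\|D^{1/2}\chi_S\|^2\le\epsilon\,\vol(S)$.

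The ansatz for the decomposition comes from the identity that, for any nonnegative $d_v=d'_v+d''_v$ with $\vol(G')=\sum_v d'_v$ and $\vol(G'')=\vol(G)-\vol(G')$, direct expansion gives
\[
\frac{\vol'(S)^2}{\vol(G')}+\frac{\vol''(S)^2}{\vol(G'')}-\frac{\vol(S)^2}{\vol(G)}=\frac{\vol(G)}{\vol(G')\vol(G'')}\Big(\sum_{v\in S}\big(d'_v-\tfrac{\vol(G')}{\vol(G)}d_v\big)\Big)^2.
\]
Taking $\vol(G')=\vol(G'')=\vol(G)/2$ and defining
\[
d'_v=\tfrac12 d_v+\tfrac12\sqrt{\rho\vol(G)}\,\mathbf{x}(v),\qquad d''_v=d_v-d'_v,
\]
one verifies $\sum_{v\in S}(d'_v-\tfrac12 d_v)=\tfrac12\sqrt{\rho\vol(G)}\sum_{v\in S}\mathbf{x}(v)$, so the right-hand side of the identity becomes exactly $\rho(\sum_{v\in S}\mathbf{x}(v))^2$, precisely cancelling the leading spectral correction and leaving only the $\epsilon\,\vol(S)$ error from the spectral remainder.

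The main obstacle is nonnegativity: the explicit formula can leave $[0,d_v]$ at the \emph{exceptional} set $E=\{v\colon \phi_1(v)^2>d_v/(\rho\vol(G))\}$. The normalization $\sum_v\mathbf{x}(v)^2/d_v=1$ forces the total degree on $E$ to satisfy $\sum_{v\in E}d_v<\rho\vol(G)$. I would truncate $d'_v$ into $[0,d_v]$ on $E$ and rebalance $\vol(G')$ by the resulting deficit; the perturbation induced in each of $\vol'(S)$ and $\vol''(S)$ can be controlled by a Cauchy--Schwarz application against $\sum_{v\in S\cap E}\mathbf{x}(v)^2/d_v$, which together with the spectral remainder yields the total discrepancy bound of $2\epsilon\,\vol(S)$. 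The technically delicate step is the precise quantitative accounting of the truncation perturbation, converting the pointwise failure of the ansatz into an additive error absorbed by the remaining $\epsilon\,\vol(S)$ budget.
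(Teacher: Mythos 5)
Your overall strategy is the same as the paper's: approximate $M=D^{-1/2}AD^{-1/2}$ by $\phi_0\phi_0^*+\rho\,\phi_1\phi_1^*$, rewrite this rank-$2$ operator as a sum of two rank-$1$ matrices of the form $D'JD'/\vol(G')+D''JD''/\vol(G'')$ (your $\alpha=1/2$ ansatz is exactly the paper's $f_1,f_2$ construction specialized to $\alpha=1/2$, and your algebraic identity is the paper's Claim A), and then truncate to restore nonnegativity on an exceptional set. However, there is a genuine gap in your treatment of the truncation error, and it is precisely the hard part of the theorem. On the exceptional set one only knows $|d'_v|\le\tfrac12\sqrt{\rho\vol(G)}\,|\mathbf{x}(v)|$, so your Cauchy--Schwarz step against $\sum_{v\in S\cap E}\mathbf{x}(v)^2/d_v\le 1$ bounds the perturbation of $\vol'(S)$ only by $O\bigl(\sqrt{\rho\,\vol(G)\,\vol(S)}\bigr)$; feeding this into $\vol'(S)^2/\vol(G')$ produces an error that is not $O(\epsilon\,\vol(S))$ (for $\vol(S)$ a constant fraction of $\vol(G)$ and $\rho$ a constant it is of order $\vol(G)$). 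The a priori information from the normalization of $\phi_1$ only gives a deficit of order $\rho\,\vol(G)$, with no $\epsilon$ anywhere, so the truncation cannot be absorbed into the remaining budget by these means alone.

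The paper closes this gap with an argument you do not have: it exploits the entrywise nonnegativity of $A$ via $0\le\chi_X^*A\chi_Y=\chi_X^*D^{1/2}MD^{1/2}\chi_Y$, where $X=\{f_1<0\}$ and $Y=\{f_2<0\}$ are disjoint and the sign pattern $\chi_X^*f_1<0$, $\chi_Y^*f_1>0$, $\chi_X^*f_2>0$, $\chi_Y^*f_2<0$ forces the rank-$2$ contribution on $X\times Y$ to be negative; combined with the spectral bound $\epsilon\sqrt{\vol(X)\vol(Y)}$ and an AM--GM estimate this yields $|f_1^*\chi_X|=|f_2^*\chi_Y|\le\tfrac{\epsilon}{4}\vol(G)$, i.e.\ the total truncation deficit is genuinely of order $\epsilon\,\vol(G)$, not merely $\rho\,\vol(G)$. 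Note also that the equality $|f_1^*\chi_X|=|f_2^*\chi_Y|$ is not automatic: the paper keeps $\alpha$ free and chooses it so that the two deficits balance, which is what makes $\sum_v d'_v=\alpha\vol(G)$ hold exactly after truncation and keeps the denominators in your identity under control. By fixing $\alpha=1/2$ you forfeit this degree of freedom, and your "rebalance $\vol(G')$ by the resulting deficit" then introduces further error terms whose control again requires the $O(\epsilon\,\vol(G))$ deficit bound. To repair the proof you would need to import both the free parameter $\alpha$ with the balancing condition and the positivity argument on $X\times Y$.
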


\begin{proof}
Let $\phi_i$,  $0 \leq i \leq n-1$,  denote the eigenvectors  of the normalized adjacency matrix of $G$. Let $\phi_0$ and $\phi_1$ denote the eigenfunctions associated with $\rho_0=1$ and $\rho_1 $.

Since $G$ is connected, 
the eigenvector  $\phi_0$ associated with eigenvalue $\rho_0=1$  of $M_G$ can be written as $\phi_0 = D^{1/2} {\mathbf 1}/\sqrt{\vol (G)}$ as seen in
\cite{ch0}. The second largest eigenvalue $\rho_1$ is strictly between $0$ and $1$ because of the connectivity of $G$. Before we proceed to analyze the eigenvector $\phi_1$ associated with $\rho_1$, we consider the following two vectors which depend on a value $\alpha$ to be specified later.
\begin{align}
f_1&= \alpha D {\mathbf 1} - D^{1/2} \phi_1 \sqrt{\rho_1 \alpha(1-\alpha) \vol(G)} \nonumber\\
f_2&= (1-\alpha) D {\mathbf 1}  + D^{1/2} \phi_1 \sqrt{\rho_1 \alpha(1-\alpha) \vol(G)}\label{eq3}
\end{align}
It is easy to verify that $f_1$ and $f_2$ satisfy the following:
\begin{align}
f_1+f_2& = D {\mathbf 1} \label{fsum}\\
{\mathbf 1} &\perp   \left( \frac {f_1}{\alpha}-\frac{f_2}{1-\alpha}\right) \label{perp}\\
\sum_v f_1(v)&=\alpha \vol(G), \nonumber\\
\sum_v f_2(v)&=(1-\alpha ) \vol(G).\nonumber
\end{align}
In particular,  by considering $\langle f_1, D^{-1} f_2 \rangle$, we see that  $\alpha$ satisfies
\begin{equation}
1-\rho_1= \frac{1}{\alpha (1-\alpha)\vol(G)} \sum_v \frac{f_1(v)f_2(v)}{d_v} .\label{frow}
\end{equation}
and we have
\[
\phi_1= \sqrt{\frac{\alpha (1-\alpha)}{\rho_1 \vol(G)}}D^{-1/2} \left( \frac {f_1}{\alpha}-\frac{f_2}{1-\alpha}\right) .
\]
\noindent
{\it Claim A:}
\[ 
\phi_0 \phi^*_0 + \rho_1 \phi_1 \phi^*_1 = \frac{D^{-1/2} f_1  f^*_1 D^{-1/2}}{\alpha \vol(G)} + \frac{D^{-1/2} f_2 f^*_2 D^{-1/2}}{(1-\alpha) \vol(G)}.
\]
{\it Proof of Claim A:}\\
From \eqref{eq3}, we have
\[
\frac{D^{-1/2} f_1  f^*_1 D^{-1/2}}{\alpha \vol(G)} = \alpha  \frac{D^{1/2}J D^{1/2}}{ \vol(G)} + (1-\alpha) \rho_1 \phi_1 \phi^*_1.
\]

Similarly, we have
\[
\frac{D^{-1/2} f_2  f^*_2 D^{-1/2}}{(1-\alpha)\vol(G)}=(1-\alpha)\frac{D^{1/2}J D^{1/2}}{\vol(G)} + {\alpha} \rho_1 \phi_1 \phi^*_1.
\]
Combining the above two equalities, Claim A is proved.

Now, we define two subsets $X$ and $Y$ satisfying
\begin{align*}
X &= \{ x:  f_1(x) < 0\}= {\bigg \{}x: d_x^{1/2} \leq \phi_1(x)\sqrt{\frac{(1-\alpha)\vol(G)}{\alpha}} {\bigg \}} \\
Y &= \{ y:  f_2(y) < 0\}= {\bigg \{}y: d_y^{1/2} <  -  \phi_1(y)\sqrt{\frac{\alpha \vol(G)}{1-\alpha }}{\bigg \}}.
\end{align*}
Clearly $X$ and $Y$ are disjoint.

Note that  when $\alpha$ decreases, the volume of  $X$ decreases and the volume of $Y$ increases. If $\alpha=1$, $X$ consists of all $v$ with $\phi_1(v) \geq 0$ and $Y$ is empty.  For $\alpha=0$,  $Y$ consists of all $u$ with $\phi_1(u) < 0$ and $X$ is empty. We choose  $\alpha$  so that
\begin{equation}
\label{subsum}
\sum_{x \in X} |f_1(x)|=\sum_{y \in Y} |f_2(y)| .
\end{equation}
Here we use the convention that a subset $X'$ of $X$ means that there are values $\gamma_v$ in $\{0,1\}$, associated  each vertex in $X$  with the exception of  one vertex  with a fractional $\gamma_v$ and the size of $X'$ is the sum of all $\gamma_v$s.

Now, for each vertex $v$, we define ${d}'_v$ and ${d}''_v$ as follows:
\begin{equation}
\label{dprime1}
{d}'_v
=\left\{
\begin{array}[c]{ll}
f_1(v) & \mbox{if } v \not \in X \cup Y, \\
0 & \mbox{if } v \in X, \\
d_v & \mbox{if } v \in Y. 
\end{array}
\right.
\end{equation}
Also, we define ${d}''_v=d_v -d'_v$.

\noindent
{\it Claim B:}  
\begin{align*}
\sum_v {d}'_v  &= \alpha \vol(G) \\
\sum_v {d}''_v &= (1-\alpha) \vol(G)
\end{align*}
{\it Proof of Claim B:}
We note that
\begin{align*}
\sum_v {d}'_v -\alpha\vol(G)&=\sum_v {d}'(v) -\sum_v f_1(v)\\
&= \sum_{v \in X \cup Y} (d'_v -f_1(v))\\
&= \sum_{x \in X} |f_1(x)| + \sum_{y \in Y}( d_y - f_1(y))\\
&= \sum_{x \in X} |f_1(x)| + \sum_{y \in Y} f_2(y)\\
&= \sum_{x \in X} |f_1(x)| - \sum_{y \in Y}| f_2(y)|\\
&=0 .
\end{align*}
The second equality can be proved in a similar way that completes the proof of Claim B.

For a subset $S$ of vertices, let $\chi_S$ denote the characteristic function of $S$ defined by $\chi_S(x)=1$ if $x$ in $S$ and $0$ otherwise.  We consider
\begin{align}
0 &\leq \chi^*_X D^{1/2}M D^{1/2} \chi_Y \nonumber\\
&\leq   \chi^*_X  D^{1/2}( \phi_0 \phi^*_0 + \rho_1 \phi_1 \phi^*_1) D^{1/2} \chi_Y+ {\epsilon} \|  D^{1/2} \chi_X\|~ \| D^{1/2}\chi_Y \| \nonumber\\
&= \frac{ \chi^*_X f_1  f^*_1 \chi_Y}{\alpha \vol(G)} + \frac{\chi^*_X f_2  f^*_2 \chi_Y}{(1-\alpha) \vol(G)} +  \epsilon\sqrt{ \vol(X)\vol(Y)} . \label{eq100}
\end{align}
From the definition, we have  $\chi_X^* f_1 < 0$,  $\chi_Y^* f_1 > 0$, $\chi_X^* f_2 > 0$ and $\chi_Y^* f_2 < 0$.  This implies
\begin{align}\label{eq8}
\epsilon \sqrt{ \vol(X)\vol(Y)}&\geq-\frac{ \chi^*_X f_1  f^*_1 \chi_Y}{\alpha\vol(G)} -\frac{\chi^*_X f_2  f^*_2 \chi_Y}{(1-\alpha) \vol(G) }\nonumber \\
&= \left|\frac{ \chi^*_X f_1  f^*_1 \chi_Y}{\alpha \vol(G)} \right|+\left|\frac{\chi^*_X f_2  f^*_2 \chi_Y}{(1-\alpha) \vol(G) }\right|\nonumber \\
&=\frac{ | f_1^* \chi_X| (\vol(Y)- f_2^* \chi_Y)}{\alpha \vol(G)} +\frac{| f_2^* \chi_Y| (\vol(X)- f_1^* \chi_X)}{(1-\alpha) \vol(G) }\nonumber\\
&=\frac{ | f_1^* \chi_X| (\vol(Y)+|f_2^* \chi_Y|)}{\alpha \vol(G)} +\frac{| f_2^* \chi_Y| (\vol(X)+|f_1^* \chi_X|)}{(1-\alpha) \vol(G) }\nonumber\\
&\geq\frac{| f_1^* \chi_X|}{\vol(G)} \big( \frac{\vol(Y)}{\alpha}+\frac{\vol(X)}{1-\alpha} \big)
\end{align}
by using \eqref{eq3} and \eqref{fsum}. Now,  we have
\begin{align}\label{eq9}
\frac{\vol(Y)}{\alpha}+\frac{\vol(X)}{1-\alpha}&=\alpha\bigg(\frac{\sqrt{\vol(Y)}}{\alpha}\bigg)^2+(1-\alpha)\bigg(\frac{\sqrt{\vol(X)}}{1-\alpha}\bigg)^2\nonumber\\
&\geq \big(\sqrt{\vol(X)} + \sqrt{\vol(Y)}\big)^2\nonumber\\
&\geq 4 \sqrt{\vol(X)\vol(Y)}
\end{align}
by using the Cauchy-Schwarz inequality.
Combining \eqref{eq8} and \eqref{eq9}, we have
\begin{equation}
|f_1^* \chi_X|=|f_2^* \chi_Y| \leq \frac \epsilon 4\vol(G). \label{eqb}
\end{equation}

Now we consider
\[
R = A- \frac{D'J D'}{\sum_v d'_v} - \frac{D'' J D''}{\sum_v d''_v}.
\]
Then,  for $f = \chi_S$, the characteristic function of the subset $S$, we have
\begin{align*}
\langle f, Rf \rangle &= f^*D^{1/2}M D^{1/2}f - \frac{f^*D'J D'f}{\sum_v d'_v} - \frac{f^*D'' J D''f}{\sum_v d''_v}   \nonumber\\
&\leq f^*   D^{1/2}( \phi_0 \phi^*_0 + \rho_1 \phi_1 \phi^*_1) D^{1/2}f \\
&~~~- \frac{f^*D'J D'f}{\sum_v d'_v} - \frac{f^*D'' J D''f}{\sum_v d''_v} + 2 {\epsilon} \|  D^{1/2}f\|^2 \nonumber\\
&\leq  \frac{ f^*f_1  f^*_1f}{\alpha \vol(G)} + \frac{f^* f_2  f^*_2 f}{(1-\alpha) \vol(G)}  - \frac{f^*D'J D'f}{\sum_v d'_v} - \frac{f^*D'' J D''f}{\sum_v d''_v} + 2 {\epsilon} \vol(S)\\
&\leq  \frac{ (f^*f_1)^2- (f^* {\mathbf d}')^2 }{\alpha \vol(G)} + \frac{(f^* f_2)^2-(  f^*{\mathbf d}'')^2}{(1-\alpha) \vol(G) } + 2{\epsilon} \vol(S).
\end{align*}
where ${\mathbf d}'$ and ${\mathbf d}''$ are the degree vectors with entries $d'_v$ and $d''_v$, respectively.  

Since $f = \chi_S$, we have
\begin{eqnarray*}
\frac{ (f^*f_1)^2- (f^* {\mathbf d}')^2 }{\alpha \vol(G)}
&\leq&
\frac{ 2\sum_{v \in S \cap X} |f_1(v)| \vol'(S) + \sum_{v \in S \cap X} |f_1(v)|^2  }{\alpha \vol(G)}\\
&\leq& 3 \epsilon \vol(S) 
\end{eqnarray*}

Similar inequalities hold for $f_2$ and ${\mathbf d}''$.  Thus, we have
\[
\langle f, Rf \rangle \leq 8 \epsilon \vol(S)
\]
The proof of Theorem~\ref{q13} is complete.
\end{proof}

\begin{theorem}
\label{q131}
Suppose ${\mathcal G}=(\Omega,\Delta)$ is a graphlets and $I-\Delta$ has two nontrivial eigenvalues $1$ and $\rho$ with $0 < \rho < 1$.
Then there is a value $\alpha \in [0,1]$ such that 
\\
(i) $\Omega =\Omega_1 \cup \Omega_2$ where $\mu(\Omega_1)=\alpha$ and $\mu(\Omega_2)=1-\alpha$,\\
(ii) $\Omega_i$ has  a measure $\mu_i$ satisfying
\begin{eqnarray*}
\mu_1(x)&=&\mu(x)+\sqrt{\frac {\alpha \rho}{1-\alpha}}\mu(x)\varphi_1(x),\\
\mu_2(x)&=&\mu(x)-\sqrt{\frac {(1-\alpha) \rho}{\alpha}}\mu(x)\varphi_1(x),
\end{eqnarray*} 
where  $\varphi_1$  is the   eigenvector, with $\|\varphi_1\|_{\mu}=1$,  associated with
$\rho$.
\end{theorem}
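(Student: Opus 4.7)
The plan is to derive the formulas for $\mu_1$ and $\mu_2$ by matching two representations of the bilinear form $\langle f,(I-\Delta)g\rangle_\mu$: the spectral decomposition forced by the rank-$2$ hypothesis, and the union-of-two-quasi-random-graphlets representation that Theorem~\ref{t31}(iii) furnishes under the same hypothesis. Since $I-\Delta$ has nontrivial eigenvalues $1$ on $\mathbf{1}$ and $\rho$ on the $\mu$-orthonormal eigenfunction $\varphi_1$, the spectral theorem gives
\begin{align*}
\langle f, (I-\Delta) g\rangle_\mu = \langle f,\mathbf{1}\rangle_\mu \langle g,\mathbf{1}\rangle_\mu + \rho\,\langle f,\varphi_1\rangle_\mu \langle g,\varphi_1\rangle_\mu
\end{align*}
for all integrable $f,g$. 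On the other hand, Theorem~\ref{t31} guarantees the existence of some $\alpha\in[0,1]$ and probability densities $\mu_1,\mu_2$ with $\mu=\alpha\mu_1+(1-\alpha)\mu_2$ such that
\begin{align*}
\langle f, (I-\Delta) g\rangle_\mu = \alpha \Bigl(\textstyle\int f\,\mu_1\Bigr)\Bigl(\textstyle\int g\,\mu_1\Bigr) + (1-\alpha) \Bigl(\textstyle\int f\,\mu_2\Bigr)\Bigl(\textstyle\int g\,\mu_2\Bigr).
\end{align*}

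Write $h_i=\mu_i/\mu$ (Radon--Nikodym derivative) so that $\int f\,\mu_i=\langle f,h_i\rangle_\mu$, and note $\langle h_i,\mathbf{1}\rangle_\mu=\int\mu_i=1$. Testing the equality of the two bilinear forms against any $f$ orthogonal to $\mathrm{span}\{\mathbf{1},\varphi_1\}$ and varying $g$ over $h_1,h_2$ yields a $2\times 2$ linear system forcing $\langle f,h_i\rangle_\mu=0$; hence $h_1,h_2\in\mathrm{span}\{\mathbf{1},\varphi_1\}$. Writing $h_i=\mathbf{1}+b_i\varphi_1$ and matching the coefficients of the three independent bilinear components $\langle f,\mathbf{1}\rangle_\mu\langle g,\mathbf{1}\rangle_\mu$, $\langle f,\mathbf{1}\rangle_\mu\langle g,\varphi_1\rangle_\mu+\langle f,\varphi_1\rangle_\mu\langle g,\mathbf{1}\rangle_\mu$, and $\langle f,\varphi_1\rangle_\mu\langle g,\varphi_1\rangle_\mu$ gives
\begin{align*}
\alpha+(1-\alpha)=1,\qquad \alpha b_1+(1-\alpha) b_2=0,\qquad \alpha b_1^2+(1-\alpha) b_2^2=\rho.
\end{align*}
Solving, with the sign of $\varphi_1$ fixed appropriately, yields $b_1=\sqrt{\rho\alpha/(1-\alpha)}$ and $b_2=-\sqrt{\rho(1-\alpha)/\alpha}$, which recovers exactly the stated formulas for $\mu_1=\mu(1+b_1\varphi_1)$ and $\mu_2=\mu(1+b_2\varphi_1)$.

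It remains to produce the partition $\Omega=\Omega_1\cup\Omega_2$ with $\mu(\Omega_1)=\alpha$. Following the balancing idea used in the discrete proof of Theorem~\ref{q13}, I take $\Omega_1(\alpha)=\{x:\mu_1(x;\alpha)\ge 0\}$ and $\Omega_2(\alpha)=\{x:\mu_2(x;\alpha)\ge 0\}$; both $\mu(\Omega_1(\alpha))$ and $\mu(\Omega_2(\alpha))$ depend continuously and monotonically on $\alpha\in(0,1)$ and sweep through $[0,1]$ at the endpoints, so the intermediate value theorem produces a value $\alpha^{*}$ with $\mu(\Omega_1(\alpha^{*}))=\alpha^{*}$ (and hence $\mu(\Omega_2(\alpha^{*}))=1-\alpha^{*}$), and I set $\Omega_i=\Omega_i(\alpha^{*})$.

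The main obstacle I expect is verifying the pointwise non-negativity of $\mu_1,\mu_2$ at the chosen $\alpha^{*}$. In the discrete proof (Theorem~\ref{q13}) this is circumvented by an ad hoc redefinition of $d'_v,d''_v$ contributing an $O(\epsilon)$ error term; in the exact rank-$2$ continuous setting, the required bound comes instead from the inherited positivity of the limit kernel $K(x,y)=\mu(x)\mu(y)\bigl(1+\rho\,\varphi_1(x)\varphi_1(y)\bigr)\ge 0$ (itself a consequence of $W_n\ge 0$ for every $G_n$ in the sequence), which forces $\varphi_1(x)\varphi_1(y)\ge -1/\rho$ almost everywhere and therefore $\|\varphi_1\|_\infty\le 1/\sqrt{\rho}$. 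This pointwise bound is precisely what is needed to conclude $1+b_i\varphi_1\ge 0$ $\mu$-a.e.\ at the chosen $\alpha^{*}$, completing the proof.
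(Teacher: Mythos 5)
Your proposal has a genuine gap, in two places.

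First, a circularity. You obtain the existence of $\alpha$ and of the densities $\mu_1,\mu_2$ by invoking Theorem \ref{t31}(iii). But in the paper the implication (i)$\Rightarrow$(ii),(iii) of Theorem \ref{t31} is itself deduced from Theorems \ref{q13} and \ref{q131}: Theorem \ref{q131} is an ingredient of Theorem \ref{t31}, not a consequence of it. Your coefficient-matching step is a perfectly good \emph{uniqueness} argument --- it shows that \emph{if} a decomposition $\mu=\alpha\mu_1+(1-\alpha)\mu_2$ with the product bilinear form exists, then $\mu_i/\mu$ must lie in ${\rm span}\{\mathbf 1,\varphi_1\}$ with the stated coefficients --- but it cannot supply existence, and existence (nonnegativity of the two signed densities for a suitable $\alpha$) is the entire content of the theorem. (Minor but symptomatic: the pair $b_1=\sqrt{\rho\alpha/(1-\alpha)}$, $b_2=-\sqrt{\rho(1-\alpha)/\alpha}$ does not satisfy the system $\alpha b_1+(1-\alpha)b_2=0$, $\alpha b_1^2+(1-\alpha)b_2^2=\rho$ that you wrote down; that system forces $b_1=\sqrt{\rho(1-\alpha)/\alpha}$ and $b_2=-\sqrt{\rho\alpha/(1-\alpha)}$.)

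Second, the nonnegativity argument at the end does not close the gap. From $1+\rho\,\varphi_1(x)\varphi_1(y)\ge 0$ a.e.\ you may conclude $\bigl(\mathop{\rm ess\,sup}\varphi_1^{+}\bigr)\cdot\bigl(\mathop{\rm ess\,sup}\varphi_1^{-}\bigr)\le 1/\rho$, where $\varphi_1^{\pm}$ are the positive and negative parts; this does \emph{not} give $\|\varphi_1\|_\infty\le 1/\sqrt{\rho}$, since the two one-sided suprema need not be equal, and $\|\varphi_1\|_\infty\le 1/\sqrt{\rho}$ is exactly what would be required for $1+b_i\varphi_1\ge 0$ at $\alpha=1/2$. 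For general $\alpha$ one needs $\varphi_1\ge -1/b_1$ and $\varphi_1\le -1/b_2$ with $b_1b_2=-\rho$, which the product bound does make achievable --- but only if $\alpha$ is calibrated so that $-1/b_2$ dominates $\mathop{\rm ess\,sup}\varphi_1^{+}$. Your $\alpha^{*}$ is instead produced by an intermediate-value argument on $\mu(\{\mu_1(\cdot;\alpha)\ge 0\})$ that is not tied to this requirement (and which always admits the degenerate solution $\alpha=1$, where $b_1\to 0$). The paper's route is different: it works at the finite level inside the proof of Theorem \ref{q13}, defining $f_1,f_2$ from $\phi_1$ as in (\ref{eq3}), fixing $\alpha$ by the balancing condition (\ref{subsum}), and using the discrepancy estimate (\ref{eq100})--(\ref{eqb}) to show the negative parts carry only $O(\epsilon)$ mass; Theorem \ref{q131} is then obtained by letting $\epsilon\to 0$ in that argument together with Lemma \ref{mxx}. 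To make your limit-space proof self-contained you would have to drop the appeal to Theorem \ref{t31}, choose $\alpha$ directly from $\mathop{\rm ess\,sup}\varphi_1^{\pm}$, and deduce nonnegativity from the product bound.
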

 The proof of Theorem \ref{q131} follows from the proof in Theorem \ref{q13} and Lemma \ref{mxx}.  Thus, we have $(i) \Leftrightarrow (ii)$.
 
\noindent
{\it Proof of Theorem \ref{t31}:}
We note that in the statement of Theorem \ref{t31}, the implications
$(ii) \Leftrightarrow (iv) \Leftrightarrow (v)$  follow from the definitions and Lemma \ref{mxx}.
It suffices to prove 
 $(i) \Leftrightarrow (ii) $ and $(iii) \Leftrightarrow (iv)$. 

The implication $(ii) \Rightarrow (i)$ is proved in Theorem \ref{tt1},  and Theorems \ref{q13} and \ref{q131} implies $(i) \Rightarrow (ii)$.

To see that $(iii) \Leftrightarrow (iv)$, we note that
if in  a graph $G_n$ in the graph sequence, 
the degree sequence $d_x$ can be written as $d_v=d'_v+d''_v$ for all $v \in V(G_n)$ where $d'_v, d''_v \geq 0$,  then by
defining $\mu^{(n)}_1(v) = d'_v/\sum_v d'_v$,  $\mu^{(n)}_2(v) = d''_v/\sum_v d''_v$ and $\alpha= \sum_vd'_v /\sum_v d_v$, we have
$\mu_n= \mu^{(n)}_1+\mu^{(n)}_2$. Furthermore, we can use the fact that 
\begin{align*}
&\int_x f(x)(I-\Delta_n)g(x) \mu_n(x)&= &~~\frac{1}{\vol(G_n)}\langle f, (I-\Delta_n)g \rangle_{\mu_n} \\
\text{and}~& ~~~~~~~~~~~~\langle f, {\mathbf 1}\rangle_{\mu^{(n)}_1}  \langle g, {\mathbf 1}\rangle_{\mu^{(n)}_1} &=&~~\sum_{u \in V_n} f(u) \mu^{(n)}_1(u) \sum_{v \in V_n} g(v) \mu^{(n)}_2(v) \\
&&=& ~~\frac{fD'_nJD''_ng}{\vol(G_n)^2} .\\
\end{align*}
The equivalence of $(iii)$ and $(iv)$ follows from substitutions using the above two equations and applying Theorem       \ref{t2}.
Theorem \ref{t31} is proved. \qed

For graphlets of rank $2$, there can be  a negative eigenvalue  $-\rho$ of $I - \Delta$ in addition to the eigenvalue $1$. For example,
 bipartite quasi-random graphlets have eigenvalues $1$ and $-1$ for $I-\Delta$.  In general, can
such graphlets  be characterized as the union of a quasi-random graphlet and a bipartite quasi-random graphlets?
To this question, the answer is negative.  It is not hard to construct examples of  a graphlets  having three nontrivial eigenvalues  which
is the union of a quasi-random graphlets and a bipartite quasi-random graphlets. With additional restrictions on degree distributions
and edge density, the three eigenvalues can collapse into two eigenvalues. It is possible to apply similar methods as in the proof of 
Theorem \ref{q13} to derive the necessary and sufficient conditions for such cases but we will not delve into the details here.
\section{Graphlets of rank $k$}
\label{rk}

In this section, we examine graphlets of rank $k$ for some given positive integer $k$. It would be desirable to derive some
general characterizations for graphlets of rank $k$, for example, similar to Theorem \ref{t31}.  However, for $k \geq 3$, the situation is more complicated. Some of the methods for the case of $k=2$ can be extended but some techniques in the proof of Theorem \ref{t31} do not. Here we state a few useful facts about graphlets of rank $k$ and leave some discussion in the last section.

\ignore{
\begin{theorem}
\label{t55}
The following statements  are equivalent for a graph sequence $G_n, n=1,2, \dots$ where all $G_j$'s are connected:

\noindent
(i) $G_n, n=1,2, \dots$ converges to a graphlets $(\Omega, \Delta) $ and  $I-\Delta$  has $k$ nontrivial eigenvalue $ \rho_j \geq 0$, for $j=0, 1, \ldots, k-1$.

\noindent
(ii) $G_n, n=1,2, \dots$ converges to a measure space $\Omega$ and
 $\Omega=\Omega_1 \cup \Omega_2 \cup \ldots \Omega_k$ where $\Omega_j$ has rank $1$ for $j=1,2, \ldots, k$.
 
  \noindent
(iii) $G_n, n=1,2, \dots $ converge to a measure space $ \Omega$ with measure $\mu $ where $\mu = \sum_{i=1}^k\alpha_i \mu_i $ for some $\alpha_i > 0, \sum_i \alpha_i=1 $ and   the Laplace operator $\Delta$ on $\Omega$ satisfies
\begin{align*}
&\int_{x} f(x)(I-\Delta)g(x) \mu(x)\\
&=
\alpha_i \int_{x} f(x) \mu_1(x) \int_{x} g(x) \mu_1(x) + \ldots
+ \alpha_k \int_{x} f(x) \mu_k(x) \int_{x } g(x) \mu_k(x) 
\end{align*}
for any $f, g: \Omega \rightarrow {\mathbb R}$.
 
\noindent
(iii) There exists a sequence  $\epsilon_n$ which approaches $0$ as $n$ goes to infinity such that
the Laplace operator $\Delta_n$ of $G_n$ in a graph sequence satisfies   the property  $\Delta_n$ has a $k$  positive eigenvalue
$\rho_{n,j}<1$ with $|\rho_{n,j} -\rho_j| < \epsilon_n$ and all other eigenvalues have absolute values less than $\epsilon_n$.

\noindent
(iv)
There exists a sequence  $\epsilon_n$ which approaches $0$ as $n$ goes to infinity such that
the degree sequence $(d_v)_{v \in V}$ of $G_n$ can be decomposed  as $d_v = \sum_{j=1}^kd_v^{(j)} $ with $d_v^{(j)} \geq 0$. 
The adjacency matrix $A_n$ of $G_n$ satisfies:
\begin{eqnarray*}
\| D_n^{-1/2}\big(A_n-\sum_{j=1}^{k}\frac{D_n^{(j)}JD_n^{(j)}}{\vol^{(j)}(G_n)} \big)D_n^{-1/2} \|
 \leq \epsilon_n 
 \end{eqnarray*}
 where $D_n^{(j)}$ denotes the diagonal degree matrix with  $(v,v)$th entry $d_n^{(j)}(v)$ and $\vol^{(j)}_n(G_n) =\sum_v d_v^{(j)}$.

\noindent
(v) There exists a sequence  $\epsilon_n$ which approaches $0$ as $n$ goes to infinity such that
the degree sequence $(d_v)_{v \in V}$ of $G_n$ can be decomposed  as $d_v = \sum_{j=1}^k d_v^{(j)}$ with $d'_v \geq 0$ and $d''_v \geq 0$.  Furthermore,
  for all $S, T \subseteq V_n$
\begin{eqnarray*}
\left| E_n(S,T) - \sum_{j=1}^k \frac{\vol^{(j)}(S)\vol^{(j)}(T)}{\vol^{(j)}(G_n)}
\right| \leq \epsilon_n \sqrt{ \vol(S)\vol(T)}.
\end{eqnarray*}
\end{theorem}

}

\ignore{

\begin{theorem}
\label{q12}
Suppose that the normalized adjacency matrix of  a graph $G=(V,G)$ has two nontrivial positive values and all other eigenvalues are smaller than $\epsilon$.  Also, we assume that the minimum degree in $G$ is at least $c\log n$. Then,  the degree sequence $(d_v)$ of  $G$ can be partitioned  such that $d_v = d'_v + d''_v$, with $d'_v, d''_v \geq 0$ such that $G$ is the union of  two edge-disjoint subgraphs $G'$ and $G''$ of $G$  each of which is  quasi-random satisfying $P(12 \sqrt{\epsilon})$ as in (\ref{qr}).
\end{theorem}

\begin{proof}
We will use and follow the notation in the proof of Theorem \ref{q13}. The degrees $d'_v$ and $d''_v$ are chosen by defining $f_1, f_2$ in \eqref{eq3} and in \eqref{fsum} as in the proof of Theorem~\ref{q13}.

We will construct $G'$ and $G''$ as follows:
For each (unordered) pair of vertices $u$ and $v$, we assign $\{u,v\}$ to $G'$ with probability
\[ 
p(u,v)=\frac{\frac{d'_ud'_v}{ \sum_w d'_w}}{ \frac{d'_ud'_v}{\sum_w d'_w}+\frac{d''_ud''_v}{\sum_w d''_w}}, 
\]
and assigned $\{u,v\}$ to $G''$ otherwise. In other words, the adjacency matrix $A'$ of $G'$ has entries as random indicator variables $X(u,v)$ for $\{u,v\} \in E$ such that $X(u,v) =1$ with probability $p(u,v)$ and $0$ otherwise. We define $X(u,v)$ if $\{u,v\}$ is not an edge in $G$.
 
From Claim A in the proof of Theorem \ref{q13}, we have
\begin{equation}
\label{AA1}
A = \frac{D'JD' }{ \vol'(G)} + \frac{D''JD''}{\vol''(G)} +R 
\end{equation}
where $|\langle f, Rf \rangle |\leq 2\epsilon \langle f, D f \rangle$ or, equivalently, $\| D^{-1/2} R D^{-1/2} \| \leq  2 \epsilon$.

The adjacency matrix $A'$ of $G'$ has entries $A'(u,v)=A(u,v)X(u,v)$. The expected value $\E(A'(u,v))$ of $A'(u,v)$ is 
\begin{eqnarray}
\label{ea}
\E(A'(u,v))= A(u,v)p(u,v).
\end{eqnarray}
Let $\bar A'$ denote the matrix with entries $\E(A'(u,v))$ and $\bar M'$ denote the matrix $D'^{-1/2}\bar A' D'^{-1/2} $. The matrices $\bar A''$ and $\bar M''$ are constructed in a similar way for $G''$.  It remains to show the following two claims:

\noindent
{\it Claim (i)}:  With high probability, the normalized adjacency matrix $M'$ of $G'$  has eigenvalues close to those of $\bar M'$.  The same holds for $G''$.

\noindent
{\it Claim (ii)}:  $\bar M'$ has eigenvalues less than $20 \epsilon^{1/4}$ except for one eigenvalue $1$. The same is true for $\bar M''$.

The proof of Claim (i) follows immediately from the matrix concentration inequalities and the resulting bounds on eigenvalues of
a positive matrix and the expected matrix having entries of expected values  using the condition that minimum degrees are bounded below
by $c \log n$ in graphs on $n$ vertices \cite{mcon}.

Before we proceed to prove (ii), we note that the volume $\vol'(G)$, which is  the sum of $X(u,v)$, satisfies
\begin{align*}
\E(\vol(G')) &= \sum_{u,v} \E( A'(u,v))\\
&= \sum_w d'_w + \sum_{u,v} R(u,v)p(u,v).
\end{align*}
where
\[
\bigg|\sum_{u,v} R(u,v)\bigg|\leq {\epsilon} \|D^{1/2} {\mathbf 1}\|^2  \leq {\epsilon} \vol(G).
\]
This also implies that 
\[
\big| \E(\vol(G'')) - \sum_w d''_w \big| \leq  {\epsilon} \vol(G). 
\]

By using the Chernoff inequalities (see \cite{ chern}), we have
\begin{equation}
\label{cher}
\mbox{Prob} \left( |\vol(G') - \E(\vol(G'))|  \geq c \sqrt {\E(\vol(G'))} \right)  < e^{-{c^2}/{3}}.
\end{equation}

To prove Claim (i),  we consider 
\[
\bar A'(u,v)= \frac{  \frac{d'_u d'_v}{\vol' G}}{ \frac{d'_u d'_v}{\vol' G}+ \frac{d''_u d''_v}{\vol''G}} A(u,v)
\]
 
From \eqref{ea}, we have
\begin{align*}
\bar A'(u,v)&= \frac{ \frac{d'_u d'_v}{\vol' G}}{ \frac{d'_u d'_v}{\vol' G}+ \frac{d''_u d''_v}{\vol''G}} \bigg(\frac{D'JD' }{ \vol'(G)} + \frac{D''JD''}{\vol''(G)} +R \bigg)(u,v)\\
&=\frac{d'_u d'_v}{\vol' G}+ \frac{ \frac{d'_u d'_v}{\vol' G}}{ \frac{d'_u d'_v}{\vol' G}+ \frac{d''_u d''_v}{\vol''G}}R(u,v)\\
&=\bigg(\frac{D'(J+\bar R )D'}{\vol'(G)}\bigg)(u,v)
\end{align*}
where
\begin{equation}
\label{AA2}
\bar R(u,v)=\frac{R(u,v)}{\frac{d'_u d'_v}{\vol' G}+ \frac{d''_u d''_v}{\vol''G}}.
\end{equation}
To prove (i), it is enough to show that for any  subset $S $ of vertices, we have
\begin{eqnarray}
\label{rr}
\left|\langle \chi_S, \frac{D'\bar R D'}{\vol'(G)} \chi_S\rangle \right| \leq c \epsilon \vol'(S).
\end{eqnarray}

From \eqref{AA1}, we have
\begin{align}
\label{hy}
2   \epsilon\vol(S) &\geq  |\langle \chi_S,  R \chi_S\rangle | \nonumber\\
&\geq \left|\langle \chi_S, \frac{D'\bar R D'}{\vol'(G)} \chi_S\rangle \right|
\end{align}

To simplify the rest of the proof for Claim (ii), we will write $\vol(G')=\alpha \vol(G)$ and  $\vol(G'')=(1-\alpha) \vol(G)$ with the understanding that the subsequent error terms, expressed as $\epsilon_i$ can still be bounded using \eqref{cher} within a constant factor.

Now, we can write
\begin{align*}
\bigg(\frac{d'_u d'_v }{\vol' G}+ \frac{d''_u d''_v}{\vol''G}\bigg) \vol(G)&=(d_u'+d''_u)(d_v'+d_v'') \\
&~~~+ \alpha \bar \alpha \big( \frac{d'_u}{ \alpha}-\frac{d''_u}{1-\alpha} \big)\big( \frac{d'_v}{\alpha}
-\frac{d''_v}{1-\alpha} \big)
\end{align*}
Therefore
\begin{align*}
R &= \frac {D \bar R D}{\vol(G)} + \frac{\alpha (1- \alpha) \big(\frac{D'}{\alpha}-\frac{D''}{1-\alpha }\big)\bar R \big(\frac{D'}{ \alpha }-\frac{D''}{1-\alpha}\big)}{ \vol(G)} \\
&= D^{1/2} F D^{1/2} + D^{1/2}\bar D F \bar D D^{1/2},
\end{align*}
where
\begin{equation}
\label{barD}
\bar D = \sqrt{\alpha (1- \alpha)} \big( \frac{D'}{ 
\alpha }-\frac{D''}{1-\alpha} \big)D^{-1}.
\end{equation}
Since $2\epsilon\geq  \| D^{-1/2} R D^{-1/2} \| $, we have
\begin{equation}
\big|\|F\| - \| \bar D F \bar D \|\big| \leq 2 \epsilon,
~~~ \mbox{and}~~~
\big| \|\bar D F \bar D \| - \| \bar D^2 F \bar D^2 \| \big| \leq 4 \epsilon. \label{eq20}
\end{equation}

Let  $\theta$ denote a unit (row) eigenvector of $F$ with eigenvalue $\kappa=\|F\|$. Without loss of generality, we also assume that  $\eta$ is positive. We have  the following:
\begin{align}
2 \epsilon &\geq |\kappa +\theta \bar D F \bar D\theta^* |\nonumber \\
&\geq \kappa + \eta (\theta \bar D \theta^*)^2  -\left|\theta' \bar D F \bar D\theta'^* \right|  \nonumber\\ 
&\geq \eta (\theta \bar D \theta^*)^2, \label{barD1}
\end{align}
 where $\theta'=\theta \bar D \mathcal{P}_1 \bar D^{-1}$ where $\mathcal{P}_1$ is the projection $I - \theta^* \theta$. Here $\bar D^{-1}$ denotes the pseudo inverse of $\bar D$. That is, it is the inverse for  the space orthogonal to the zero space of $\bar D$. From  \eqref{barD} and \eqref{barD1}, we have
\begin{align*}
| \theta D' D^{-1}\theta^* - \alpha| &\leq \sqrt{\frac{2 \epsilon \bar \alpha }{\kappa \alpha}}\\
\mbox{and~~~} | \theta D'' D^{-1}\theta^* -(1-\alpha)| &\leq \sqrt{\frac{2 \epsilon \alpha }{\kappa \bar \alpha}}.
\end{align*}
From \eqref{barD1}, we have 
\begin{equation} \label{barD2}
\kappa -4 \epsilon \leq |\theta \bar D F \bar D\theta^*| \leq \kappa \|\theta \bar D\|.
\end{equation}
On the other hand, we have from \eqref{eq20}
\begin{align}
4 \epsilon + \kappa &\geq \|\bar D^2 F \bar D^2 \| \nonumber\\
&\geq \theta \bar D^2 F \bar D^2 \theta^* \nonumber\\
&\geq \kappa (\theta \bar D^2 \theta)^2 + |\theta' \bar D^2 F \bar D^2\theta''| \nonumber\\
&\geq \kappa (\theta \bar D^2 \theta)^2 \label{barD3}
\end{align}
where $\theta''=\theta \mathcal{P}_2 F (\bar D^2)^{-1}$ where $\mathcal{P}_2$ is the projection $\theta^*\theta$.  Combining \eqref{barD2} and \eqref{barD3}, we have
\begin{align*}
\frac{4 \epsilon}{\kappa} + \sqrt{\frac{8 \epsilon}{\kappa}} &\geq |1- \theta \bar D^2 \theta^* | \\
&= \left| 1 -  \frac {1-\alpha } \alpha+ 2 \frac{\theta D''D^{-1}\theta}{\alpha} - \frac{\theta D''^2D^{-2} \theta^*}{1-\alpha \bar \alpha} \right|.
\end{align*}
Therefore we have
\begin{align}
|\theta D'^2D^{-2} \theta^*-\alpha| &\leq  \epsilon_1 \nonumber\\
|\theta (D''D^{-1} -D''^2D^{-2}) \theta^*| &\leq \epsilon_2 \label{d'd''}
\end{align}
where $\epsilon_1 =\frac{4 \epsilon \alpha \bar \alpha}{\kappa} + 4\sqrt{\frac{2 \epsilon \alpha \bar \alpha}{\kappa}}  $ and $\epsilon_2=\epsilon_1+\sqrt{\frac{6 \epsilon  \bar \alpha}{\kappa \alpha}}$.  We define 
\[ 
T=\{u~:~ d'_u \geq d_u (1- \sqrt{\epsilon_1}) ~\mbox{or}~ d'_u \leq \sqrt{\epsilon_2} d_u \}. 
\]
From \eqref{d'd''}, we have
\begin{align*}
\sqrt{\epsilon_2} \sum_{u \not \in T} (\theta(u))^2 & \leq \sum_{u \not \in T} |d''_u-{d''}_u^2| (\theta(u))^2 \\
&\leq \epsilon_2.
\end{align*}
Note that $T= T_1 \cup T_2$ where $T_1=\{u~:~ d'_u \geq d_u (1- \sqrt{\epsilon_2})\}$.  Let $\mathcal{P}_3$ denote the projection as a diagonal matrix with $\mathcal{P}(u,u)=1$ if $u \in T_1$ and $0$ otherwise. We have
\begin{align*}
| \theta \mathcal{P}_3 F  \mathcal{P}_3 \theta - \theta D' D^{-1} F D^{-1}D' \theta | &\leq \epsilon_3,\\
\mbox{and}~~~~~| \theta \mathcal{P}_3 D''D^{-1} F  D^{-1}D'' \mathcal{P}_3\theta  | &\leq \epsilon_3,
\end{align*}
where $\epsilon_3 = 2 \sqrt{\epsilon_2}$.  This implies
\begin{align*}
2 \epsilon &\geq  \left| \frac{ \theta  \mathcal{P}_3 D'D^{-1} F D^{-1}D'  \mathcal{P}_3 \theta   }{ \alpha}+ \frac{\theta  \mathcal{P}_3 D''D^{-1} F D^{-1}D'' \mathcal{P}_3 \theta  }{1- \alpha} \right| \\
&\geq \frac{| \theta  \mathcal{P}_3 D'D^{-1} F D^{-1}D'  \mathcal{P}_3 \theta  | }{ \alpha} -\frac{\epsilon_3}{ 1-\alpha}\\
&\geq \frac{| \theta  \mathcal{P}_3  F  \mathcal{P}_3 \theta  | }{\alpha}- \frac{2\epsilon_3}{1- \alpha} \\
&\geq\frac{| \theta D'D^{-1} F D^{-1}D'  \theta  | }{ \alpha}- \frac{3\epsilon_3}{1-\alpha}
\end{align*}

Hence, for $\epsilon_4=2\epsilon+ \frac{2\epsilon_3}{1- \alpha}$ we have
\begin{align*}
\epsilon_4 &\geq\frac{| \theta  \mathcal{P}_3 F \mathcal{P}_3 \theta  | }{ \alpha}\\
&\geq\frac{| \theta  D'^{1/2}D^{-1/2}  F  D^{-1/2}D' \mathcal{P}_3 \theta  | }{\alpha}.
\end{align*}
This complete the proof of (\ref{rr}) and therefore
 Claim (ii) is proved. The proof of Theorem \ref{q12} is complete.
 \end{proof} 
}

\begin{lemma}
\label{ma12}
Suppose $D$ is the diagonal degree matrix of a graph $G$.
Suppose that  for all $v$ in $V$, $d_v =\sum_{i=1}^k d_i(v)$, for $d_i(v)\geq 0$, $1 \leq i \leq k$. Let
$D_i$  denote the diagonal matrices with diagonal entries $D_i(v,v)=d_i(v)$. Then the matrix $X$ defined by
\[
X= D^{-1/2}\left(\sum_{i=1}^k\frac{D_iJD_i}{\vol_i(G)}    \right) D^{-1/2}
\]
has $k$ nonzero eigenvalues $\eta_i$  where $\eta_i$ are eigenvalues of a $k \times k$ matrix $M$ defined by
\[
M(i,j)=\sum_v \frac{d_i(v) d_j(v) }{d_v}.
\]
Furthermore, the eigenvector $\xi_i$ for $X$ which is associated with eigenvalue $\eta_i$ can be written as
\[
\xi_i (v)= \sum_{j=1}^k \psi_i(j) \frac{d_j(v)d_v^{-1/2}}{\vol_j(G)}
\]
where $\psi_i$ are eigenvectors of $M$ associated with eigenvalues $\eta_i$.
\end{lemma}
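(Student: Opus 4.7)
The plan is to recognize $X$ as a sum of $k$ rank-one matrices and then invoke the standard identity that $UU^{*}$ and $U^{*}U$ share their nonzero spectrum, thereby reducing a computation on a space of dimension $|V|$ to a $k \times k$ eigenproblem. Because this is essentially a linear-algebra observation, I would not expect any conceptual obstacle; the main care goes into the bookkeeping of the $\vol_i(G)$ normalizations so that the reduced matrix matches the $M$ in the statement.

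First, I would write $D_i J D_i = (D_i \mathbf{1})(D_i \mathbf{1})^{*}$ and absorb the outer $D^{-1/2}$ factors, so that introducing the vectors $w_i \in \mathbb{R}^{|V|}$ with $w_i(v) = d_i(v)\, d_v^{-1/2}/\sqrt{\vol_i(G)}$ one obtains
\[
X \;=\; \sum_{i=1}^{k} w_i w_i^{*} \;=\; W W^{*},
\]
where $W$ is the $|V| \times k$ matrix whose $i$-th column is $w_i$. This factorization immediately gives $\mathrm{rank}(X) \leq k$, so $X$ has at most $k$ nonzero eigenvalues.

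Next, I would identify the nonzero spectrum of $X = WW^{*}$ with that of the $k \times k$ Gram matrix $W^{*}W$, using the standard observation that these two matrices share nonzero eigenvalues with multiplicity. Computing the entries gives
\[
(W^{*}W)(i,j) \;=\; \langle w_i, w_j \rangle \;=\; \sum_{v} \frac{d_i(v)\, d_j(v)}{d_v \sqrt{\vol_i(G)\,\vol_j(G)}},
\]
which coincides with $M(i,j)$ once the factor $\sqrt{\vol_i(G)\vol_j(G)}$ is pushed into a rescaling of the eigenvector components. This is the only place normalization bookkeeping matters, and it is consistent with the $1/\vol_j(G)$ factors appearing in the prescribed form of $\xi_i$.

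Finally, for the eigenvectors, if $\psi_i$ is an eigenvector of $W^{*}W$ with eigenvalue $\eta_i$, then setting $\xi_i := W\psi_i$ we have $X\xi_i = W(W^{*}W)\psi_i = \eta_i W\psi_i = \eta_i \xi_i$, and expanding componentwise,
\[
\xi_i(v) \;=\; \sum_{j=1}^{k} \psi_i(j)\, \frac{d_j(v)\, d_v^{-1/2}}{\sqrt{\vol_j(G)}},
\]
which agrees with the form asserted in the lemma after the same rescaling convention used to pass from $W^{*}W$ to $M$. As a sanity check, specializing to $k=2$ should reproduce the single nontrivial eigenvalue $\eta$ and eigenvector $\xi$ computed in Lemma~\ref{mxx}, which I would verify directly to confirm the normalization.
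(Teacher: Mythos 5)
Your route is genuinely different from the paper's: the paper simply asserts that one checks $X\xi_i=\eta_i\xi_i$ by direct substitution, given $M\psi_i=\eta_i\psi_i$, whereas you factor $X=WW^{*}$ with $w_i=D^{-1/2}D_i\mathbf{1}/\sqrt{\vol_i(G)}$ and pass to the Gram matrix $W^{*}W$. That is a cleaner and more conceptual argument, and it gives the rank bound for free. The factorization itself and the eigenvector transfer $\xi_i=W\psi_i$ are correct.

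However, the step where you claim the Gram matrix ``coincides with $M(i,j)$ once the factor $\sqrt{\vol_i(G)\vol_j(G)}$ is pushed into a rescaling of the eigenvector components'' is a genuine gap, not bookkeeping. Writing $V=\mathrm{diag}(\vol_1(G),\dots,\vol_k(G))$, you have computed $W^{*}W=V^{-1/2}MV^{-1/2}$, which is similar to $V^{-1}M$ but \emph{not} to $M$: any rescaling of eigenvector components is conjugation by a diagonal matrix, which preserves eigenvalues, so no rescaling can turn the spectrum of $V^{-1/2}MV^{-1/2}$ into the spectrum of $M$. The nonzero eigenvalues of $X$ are therefore the eigenvalues of $MV^{-1}$, not of $M$ as defined in the statement. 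Your own proposed sanity check against Lemma~\ref{mxx} exposes this: there the nontrivial eigenvalue is $\eta=1-\bigl(\sum_v d'_vd''_v/d_v\bigr)\,\vol(G)/\bigl(\vol(G')\vol(G'')\bigr)$, and the volume factors can only come from the normalized matrix (indeed $\mathrm{tr}(MV^{-1})=2-\bigl(\sum_v d'_vd''_v/d_v\bigr)\vol(G)/\bigl(\vol(G')\vol(G'')\bigr)$, giving eigenvalues $1$ and $\eta$, whereas $M$ itself has the wrong trace). To be fair, the statement of Lemma~\ref{ma12} is internally inconsistent on exactly this point --- the eigenvector formula carries the $1/\vol_j(G)$ factors while the definition of $M$ does not --- and a careful version of your argument actually repairs it: the $\eta_i$ are the eigenvalues of $MV^{-1}$ (equivalently of $V^{-1/2}MV^{-1/2}$), the $\psi_i$ in the $\xi_i$ formula must be eigenvectors of $MV^{-1}$, and then $\xi_i=WV^{-1/2}\psi_i$ is an eigenvector of $X$. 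You should carry out that normalization explicitly rather than waving it into a rescaling.
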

\proof
The proof of Lemma \ref{ma12} is by straightforward verification. 
Under the assumption that $\varphi_j M = \eta_j \varphi_i$ for $1 \leq i \leq k$, it suffices to check
that 
$\xi_i X = \eta_i \xi$ for $\xi_i$.  The proof is done by direct substitution and  will be omitted.
\qed

\begin{theorem}
\label{t13}
If a graphlets ${\mathcal G}(\Omega, \Delta)$ is the union of $k$ quasi-random graphlets, then
the Laplace operator $\Delta$ satisfies the property that $I-\Delta$ has $k $ nontrivial positive eigenvalues.
\end{theorem}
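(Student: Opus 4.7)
The plan is to combine Lemma \ref{ma12}, which handles the finite-dimensional case, with the quasi-random characterization of Theorem \ref{t2} and the spectral convergence guaranteed by Theorem \ref{lapconv}. Pick a graph sequence $G_n = G_n^{(1)} \cup \cdots \cup G_n^{(k)}$ converging to $\mathcal{G}(\Omega,\Delta)$, where each $G_n^{(i)}$ is quasi-random with degree function $d_n^{(i)}(v)$ and $d_n(v) = \sum_i d_n^{(i)}(v)$. Theorem \ref{t2}(iv) applied to each component $G_n^{(i)}$ gives
\[
\bigl\|(D_n^{(i)})^{-1/2}\bigl(A_n^{(i)} - \tfrac{D_n^{(i)} J D_n^{(i)}}{\vol(G_n^{(i)})}\bigr)(D_n^{(i)})^{-1/2}\bigr\| = o(1),
\]
and since $D_n^{(i)} \leq D_n$ entrywise, the same $o(1)$ bound holds after replacing $(D_n^{(i)})^{-1/2}$ by $D_n^{-1/2}$. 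Summing over $i$,
\[
\bigl\|D_n^{-1/2}\bigl(A_n - \textstyle\sum_i \tfrac{D_n^{(i)} J D_n^{(i)}}{\vol(G_n^{(i)})}\bigr)D_n^{-1/2}\bigr\| = o(1).
\]

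Next, Lemma \ref{ma12} identifies the $k$ nonzero eigenvalues of the rank-$k$ operator $X_n := D_n^{-1/2}\sum_i D_n^{(i)} J D_n^{(i)}/\vol(G_n^{(i)})\,D_n^{-1/2}$ with the eigenvalues of the $k \times k$ symmetric matrix $M_n$ whose entries are $M_n(i,j) = \sum_v d_n^{(i)}(v)d_n^{(j)}(v)/d_n(v)$. By Weyl's perturbation inequality the spectrum of the normalized adjacency matrix $D_n^{-1/2}A_nD_n^{-1/2}$ (which equals the spectrum of $I-\Delta_n$) coincides with that of $X_n$ up to $o(1)$, so all but $k$ of its eigenvalues tend to zero while the remaining $k$ track those of $M_n$; passing to the limit via Theorem \ref{lapconv} then shows that $I-\Delta$ has exactly $k$ nontrivial eigenvalues, equal to the limits of the eigenvalues of $M_n$.

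Rewriting $M_n(i,j)$ using the component degree distributions $\mu_n^{(i)}(v) = d_n^{(i)}(v)/\vol(G_n^{(i)})$ and weights $\alpha_n^{(i)} = \vol(G_n^{(i)})/\vol(G_n)$, one checks that $M_n$ is the Gram matrix of the functions $h_n^{(i)} = \sqrt{\alpha_n^{(i)}}\,\mu_n^{(i)}/\sqrt{\mu_n}$. Its limit $M_\infty$ is therefore also a Gram matrix, namely of $h_i = \sqrt{\alpha_i}\,\mu_i/\sqrt{\mu}$ in $L^2(\Omega)$, and is positive semidefinite; hence $I-\Delta$ has $k$ nonnegative eigenvalues. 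The assumption that the decomposition is into $k$ genuinely distinct quasi-random graphlets forces the $\mu_i$, and thus the $h_i$, to be linearly independent, which upgrades $M_\infty$ to strictly positive definite and all $k$ eigenvalues to strict positivity. Incidentally $M_\infty$ always has $(\sqrt{\alpha_1},\ldots,\sqrt{\alpha_k})$ as an eigenvector with eigenvalue $1$, consistent with the trivial Perron eigenvalue at the top of the spectrum.

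The main obstacle will be the normalization bookkeeping: converting the per-component quasi-random bound into a global one sandwiched by $D_n^{-1/2}$, and recognizing the combinatorial matrix $M_n$ as a Gram matrix of the $h_n^{(i)}$ so that its limit has the clean $L^2$ form above. The final step --- upgrading nonnegativity to strict positivity --- must be stated carefully as part of what ``union of $k$ quasi-random graphlets'' is taken to mean, since degenerate decompositions would produce Gram matrices of rank strictly less than $k$.
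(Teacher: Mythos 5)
Your proposal follows the paper's route: the paper's entire proof of Theorem~\ref{t13} is the single sentence that it ``follows immediately from Lemma~\ref{ma12},'' and you correctly supply the steps it leaves implicit --- transferring the per-component quasi-random bound through $D_n^{-1/2}$ (valid since $\|D_n^{-1/2}(D_n^{(i)})^{1/2}\|\le 1$), perturbing the spectrum, and passing to the limit. Two of your additions are genuinely worth keeping: your Gram-matrix form $M(i,j)=\sum_v d_i(v)d_j(v)/(d_v\sqrt{\vol_i(G)\vol_j(G)})$ is the correct normalization (the paper's statement of Lemma~\ref{ma12} omits the $\sqrt{\vol_i\vol_j}$ factor, as one can check against the $k=2$ case in Lemma~\ref{mxx}), and your point that strict positivity of all $k$ eigenvalues requires the $\mu_i$ to be linearly independent identifies a real gap in the paper's one-line argument, which only yields positive semidefiniteness and rank at most $k$.
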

The proof of Theorem \ref{t13} follows immediately  from Lemma \ref{ma12}.

Several questions follow the above theorem. If $I-\Delta$ has $k$ eigenvalues that are not necessarily positive,
is it possible  to find a decomposition into   a number of  quasi-random graphlets or bipartite quasi-random
graphlets? Under what additional conditions can such decompositions exist? If they exist, are they
  unique? Numerous additional questions can be asked here.

\ignore{

Let  $\varphi_i$  denote  the orthonormal eigenvector  (under $\mu$-norm) associated with
$\rho_i $.  
From (\ref{aa}), it is enough if we can choose  $g_{i} $ satisfing, for $x,y \in \Omega$,
\begin{eqnarray*}
\mu(x) \big(I-\Delta\big)(x,y)&=& \sum_{i=1}^kg_i(x)  g_i (y).
\end{eqnarray*}
and $g_i (x) \geq 0$ for all all $x \in \Omega \setminus S_i$  for a subset $S_i$  with $\mu(S_i)=0$.

We will need to choose $b_{i,j}\geq 0$, where $0 \leq i,j \leq k-1$, so that
\begin{eqnarray*}
g_i(x)&=&\mu(x)\sum_{j=0}^{k-1} b_{i,j} \varphi_j(x).
\end{eqnarray*}  
In particular, for eigenfunction $\phi_0$ associated with eigenvalue $0$ of $\Delta$, we have
  \begin{eqnarray*}
  \int_{x \in \Omega} g_i(x) = b_{1,j} \int_{x \in \Omega} \mu(x) \varphi_0(x) = \sqrt{\mu(\Omega_j)}.
  \end{eqnarray*}
  It suffices to show that for $S_i = \{ x ~:~ g_i(x) < 0\}$, we have $\mu(\cup_{i=1}^k S_i)=0$.
  To simplify the notation, we write $\mu(\Omega_i)=\alpha_i$.We  set 
  \begin{eqnarray*}
 \mu_i(x) = \begin{cases}\frac{g_i(x) }{\sqrt{\alpha_i} }&\text{if $x \not \in S_i$}\\
0& \text{ otherwise.}
\end{cases}
\end{eqnarray*}

  The proof of Theorem \ref{q131} follows from the following two claims:
  
  \noindent
  {\it Claim A}. If $b_{i,j}$'s  satisfy the following conditions, then $\mu(\cup_{i=1}^k S_i)=0$.
   \begin{align*}
(1)~~~~&\sqrt{\alpha_i\alpha_j} \sum_{s=1}^kb_{i,s}b_{j,s} = \begin{cases} \rho_i &\text{if $i=j$}\\
0&\text{otherwise.}
\end{cases}\\
(2)~~~~& g_1(S_1)=\sum_{i \not = 1} g_i(S_2 \setminus S_1), \text{and}~ \frac{g_2(S_1)}{\alpha_2}=\ldots=\frac{g_k( S_1)}{\alpha_k}.\\
(3)~~~~&
g_2(S_2\setminus S_1)=\sum_{i \not = 2} g_i(S_3 \setminus S_1 \setminus S_2),  \text{and}~ \frac{g_3(S_2\setminus S_1)}{\alpha_3}=\ldots=\frac{g_k( S_2\setminus
S_1)}{\alpha_k}.\\
(4)~~~~&\text{For $1 \leq i \leq k-1$},
g_i\big(S_i \setminus (\cup_{j <i} S_j)=\sum_{j > i} g_j\big(S_{i+1} \setminus (\cup_{j \leq i} S_j)\big), \\
& \text{and}~ \frac{g_{i+1} \big(S_i\setminus (\cup_{j< i} S_j)\big)}{\alpha_{i+1}}=\ldots=\frac{g_k \big(S_i\setminus (\cup_{j<i} S_j)\big)}
{\alpha_k}.
\end{align*}

 \noindent
  {\it Claim B}. There exist  $b_{i,j}$'s  satisfy the above  conditions.
  
  Claim A can be verified similar to the proof in Theorem \ref{q13}. The verification is tedious but straightforward
  we will sketch the proof here. To simplify the description, all the $\epsilon$"s involved will be viewed as the limiting value $0$.
    The key part of proof for Theorem \ref{q13} lies in (\ref{subsum}) and (\ref{eq100}). Instead of $X,Y$ in  (\ref{eq100}), we set
  $X_1=S_1, Y_1=S_2 \setminus S_1 $.  As in the proof at (\ref{subsum}) and (\ref{eq100}),
  we then derive $ g_1(X_1)=\sum_{j>1} g_j(Y_1)=0$. This leads to $\mu(X_1)=0$ (if we use the definition that $S_i = \{ x ~:~ g_i(x) < -\epsilon\}$ for some small fixed $\epsilon$).  For general $i$, we set $X_i=S_i \setminus (\cup_{j <i} S_j)$
  and $Y_i=S_{i+1} \setminus \big(\cup_{j \leq i} S_j \big)$, leading to $g_i(X_i)=\sum_{j>i}g_i(Y_i)=0$.  
  Note that $\cup_{i=1}^k S_i=\cup_{i=1}^{k-1} X_i \cup Y_{k-1}$ and therefore we have $\mu(\cup_{i=1}^k S_i)=0$.
  
  To prove Claim B, we note that 
   there are $\binom {k+1} 2$ constraints in (1) and $\binom k 2$ contraints in (2)-(4). Together, there are $k^2$ constraints. In the other
   direction,
   there  are $k^2$ variables $b_{i,j}$'s. 
\qed

The above theorem leads to the following conjecture which will be examined in a subsequent paper.
\begin{theorem}
\label{t14}
The following statements  are equivalent for a graph sequence $G_n, n=1,2, \dots$ where all $G_j$'s are connected:

\noindent
(i) $G_n, n=1,2, \dots$ converges to a graphlets $(\Omega, \Delta)$ and 
 $I-\Delta$  has $k$ nontrivial  eigenvalues including $a$ positive values and $b$ negative values.

\noindent
(ii) $G_n, n=1,2, \dots$ converges to a measure space $\Omega$ and
 $\Omega=\Omega_1 \cup \ldots \cup \Omega_k$ where $k=a+b$, and for $i \leq a$,  $\Omega_i$,   is a quasi-random graphlets  while, for  $j > a$,
 $\Omega_j$  is a quasi-random bipartite graphlets.
 \end{theorem}
}
\ignore{
\section{Graph  embeddings preseving the Cheeger ratio using the heat kernel }
\label{cheeger}
In a graphlets $\Omega$,  each $x$ is associated with its heat kernel function $H_{t,x}$ which can be viewed as an embedding of $\Omega$ to $[0,1]^{[0,1]}$.
The functions $H_{t,x}$ has many strong properties, in particular, in preserving the Cheeger ratios.

For  a funciton  $f~:~\Omega \rightarrow {\mathbb R}$ and  real value $r \in [0,1]$, the {\it segment}  $S_{r}$  of $f$ denotes a subset  of $\Omega$
with $\mu(S_r)=r$ so that $f(x)/\mu(x) \geq f(y)/\mu(y)$ for any $x \in S_{r}$ and $y \not \in S_{r}$.  In other words, if we arrange elements  $x$ of $\Omega$ in the 
decreasing order of $f(x)/\mu(x)$ and select  elements having large values to form a set $S_{r}$  with  $\mu(S_{r})=r$.   For a given $s$, we consider
 the least Cheeger
ratio $h(S_{r})$ over all  segments $S_r$, $r \leq s$, denoted by $\kappa_{r,f}$, called  the $r$-local Cheeger
ratio determined by  $f$.

We will  examine the  local Cheeger ratio determined by   the heat kernel     functions. The idea of the proof is similar to the proof of the Cheeger inequality in \cite{ch0}. A weaker  discrete version of (\ref{echee1}) was given
in \cite{hk1} with a different proof.
\begin{theorem}
\label{tchee1}
For a subset $S \subset \Omega $ with the Cheeger ratio $h(S)=h$ and $\mu(S)=s \leq 1/4$, there is a subset $T$ of $S$ with $\mu(S) \geq s/2$ such that for $x
\in T$,  the heat kernel   $H_{2t,x}$ satisfies:
\begin{eqnarray}
\label{echee1}
-   \frac { \frac{\partial}{\partial t} H_{2t,x}(x)}{H_{2t,x}(x)} \geq 
\frac {(1-\delta)} 2 \kappa_{t,x,2s}^2 
\end{eqnarray}
provided $\delta = 1-e^{-4th} \leq 1/4$ where $\kappa_{t,x,2s} $ is the($2s$)-local Cheeger ratio  determined by $H_{t,x}$.
\end{theorem}

\proof
For $x$ in $T$, we define 
\[  f_t(y)=\frac{H_t(x,y)}{\mu(y)}. \]

\noindent
{\it Claim A:}
For $\epsilon=1-e^{-2th}$,  the heat kernel  rank function  $f_t$ satisfies the following:
\begin{align}
(a)~~~~~~~~~~~~&-\frac{\partial}{\partial t} f_{2t} (x) = 2 \int_{\Omega} |\nabla f_t(e)|^2 \mu_e, \label{m4a}\\
(b)~~~~~~~~~~~~&~~~~~~~f_{2t}(x) = \int_{\Omega} f_t^2(y) \mu(y), \label{m4b}\\
(c)~~~~~~~~~~~~&\text{For $r \geq s$,} ~~\int_{ S_{r}} f_t(y)\mu(y)  = 1-\epsilon, \label{m4c} \\
(d)~~~~~~~~~~~~&\text{For $r \geq 2s$,}~~\inf_{S_{r}} f_t(y)\mu(y) \leq \frac{2\epsilon}{r}.\label{m4d}
\end{align}

\noindent
{\it Proof of the Claim:}\\
($a$) and ($b$) are consequences of Lemma \ref{m1} (ii) and (iii).
Theorem \ref{t333} implies ($c$). Now consider $S_r$ with $r \geq 2s$ and we have
\begin{eqnarray*}
\inf_{y \in S_{r}}f_t(y) \leq \frac 2 {r } \int_{ S_{r}\setminus S_{r/2}}f_t(y)\mu(y)  \leq 
\frac{2\epsilon}{r} .
\end{eqnarray*}
The Claim is proved.

Using the above claim, we have the following.
\begin{eqnarray}
 \frac {-\frac{\partial}{\partial t} f_{2t,x}(x)}{f_{2t,x} (x)}
&=& \frac{2 \int_\Omega |\nabla f_t(e)|^2 \mu_e}{\int_\Omega f_t^2(y) \mu(y)}.\label{nn}
\end{eqnarray}
For a graph $G$, we can define the operator $\bar \nabla $ for $u,v$ with $A(u,v) \not = 0$, by
\[ \bar \nabla f(u,v)= f(u) + f(v)= 2 f(u)- \nabla f(u,v). \]
The operator $\bar \nabla$ can be extended to the graph limit $\Omega$, satisfying:
\begin{eqnarray}
 |\nabla f(e) \bar \nabla f(e)|&= &|\nabla f^2(e)|,\label{nn1}\\
 \int_\Omega |\bar \nabla f(e)|^2 \mu_e &=& 2 \int_\Omega f^2(y) \mu(y)- \int_\Omega | \nabla f(e)|^2 \mu_e  \nonumber\\
& \leq& 2 \int_\Omega f^2(y) \mu(y). \label{nn2}
 \end{eqnarray}
Returning to (\ref{nn}) and using (\ref{nn1}) and (\ref{nn2}), we have
\begin{eqnarray*}
 \frac {-\frac{\partial}{\partial t} f_{2t}(x)}{f_{2t} (x)} &=&  \frac{2 \int_\Omega |\nabla f_t(e)|^2 \mu_e\int_\Omega |\bar \nabla f_t(e)|^2 \mu_e
}{\int_\Omega f_t^2(y) \mu(y)\int_\Omega |\bar \nabla f_t(e)|^2 \mu_e}\\
&\geq&\left(\frac{\int_\Omega |\nabla f_t^2(e)| \mu_e
}{\int_\Omega f_t^2(y) \mu(y)}\right)^2\\
&\geq&\left(\frac{\int_0^{2s}dr\int_{\partial(S_r)} |\nabla f_t^2(e)| \mu_e
}{\int_\Omega f_t^2(y) \mu(y)}\right)^2\\
&=&\left(\frac{\int_0^{2s}\mu(\partial(S_r))~ d f_t^2(r)
}{\int_\Omega f_t^2(y) \mu(y)}\right)^2\\
&\geq&\left(\frac{\int_0^{2s}\kappa_{t,x,2s} \mu(S_r)~  d f_t^2(r)
}{\int_\Omega f_t^2(y) \mu(y)}\right)^2\\
&\geq&\kappa_{t,x,2s}^2 \left(\frac{\int_0^{2s}f_t^2(r)~ d \mu(S_r) 
}{\int_\Omega f_t^2(x) \mu(x)}\right)^2\\
&=&\kappa_{t,x,2s}^2 \left(\frac{\int_{S_{2s}} \big(f_t^2(y) -f_t^2(r)) \mu(y) 
}{\int_\Omega f_t^2(y) \mu(y)}\right)^2\\
\end{eqnarray*}
We now use (\ref{m4c}) and (\ref{m4d})  to obtain:
\begin{eqnarray*}
f^2(2s) 2s &\leq& \frac{2 \epsilon^2} s\\
&\leq& \frac{2 \epsilon^2} {(1-\epsilon)^2 s}\bigg( \int_{S_{2s}} f_t(y) \mu(y) \bigg)^2\\
&\leq& \frac{4 \epsilon^2} {(1-\epsilon)^2 } \int_{S_{2s}} f_t^2(y) \mu(y) 
\end{eqnarray*}
by using H\"{o}lder's inequality.
Therefore we have
\begin{eqnarray*}
 \frac {-\frac{\partial}{\partial t} f_{2t}(x)}{f_{2t} (x)}
 &\geq&\kappa_{t,x,2s}^2 \left(\frac{\int_{S_{2s}} f_t^2(y)\mu(y) -f_t^2(2s) 2s
}{\int_\Omega f_t^2(y) \mu(y)}\right)^2\\
&\geq &((1-\epsilon)^2-4\epsilon^2) \kappa_{t,x,2s}^2\\
&\geq &(1-\delta) \kappa_{t,x,2s}^2
\end{eqnarray*}
 by our assumption on $\delta$.
Theorem \ref{tchee1} is proved.
\qed

\begin{theorem}
\label{tchee2}
For a subset $S$ with $\mu(S)=s$ and $h(S)=h$, there is a subset $T$ of $S$  with $\mu(T) \geq s/2$ so that
for   $\delta = 1-e^{-4th} \leq 1/4$, the following holds:
\begin{align*}
(i) &~ \text{For $x \in T$},~~
 e^{-2th} \leq \int_{S} H_{t,x}(y)  \leq \sqrt{\frac {s}{\mu(x)} }e^{-{t\kappa_{t,x,2s}^2(1-\delta)}/2}\\
 &\text{where where $\kappa=\kappa_{t,x,2s} $ denotes the minimum $2s-$local Cheeger ratio  using }\\
 &\text{ segments $S_r$, $r \leq 2s$, determined by $H_{t,x}$.}
\\
(ii) &~~
\frac 1 2  e^{-4th} \leq \int_T \frac{\mu(x)}s \Big( \int_{S_s} H_{t,x}(y)\Big)^2 \leq \frac{1}{\mu(T)} \int_T H_{2t,x}(x) \mu(x)  \leq e^{-t\kappa_{t,2s}^2(1-\delta)}\\
 &\text{where where $\kappa=\kappa_{t,2s} $ denotes the minimum $2s-$local Cheeger ratio  using }\\
 &\text{ segments $S_r$, $r \leq 2s$, determined by $H_{t,x}$ over all $x$ in $T$.}
 \end{align*}
\end{theorem}

We note that in ($i$), there is a term $\mu(S)/\mu(x)$ which contributes a term of  $O(\log n)$ when we compare $h$ and $\kappa_{t,x,2s}$ by taking logarithms
of both sides of the inequality. This works for the discrete cases of finite graphs but is rather restrictive for the general case.
In order to get rid of this untidy factor, we consider the expected case in ($ii$) by integrating over all $x$ in $T$.

\noindent
{\it Proof of Theorem \ref{tchee2}:}\\
By solving the inequalities in (\ref{echee1}), we have
\begin{eqnarray}\label{chee3}
\hkr_{2t,x}(x) \leq   e^{-(1-\delta)t \kappa_{t,x,2s}^2}.
\end{eqnarray}

To prove the upper bound, we use  Lemma \ref{m1} ($iii$):
\begin{eqnarray*}
H_{2t,x}(x)&\geq&  \mu(x) \int_\Omega \frac{H_{t,x}(y)^2}{\mu(y)}dy\\
&\geq&\mu(x) \int_{ S_s}\big(H_{t,x}(y)/\mu(y)\big)^2\mu(y)dy\\
&\geq& \frac{\mu(x)} s \bigg( \int_{ S_s}H_{t,x}(y)dy\bigg)^2
\end{eqnarray*}

Together with (\ref{chee3}),  we have
\begin{eqnarray*}
 e^{-t\kappa_{t,x,s}^2(1-\delta)}& \geq& H_{2t,x}(x) \geq 
\frac{\mu(x)} s\Big( \int_{S_s} H_{t,x}(y) \Big)^2 \\
&\geq& \frac{ \mu(x)e^{-4th}}{ s} 
\end{eqnarray*}
where $\epsilon$ is as defined  in the proof of Theorem \ref{tchee1}.
This implies
\[ e^{-2th} \leq \int_{S_s} H_{t,x}(y)dy  \leq \sqrt{\frac {s}{\mu(x)} }e^{-{t\kappa_{t,x,2s}^2(1-\delta)}/2 }\]
and ($i$) is proved.
To prove ($ii$), we consider
\[ F(t)=  \frac{1}{\mu(T)} \int_T H_{2t,x}(x) \mu(x) . \]
By Theorem \ref{tchee1}, we have
\begin{eqnarray*}
-\frac{\partial }{\partial t} \log F(t) &=&
\frac{ -\frac{1}{\mu(T)} \int_T \frac{\partial}{\partial t} H_{2t,x}(x) \mu(x)}{F(t)} \\
 &\geq&\frac{\frac{1}{\mu(T)} \int_T (1-\delta) \kappa^2_{t,x,2s} H_{2t,x}(x) \mu(x)}{F(t)} \\
  &\geq&\frac{\frac{1}{\mu(T)} \int_T (1-\delta) \kappa^2_{t,2s} H_{2t,x}(x) \mu(x)}{F(t)} \\
   &\geq&(1-\delta) \kappa^2_{t,2s}.
\end{eqnarray*}
By solving the inequality with  $F(0) =1$,
we have
\[ F(t) \leq e^{-(1-\delta)t \kappa^2_{t,2s}}. \]
In the other direction, we have
\begin{eqnarray*}
F(t) &\geq&\frac{1}{\mu(T)} \int_T \Big( \frac{H_{2t,x}(x)}{\mu(x)}\Big) \mu(x)^2\\
&=&\frac{1}{\mu(T)} \int_T \mu(x)^2\Big( \int_{\Omega} \frac{H_{t,x}(y)^2}{\mu(y)}\Big) \\
&\geq&\frac{1}{\mu(T)} \int_T \mu(x)^2\Big( \int_{S_s} \frac{H_{t,x}(y)^2}{\mu(y)}\Big) \\
&\geq&\frac{1}{\mu(T)} \int_T \mu(x)^2\frac{1}{s} \Big( \int_{S_s} H_{t,x}(y)\Big)^2 \\
&\geq&\frac{1}{2} \int_T\Big( \frac{\mu(x)}{\mu(T)}\Big)^2 e^{-4th}\\
&\geq&\frac{e^{-4th}}{2}\Big( \int_T \frac{\mu(x)}{\mu(T)}\Big)^2 \\
&\geq&\frac{e^{-4th}}{2}
\end{eqnarray*}
as desired.
\qed

We remark that Theorems \ref{tchee1} and \ref{tchee2}  provide an algorithm for finding a local cut together with a performance guarantee within a quadratic factor of the optimum.
The above inequality has a similar flavor as the Cheeger inequality  which is the base for the  spectral partitioning algorithm using eigenfunctions. 
The disadvantage of using   eigenfunctions is the partition algorithm is not local in the sense that one can not specify the size of the parts to be separated. Here by the appopriate
choice of $t$, we can control the support of the heat kernel functions.
}

\section{Concluding remarks}\label{remarks}
In this paper, we have merely scatched the surface of the study of graphlets. Numerous questions remain, some of which we mention here.

\noindent
(1) In this paper, we mainly study quasi-random graphlets and graphlets of finite rank (which are basically `sums' of quasi-random graphlets).
It will be quite essential to understand other families of graph sequences, such as the graph sequences of paths, cycles, trees, grids, planar graphs, etc. In this paper, we define the spectral distance between two graphs as the spectral norm of the  `difference' of the associated Laplacians. 
In a subsequent paper, we consider a generalized version of spectral distance for considering large families of graphlets.

\noindent
(2) We here use $[0,1]$ as the labels for the graphlets and the measure $\mu$ of the graphlets depends on the Lebesgue measure on $[0,1]$.
To fully understand the geometry of graphlets derived from general graph sequences, it seems essential to consider general measurable spaces as labeling spaces.
For example, for graph sequences $C_n \times C_n$, it works better to use $[0,1]\times[0,1]$ as the labeling space, instead.

\noindent
(3) In this paper we relate 
the  spectral distance to the previously studied cut-distance by showing the
equivalence of the two distance measures for graph sequences of any degree distribution. It will be of interest to find  and to relate to other distances.
For example,  will  some nontrivial subgraph count measures be implied by the spectral distance (see the questions and remarks mentioned in Section \ref{quasi})?

\noindent
(4)
In the study of complex graphs  motivated by numerous real-world networks, random graphs are often utilized for analyzing various  models  of  networks. Instead of using the classical Erd\H{o}s-R\'enyi model, for which graphs have the same expected degree for every vertex, the graphs  under consideration usually have prescribed degree distributions, such as a power law degree distribution. For example, for a given expected degree sequence ${\mathbf w}=(d_v)$, for $v \in V$, a random graph $G({\mathbf w})$ has edges between $u$ and $v$ with probability $p d_ud_v$, for some scaling constant (see \cite{cl}).  Such random graphs are basically quasi-random of positive rank one.  Nevertheless,   realistic networks often are clustered or have uneven distributions.  A natural problem of interest is to identify the clusters or `local communities'.  The study of graphlets of  rank two or higher can be regarded as extensions of the previous models. Indeed,  the geometry of the graphlets  can be used to illustrate the limiting behavior of large complex networks. 
In the other direction, network properties that are ubiquitous in many examples of real-world graphs can be a rich source for new directions in graphlets. 

\noindent
(5)
Although we consider undirected graphs here, some of these questions can be extended to directed graphs.  In this paper, we focus on the spectral distance of graphs but  for directed graphs the spectral gaps can be exponentially small and any diffusion process on directed graphs can have very different behavior.
The treatment for directed graphs will need to  take these considerations into account.
Many questions remains.

\end{document}